\newcommand{\thm}[2]{\begin{#1} #2 \end{#1}}
\newcommand{\tr}{\mathrm{tr\:}}
\newcommand{\btheta}{\boldsymbol{\theta}}
\numberwithin{equation}{section}
\newtheorem{theorem}{Theorem}[section]
\newtheorem{lemma}[theorem]{Lemma}
\newtheorem{corollary}[theorem]{Corollary}
\newtheorem{conjecture}[theorem]{Conjecture}
\newtheorem{definition}[theorem]{Definition}
\theoremstyle{remark}
\newtheorem{observation}[theorem]{Observation}
\newtheorem{remark}[theorem]{Remark}
\newtheorem{question}[theorem]{Question}
\newcommand{\integers}{{\bf Z}}
\newcommand{\reals}{{\bf R}}
\newcommand{\calw}{\mathcal{W}}
\newcommand{\SL}{\mathrm{SL\:}}
\begin{document}


\title{Growth in free groups (and other stories)--twelve years later}

\author{Igor Rivin}
\dedicatory{To Paul Schupp, with the greatest affection}
\address{Mathematics Department, Temple University, Philadelphia, PA 19122}
\address{School of Mathematics, Institute for Advanced Study, Princeton NJ 08540}
\email{rivin@temple.edu}
\email{rivin@ias.edu}
\date{\today}

\keywords{graphs, groups, growth function, homology, Chebyshev polynomials,
asymptotics, limiting distributions, perturbation theory, compact
groups, geodesic flow, Markov chains}

\subjclass{Primary 05C25, 05C20, 05C38, 60J10, 60F05, 42A05; Secondary
22E27}

\begin{abstract}
We start by studying the distribution of (cyclically
reduced) elements of the free groups 
$F_n$ with respect to their abelianization (or equivalently, their
class in $H_1(F_n, \integers)$). We derive an explicit generating function,
and a limiting distribution, by means of certain results (of
independent interest) on Chebyshev polynomials; we also prove that the
reductions $\mod p$ ($p$ -- an arbitrary prime) of these classes are
asymptotically equidistributed, and we study the deviation from
equidistribution. We extend our techniques to a more general setting
and use them to study the statistical properties of long cycles (and
paths) on regular (directed and undirected) graphs.  We return to the
free group to study  some growth functions of the number of conjugacy
classes as a function of their cyclically reduced length.
\end{abstract}
\maketitle

\section*{Introduction -- 2010}
The paper ``Growth in free groups (and other stories)", has been around in preprint form (\cite{RivinWalks1999}) since the late nineties (the arXiv version cited dates to 1999, but this was preceded by a 1997 IHES preprint). Since the paper has had a fair amount of influence (and  parts of it have since become separate papers), it seems a good idea to publish it at last -- this version is not very different from the preprint, except for this introduction, which gives a bit of background on how and why it was written together with a survey (necessarily incomplete and subjective) of  what has happened since the arXiv preprint appeared in 1999.

\subsection*{Why?}  The work described in the paper was initially motivated by the author's (continuing to this day) interest in the counting questions on geodesics on hyperbolic surface, stemming from some conversations with Peter Sarnak in the early 1990s. More precisely, Sarnak had asked about the asymptotics of the number of \emph{simple} geodesics on the punctured torus, where the only result appeared to be the one in the paper of Beardon, Lehner, and Sheingorn \cite{beardonlehnersheingorn}, where the authors had shown that the number of simple geodesics of length bounded by $L$ grew somewhere between quadratically and quartically in $L.$ This did not seem to be very sharp, and indeed, Greg McShane and I improved it to an asymptotic result (with quadratic growth) in a pair of short papers \cite{mc1,mc2}, using purely geometric methods (showing that the length of the \emph{unique} shortest geodesic  (which can be showed to be simple) in a primitive integral homology class extends to a norm on real homology (which is the Gromov, or the stable norm, though at the time McShane and I had no knowledge of the connection). The fact that there is at most one simple closed geodesic in a homology class is specific to the punctured torus, and while other methods can be used to compute the asymptotics of the number of simple closed geodesics of bounded length on a surface of finite type (the order of growth was computed by the author in \cite{geomded}, while asymptotics were computed by Maryam Mirzakhani in \cite{mirzakhcurves} -- see also \cite{rimizrakh}), the following question is still wide open:

\begin{citation}
{\bf Question:} How many simple curves of length bounded by $L$ are there in a fixed homology class $h$ on a hyperbolic surface?
\end{citation}
Mirzakhani's work implies that a constant proportion of all simple geodesics are separating, but for a non-trivial homology class nothing seems known to-date.

\subsection*{Geodesics in homology classes}
Given the interest in geodesics and homology, it was natural to investigate a similar question for \emph{all} closed geodesics, not necessarily simple. It is a well-known result of Huber (for hyperbolic surfaces -- Huber uses the Selberg Trace Formula)  -- \cite{hubergeod1,hubergeod2,hubergeod3} and Margulis \cite{margthes,margthesbook} for arbitrary negatively curved surfaces, using ergodic theory) that the number of closed geodesics of length bounded by $L$ \emph{without} homological restrictions is asymptotic to 
$\exp{hL}/(hL),$ where $h$ is the topological entropy of the geodesic flow ($h=1$ for a hyperbolic surface). The methods used by Huber and Margulis (Selberg Trace Formula and ergodic dynamics, respectively) are the two principal tools used in the vast majority of the paper discussed below (generally either one technique or the other, but not both, generally because the Trace Formula gets sharp results but only works in the constant curvature setting, while dynamical methods are softer, so give weaker results in a wider setting.

The first result on geodescis in homology classes is due to W. Parry and M. Pollicott -- in their paper \cite{ppchebotarev} they show that when the homology group $H_1(S, \integers)$ is \emph{finite}, then closed geodesics are equidistributed among homology classes. Parry and Pollicott use the machinery of thermodynamic formalism and dynamical zeta functions, and their argument mimics the proof of the Chebotarev density theorem. Parry and Pollicott's methods work in variable negative curvature, and they also analyze the lifting of geodesics in a homology class to (finite) Galois covers.  Roughly concurrently, A. Katsuda and T. Sunada showed in \cite{katsudasunada86} that for homology with coefficients in a finite group, every homology class contains an infinite number of closed geodesics (but no estimate of the growth of their number as a function of length).

The next result is due to T.~Adachi and T.~Sunada -- in the paper \cite{AdachiSunada1987} they show that the exponential growth rate of the number curves in any homology class is equal to $h$ (just like for homologically unrestricted geodesics) -- they use Markov partitions as introduced by R. Bowen in \cite{bowensymbolic} and use results on paths in finite graphs to get the result (which is rather weak, since they don't actually get an asymptotic result. They point out that getting such a result (via the usual Tauberian machinery) would require an understanding of the singularity of the $L$-functions involved greater than they could produce at the time.They conjecture that the the number of geodesics of length bounded by $L$ in a homology class should grow like $\exp(h L)/(L^{b+1}),$ where $b$ is the first Betti number of the manifold. 

This conjecture turns out to be false -- in the paper \cite{philsarnhomology}, published almost simultaneously with \cite{AdachiSunada1987}, R. Phillips and P. Sarnak give an asymptotic expansion valid for a \emph{hyperbolic} surface: the number of closed geodesics in a fixed homology class, of length bounded by $L$ grows as
\[
\dfrac{e^L}{L^{g+1}}(1+ c_1/L + c_2/L^2 + \cdots),
\]
where $c_1, \dotsc, c_k, \dotsc$ depend on the homology class. This sort of expansion appeared (at the time) to be possible only because the manifold had \emph{constant} negative curvature. The work of Phillips and Sarnak was extended (again, approximately at the same time) by C. L. Epstein to \emph{cusped} surfaces in \cite{charliegeod}, again using the Selberg Trace Formula. As often with these kinds of extensions, the result is a lot harder technically than the Phillips-Sarnak result.

At roughly the same time, A. Katsuda and T. Sunada extended the dynamical methods of \cite{AdachiSunada1987} first to surfaces of constant negtive curvature in \cite{katsudasunada88} (by observing that the complicated L-function that could not be dealt with in \cite{AdachiSunada1987} became much simpler in constant curvature), and then for general negatively curved surfaces in \cite{katsudasunada90}.

Last, but not least, S. Lalley uses the thermodynamical formalism and some fairly intricate harmonic analysis in \cite{lalleyhomology} to recover the results of Katsuda-Sunada, and more: He shows a central limit theorem for the distribution of homology classes of closed geodesics, and also a "large deviation result". Lalley's result is closest in spirit to the current paper, but the methods are completely different (and I had no knowledge of the paper's existence until this writing).

\subsection*{Some motivation}
All of the results mentioned in the survey above are technically quite involved, and it was not clear what was really going on. This is what gave birth to the current paper. One observation was that it is a lot easier to work with groups (especially free groups) than with surfaces, and secondly, since fundamental groups are often quasi-isometric to the spaces they are fundamental groups of, one has the hope of obtaining "universal" results (that is, a result for a surface group implies a  result (usually somewhat weaker) for every surface of the appropriate type.

One particular insight (on which much of the paper is based) is the observation that for graphs, the Selberg Trace Formula (quite pervasive in the work surveyed above) is a triviality: the number of closed (based) cycles of length $N$ in the graph is the trace of the $N$-th power of the adjacency matrix, and thus the sum of the $N$th powers of eigenvalues of the adjacency matrix of the graph. In the particular case where the graph is undirected, the adjacency matrix is symmetric, and analysis becomes easy.
Technically simpler methods (based in large part on perturbation theory for eigenvalues) have helped to get results of much wider scope than previously. Let us now review the results and their follow-up in subsequent years.

\subsection*{Then what happened? Free groups and related subjects}
In Section \ref{modsec} we have set up the basic model, and used it to count cyclically reduced words in a free group. The basic method works for any automatic group, and if the structure is bi-automatic, we similarly get an undirected graph. Somewhat surprisingly, the count of cyclically reduced words has been used in a number of papers (see, eg, \cite{KapovichSchupp,coornaertasymp}), and in the paper \cite{koganov} by L. M. Koganov it is shown that the formula is equivalent to H. Whitney's formula for the chromatic polynomial of the cycle graph. Koganov had apparently published two other papers (in 2002 and 2004) deriving the enumeration of cyclically reduced words -- see references [1] and [2] in \cite{koganov}.

A related question is considered in Sections \ref{gf}, \ref{gf2}, \ref{ihara}, where we study the number of \emph{conjugacy classes} of fixed minimal length in the free group (and elsewhere). We construct an ordinary generating function (in the form of a Lambert Series, see \cite{hardywright} for definition), which turns out to be horribly irrational (this result has gone on to have a life of its own in \cite{conj}), and the zeta function enumerating \emph{primitive} conjugacy classes, which turns out to be an Ihara-type zeta function of the defining graph (see also the papers of Stark and Terras \cite{staterI,staterII,staterIII}). The conjecture that the (standard) generating function is irrational for all non-virtually-cyclic Gromov-hyperbolic groups is still open. The Ihara zeta function immediately gives asymptotic growth rates for primitive classes, however this is computed again by M. Coornaert in \cite{coornaertasymp}.

In Section \ref{homoesec} we write down explicit generating functions for the number of elements in the free group with a given abelianization. These formulas can be expressed as Chebyshev polynomials -- this is so, because the adjacency matrix of the "recognizing automaton" graph has only two non-trivial eigenvalues, and this is special to free groups. It would be interesting to write down formulas of this type for, eg, surface groups, and see what special functions arise. 

The fact that certain variations on Chebyshev polynomials arise as generating functions give previously unknown positivity result on combinations of their coefficients and shows that the functions $T_n(c \cos x)$ and $U_n(c \cos x),$ where $T$ and $U$ are Chebyshev polynomials of first and second kind respectively, and $c>1$ are positive semi-definite in the sense of Bochner. This, and the central limit theorem for the coefficients of ``Symmetrized Chebyshev Polynomials" appear in the author's paper 
\cite{cheb}.

The Central Limit theorem for distribution of elements of the free group $F_n$ is proved in Section 3, but the methods actually go through without much change to prove a ``Local Limit Theorem''. Such  a theorem was also shown by R. Sharp, using much more heavy lifting in his paper \cite{localsharp}. The central limit theorem was reproved, together with some variants of results of Phillips-Sarnak, Adachi-Sunada, and Katsuda-Sunada in Petridis and Risager's papers \cite{PetridisRisager2006,PetridisRisager2008}. The methods of \cite{PetridisRisager2006} involve perturbation theory, and so are similar to those of the current paper. Results of \cite{PetridisRisager2006} are closely related to those of \cite{krss} -- in that paper we show (using the ergodicity of the  $\SL(n, \integers)$ action on $\reals^n$ and the Central Limit Theorem for free groups in the current paper) that some probabilistic phenomena in the free group $F_n$ can be studied by descending to the abelian quotient.

The Central Limit Theorem has been extended in other ways as well: D. Calegari and Koji Fujiwara proved a central limit theorem for the values of \emph{bicombable} functions on word-hyperbolic groups in \cite{CalefujiCombable}, using Markov chain methods, while M. Horsham and R. Sharp extended the results to \emph{quasi-morphisms} of free groups by using the usual symbolic dynamics and thermodynamic formalism in \cite{HorshamSharpGroups}. 

Lest one think that every function of interest on free (or word-hyperbolic) group satisfies  a central limit theorem, we should note the results of Guivarc'h-LeJan (\cite{GuiLeJanAsymp,GuivLeJanAsympRect} )and Vardi (\cite{vardidedekind}), which show that the the distribution of lengths of geodesics on the modular surface satisfies a \emph{stable law} of Cauchy type.

\subsection*{Then what happened? Walks on graphs}
In Sections \ref{limitp} and \ref{bias} we look at homology modulo a prime $p$ and derive the expected equidistribution results (and also the analogue of \emph{Chebyshev bias}, see \cite{sarnakrubinstein}, which in this case is completely explicit).  More importantly, however, a study of the argument showed that instead of a finite abelian group we can take any \emph{compact} (in particular, any \emph{finite}) group -- the harmonic analysis goes through, although with some more work. The arguments in this paper are a little sketchy, but are presented in full detail in my papers \cite{riirred,effbounds}. These papers, together with \cite{rivzdense} are devoted to proving that certain phenomena in algebraic groups, as well as ``geometric'' groups, like the mapping class group and the outer automorphism group of the free group (and a large class of subgroups) are generic (which means that in large subsets of the groups in question, the vast majority of elements have a certain property -- see \cite{KapovichSchupp} for other examples). The way the results of the current paper are used is essentially through a ``Chinese remaindering'' argument -- if a certain property does \emph{not} hold for some fraction of the elements in the projection of an algebraic group (scheme) over $\integers/ p\integers,$ then it does not hold generically in the group over $\integers.$ Using property $T$ and a more refined analysis (as in \cite{effbounds}) give estimates of convergence speed. The appearance of the paper \cite{riirred} is responsible the subsequent appearance of E.~Kowalski's book \cite{Kowalskibook}, where these rather simple ideas are couched in a rather formidable apparatus.

\subsection*{Then what happened? Topological entropy} In the mid-to-late 1990s, the spectacular results of G. Besson, G. Courtois, and S. Gallot on ``volume rigidity'' of locally symmetric spaces (see \cite{BCG95}) were generating a lot of excitement. The result was that among all the metrics of a given volume on a hyperbolic manifold, the metric of constant sectional curvature minimizes volume entropy -- this answered a conjecture of Gromov stated in \cite{GromovVol}, and previously known only in dimension two (thanks to A. Katok's result \cite{katokgeodesicsent}). Any time a function has a single minimum, there is a suspicion that some sort of convexity is afoot, and entropy in the simplest setting (see, for example, \cite{shannoninfo}) is a convex function of the probabilities, and this pushed the author to analyze topological entropy for walks on graphs as a function of weights on the vertices in Section \ref{entropy}. The methods are again those of perturbation theory. Later, the result was extended to \emph{edge} weightings by S. Lim in \cite{liment}. Lim does \emph{not} prove convexity, but does write down the unique metric of minimal entropy. A related minimality result is proved by I. Kapovich and T. Nagnibeda in their paper \cite{kapovichnagnibeda} for \emph{regular} graphs (their work has its roots in the study of Outer Space. In a different direction, the convexity of entropy was used by I. Kapovich and myself in \cite{kaprivinnomcshane} to show that there is no analogue to McShane's identity in OuterSpace.

\section*{Introduction}

In this paper we begin by studying certain growth functions of the
free group $F_r$, related to well-studied questions on the growth
functions of geodesics on manifolds.
The free group is a relatively simple combinatorial
object, and this allows us to get fairly complete answers to our
questions. Our techniques, which are quite elementary, allow us to get
precise results on the distribution of elements in $F_r$ as a function
of their abelianization and in terms of their abelianization mod
$p$. Our techniques turn out to be easily extensible to the study of
paths in graphs with coefficients in compact groups.

Here is an outline of the paper:
In Section \ref{modsec} we set up an equivalence between counting
cyclically reduced words on the free group $F_r$ and counting circuits
on an associated graph $\mathcal{ G}_r,$ which, in turn, involves
understanding the spectrum of the adjacency matrix of $\mathcal{ G}_r$ (of
course the answer is easily obtained, and is well-known; for
convenience we state it as Theorem \ref{cycredno}). 
We use this framework to obtain a generating function for the number of
elements of a fixed cyclically reduced length with prescribed
abelianization (or homology class). This turns out to be
essentially a Chebyshev polynomial of the first kind; see Definition
\ref{rfun} of the function $R_r$ and Theorem \ref{homoenum} (a very
brief introduction to Chebyshev polynomials is 
given in Section \ref{chebint}). The fact that the function 
$R_r(c; {\bf x})$ (at least for some special values of the parameter
$c$) is a combinatorial generating function implies a
previously unnoticed positivity result on Chebyshev polynomials; this result
is generalized in Section \ref{genanal} in Theorems \ref{nonneg}
and \ref{mnonneg}. Theorem \ref{homoenum} is used
in Section \ref{limitu} to derive a limiting
distribution (as $n$ tends to infinity) of cyclically reduced words
length $n$ among the possible homology classes. From the analytic
standpoint this is also a qualitative result about Chebyshev
polynomials, complementing the positivity Theorems \ref{nonneg} and
\ref{mnonneg}. In Section \ref{limitp} we show that if we study
homology $\mod p$, then the cyclically reduced words in $F_r$ are
asymptotically equidistributed among the $p^r$ classes in $H_1(F_r,
\integers/p \integers).$ We also succeed in estimating the extent to
which the cyclically reduced words in $F_r$ are {\em not}
equidistributed mod $p$ (Section \ref{bias}).

While the results in Sections \ref{limitu} and \ref{limitp} seem to
depend on the explicit generating function that we have obtained, in
Section \ref{graphs} we show that our techniques are more general,
and use them to study the equidistribution properties of long walks
on regular graphs -- we obtain a complete answer (Theorem
\ref{walks}) -- and, without any change, closed orbits of irreducible
primitive Markov processes (with a finite number of states). The
arguments use elementary perturbation theory and the necessary
technical results are contained in Section \ref{perturb}. 

In Section \ref{noback} we extend our methods to study the functions
defined on the \emph{edges} of a graph, and as an application we
derive the statistical properties of long walks \emph{without
backtracking} on the edges of an undirected graph.

We apply our
methods to derive equidistribution results for long walks with
coefficients in compact groups in Sections \ref{primedist} and
\ref{compgp}. Our results are completely explicit, in that knowing the
irreducible representations of the group in question allows us to
obtain complete asymptotics for the convergence to uniformity.
Our results also apply, via the construction of a directed edge graph
to the statistics of ``geodesic'', that is, backtrackless paths
(Section \ref{noback}). This,
in turn, implies a result on the statistical properties of ``primitive''
orbits of Markov processes as above.

In Section \ref{appsec} we point out real and philosophical
applications of the above mentioned result to group theory (where this
all started) and geometry. 

Finally, in Sections \ref{gf}-\ref{gf3} we derive a relationship between the
number of cyclically reduced words and the number of conjugacy classes
of bounded length. While the generating function of the first is a
rational function, the generating function of the second is
the integral of a Lambert series with an infinite number of poles. These
results are then extended to a slightly more general case than that of
free groups. We then (in Section \ref{ihara}) compute a zeta function
for primitive conjugacy classes, and show that this {\em is} a
rational function. 

\section{A model and a generating function}
\label{modsec}

Let $G$ be the free group $F_r=\langle a_1, \ldots, a_r \rangle$, and
let $g \in G$ be an element. The defining property of $G$ is that $g$
is {\em uniquely} represented by a {\em reduced} word in $a_1, \ldots,
a_r$, that is, a word where $a_i$ is never adjacent to $a_i^{-1}$
(Notation: in the sequel we shall write $W$ for $w^{-1}$). We
observe that such words over the alphabet $a_1, A_1, \ldots, a_n, A_n$
are, in turn, be generated by walks on the graph $\mathcal{
G}_r$, constructed as follows: $\mathcal{ G}_n$ has $2 r$ vertices,
labelled with the symbols $a_1, \ldots, a_r, 
A_r, \ldots, A_1$ -- this peculiar order will simplify
notation later. The vertex corresponding to $a_i$ is
connected by an edge to every vertex {\em except} $A_i$. In
particular, there is a loop joining $a_i$ to itself (so that $\mathcal{
G}_r$
 is not a {\em simple} graph). A walk $v_1 v_2 \ldots v_k$ gives the word $v_2 \ldots
v_k$, so the correspondence between walks and words is a $2r-1$-to-$1$
mapping. Note, however, that if we restrict our attention to closed
walks (circuits with basepoint) on $\mathcal{ G}_r$, then those are in bijective
correspondence with {\em cyclically reduced} words in $G$. In
the sequel we will be interested exclusively with cyclically reduced
words. 

\subsection{Counting cyclically reduced words}

To count cyclically reduced words, then, we need to count circuits in
$\mathcal{ G}_r$. This is a well-understood problem:  If $\mathcal{ A}_r$ is the
adjacency matrix of $\mathcal{ G}_r$, then the number of circuits of
length $k$ is equal to the trace of $\mathcal{ A}_r^k$. To compute this trace we
must compute the spectrum of $\mathcal{ A}_r$, and to do this, it is better to
write $\mathcal{ A}_r = J_{2r} - P_r$, where $J_N$ is an $N\times N$
matrix all of whose elements are $1$ and $P_r$ is the $2r \times 2r$
matrix such that
\[(P_r)_{ij} = \begin{cases}
                1, &\text{if $i+j=2r;$}\\
                      0, &\text{otherwise.}
                \end{cases}
\]
In order to compute the spectrum of $\mathcal{ A}_r$, we note first that the
matrix $J_{2r}$ has rank $1$. The kernel of $J_{2r}$ is 
\[\ker J_{2r} = \{(v_1, \ldots, v_{2r}) \bigm| \sum_{i=1}^{2 r} v_i =
0\},\] while the vector ${\bf 1} = (1, \ldots, 1)$ is the eigenvector
of eigenvalue $2 r$. 

The spectrum of $P_r$ is not much more difficult to compute: The vector
${\bf 1}$ is the eigenvector of $P_r$ as well as of $J_{2r}$, this
time with eigenvalue $1$. To compute the rest of the spectral
decomposition, let ${\bf x}$ be an eigenvector of $P_r$ orthogonal to
${\bf 1}$, and let $\lambda$ be the corresponding eigenvalue. Then we
have the following set of equations:
\begin{gather*}
\sum_{j=1}^{2r} x_j = 0 \label{ort}\\
x_j = \lambda x_{2r - j+1}, \qquad j=1, \ldots 2 r.
\end{gather*}
Since at least one of the $x_j$ is not equal to zero, we see that
$\lambda^2 = 1$, so $\lambda = \pm 1.$ The orthogonality condition
Eq. (\ref{ort}) can be rewritten as $\sum_{j=1}^r (1+\lambda) x_j = 0.$
Suppose $\lambda = -1$. Then, 
Eq. (\ref{ort}) holds {\em a forteriori}, and so the
eigenspace of of $-1$ is $r$-dimensional. On the other hand, if
$\lambda = 1$, then we have the additional constraint that
$\sum_{j=1}^r x_j=0,$ so the eigenspace of $1$ is $n-1$ dimensional. 
Putting this all together, we see that the spectrum of the adjacency
matrix $\mathcal{ A}_r$ is $(2r-1, \underbrace{1, \ldots, 1}_r, \underbrace{-1,
\ldots, -1}_{r-1}).$ We see therefore:

\begin{theorem}
\label{cycredno}
The number of cyclically reduced words of length $m$ in $F_r$ is equal
to $(2r-1)^m + 1 + (r-1)[1+(-1)^m].$
\end{theorem}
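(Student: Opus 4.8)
The plan is to assemble pieces already in place in Section \ref{modsec}. First I would recall that cyclically reduced words of length $m$ in $F_r$ are in bijection with closed walks of length $m$ on $\mathcal{G}_r$, so the number we want is the number of such closed walks, which is the standard quantity $\tr \mathcal{A}_r^m$. The one bookkeeping point worth stating explicitly here is that the generic $(2r-1)$-to-$1$ correspondence between walks $v_1 v_2 \cdots v_k$ and words $v_2 \cdots v_k$ becomes an honest bijection once we restrict to closed walks (the basepoint is then forced), so no stray factor of $2r-1$ survives, and that the matching of the parameter ``$m$'' (number of letters versus number of edges of the walk) is consistent on both sides.

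Next, since $\mathcal{G}_r$ is undirected, $\mathcal{A}_r$ is a real symmetric matrix, hence diagonalizable over $\reals$, so $\tr \mathcal{A}_r^m = \sum_i \lambda_i^m$, the sum of the $m$-th powers of the eigenvalues taken with multiplicity. The decomposition $\mathcal{A}_r = J_{2r} - P_r$ and the spectral analysis carried out just before the statement produce this multiset of eigenvalues as $2r-1$ (once), $1$ (with multiplicity $r$), and $-1$ (with multiplicity $r-1$).

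Combining these, $\tr \mathcal{A}_r^m = (2r-1)^m + r \cdot 1^m + (r-1)(-1)^m = (2r-1)^m + r + (r-1)(-1)^m$. Writing $r = 1 + (r-1)$ and regrouping the last two terms gives $(2r-1)^m + 1 + (r-1)\bigl[1 + (-1)^m\bigr]$, which is exactly the asserted formula; one can double-check it against the parity split (for $m$ even it reads $(2r-1)^m + 2r - 1$, for $m$ odd it reads $(2r-1)^m + 1$) and against tiny cases such as $r=1$ (giving $2$ for every $m$, namely $a^{\pm m}$).

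There is no genuinely hard step in this argument: the substance is entirely in the eigenvalue count that precedes the theorem, and the rest is the trace-equals-sum-of-eigenvalue-powers identity plus trivial arithmetic. The only place demanding a moment of care is the first paragraph — confirming that the passage from all walks to closed walks converts the $(2r-1)$-to-$1$ map into a bijection with cyclically reduced words, and that ``length $m$'' is interpreted compatibly on the two sides — so that is what I would expect to state most carefully.
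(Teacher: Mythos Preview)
Your proposal is correct and follows exactly the paper's own argument: the discussion preceding the theorem establishes the bijection with closed walks on $\mathcal{G}_r$, computes the spectrum of $\mathcal{A}_r = J_{2r} - P_r$ as $(2r-1,\,1^r,\,(-1)^{r-1})$, and the theorem is then the sum of $m$-th powers of these eigenvalues. There is nothing to add or to do differently.
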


\section{Counting cyclically reduced words in homology classes}
\label{homoesec}

Recall that the abelianization of $F_r$ is $\integers^r$, generated by the
classes of $[a_1], \ldots, [a_r]$ of $a_1, \ldots, a_r$
respectively. To compute the homology class of a word 
$w$ in $F_r$ we simply count the total exponents $e_1(w), \ldots, e_r(w)$ of
the generators used to write $w$. Then, $[w] = e_1(w) [a_1] + \cdots +
e_r(w) [a_r]$. In this section we will compute the following
generating function:

\[\mathcal{ H}_r^{(k)}(x_1, \ldots, x_r) = \sum_{w\in W_k} \prod_{i=1}^r x_i^{e_i(w)},\]
where the sum is taken over the set $W_k$ of all cyclically reduced words $w$ in $a_1,
\ldots, a_r, A_1, \ldots, A_r$ of length $k$.

To compute $\mathcal{ H}_r^{(k)}$, we return to circuits in $\mathcal{
G}_r$. Given a circuit $c=v_1, \ldots, v_k, v_{k+1} = v_1$, the
contribution of $c$ to $\mathcal{ H}_r^{(k)}$ is the monomial $m_c$ given by the
following iterative procedure: we start with $1$, every time we see
the vertex $a_i$, we multiply $m_c$ by $x_i$, and every time we see
$A_i$, we multiply $m_c$ by $1/x_i$. From this, it follows that:

\thm{theorem}
{
\label{enumer}
The Laurent polynomial $\mathcal{ H}_r^{(k)}$ is given by $\tr B_r^k$,
where $B_r =  D_r \mathcal{ A}_r$, where, in turn,  
\[D_r = \begin{pmatrix}
x_1 &        &      &            &        & \cr
                     & \ddots &     &            &        & \cr
                     &        & x_n &            &        & \cr
                     &        &     & 1/x_n&      & \cr
                     &        &     &            & \ddots & \cr
                     &        &     &            &        &
                     1/x_1
\end{pmatrix}\]
}

Computing the trace of $B_r^k$ seems daunting at first, but one can
use the approach we have used to prove Theorem \ref{cycredno}. 

First, note that
\[B_r = D_r \mathcal{ A}_r = D_r J_{2r} - D_r P_r.\]
Evidently, the rank of $D_r J_{2r}$ is still equal to $1$, and 
\[\ker D_r J_{2r} = \{ {\bf v} = (v_1, \ldots, v_{2r}) \bigm| 
                    \sum_{j=1}^{2r} v_j = 0\}\]

Note further that an eigenvector ${\bf v}$ of $D_r P_r$, such that
${\bf v} \in \ker D_r J_{2r}$, with associated eigenvalue $\lambda$, is
also an eigenvector of $B_r$, with associated eigenvalue
$-\lambda$. To find such an eigenvector, we must solve the system of
equations: 

\begin{gather*}
\sum_{j=1}^{2r} v_j  = 0 \\
\lambda v_j= v_{2r-j+1}/x_j,\quad j\leq r\\
\lambda v_j= v_{2r-j+1} x_j,\quad j > r.
\end{gather*}
We find, as before, that $\lambda=\pm 1$. The first equation reduces
 (almost as before) to 
\[\sum_{j=1}^r v_j(1+\lambda x_j) = 0,\] so that the eigenspaces of
both $1$ and $-1$ are $(r-1)$-dimensional. What are the two remaining
eigenvalues $\mu_1$ and $\mu_2$ of $B_r$? Note that since $\det D_r =
1,$ we know that  $\det B_r = \det \mathcal{ A}_r.$ Note now that 
$\det B_r = \mu_1 \mu_2 (-1)^{r-1}$, while $\det \mathcal{ A}_r =
(2r-1)(-1)^{r-1}.$ So
\begin{equation}
\mu_1 \mu_2 = 2r -1 \label{prodeq}.
\end{equation}
On the other hand, 
\begin{equation}
\mu_1 + \mu_2 = \tr B_r = \sum_{j=1}^r (x_j + {\frac1{x_j}}). \label{sumeq}
\end{equation}
Denoting $y_r = {\frac12} \sum_{j=1}^n (x_j + 1/x_j),$
we see that $\mu_1, \mu_2$ are the two roots of the equation 
$z^2 - 2 y_r z + (2r-1)=0,$ so that:
\begin{align*}
\mu_1& = y_r - \sqrt{y_r^2 - (2r-1)},\\
\mu_2& = y_r + \sqrt{y_r^2 - (2r-1)}.
\end{align*}
The trace of $B_r^k$ is then equal to $\mu_1^k + \mu_2^k + (r-1)[1 +
(-1)^k].$ This can be expressed in terms of well known special
functions, if we make the substitution $y_r = \sqrt{2r-1}
y_r^\prime.$ Then, 
\begin{eqnarray*}
\mu_1^k&= (2r-1)^{k/2} \left(y_r^\prime - \sqrt{{y_r^\prime}^2 - 1}\right)^k,\\
\mu_2^k&= (2r-1)^{k/2} \left(y_r^\prime + \sqrt{{y_r^\prime}^2 - 1}\right)^k,
\end{eqnarray*}
and so 
\begin{eqnarray*}
\mu_1^k + \mu_2^k &= (2r-1)^{k/2} \left\{\left(y_r^\prime -
\sqrt{{y_r^\prime}^2 - 1}\right)^k +\left(y_r^\prime +
\sqrt{{y_r^\prime}^2 - 1}\right)^k\right\}\\
 &= 2 (\sqrt{2r-1})^k T_k(y_r^\prime),
\end{eqnarray*}
where $T_k(x)$ is the $k$-th Chebyshev polynomial of the first kind.
To simplify notation in the sequel, we define:

\begin{definition}
\label{rfun}
\begin{eqnarray*}
R_n(c; x_1, \dots, x_k) &= T_n\left({\frac{c} {2k}}\sum_{i=1}^k\left(x_i +
{\frac1{x_i}}\right)\right)\\
S_n(c; x_1, \dots, x_k) &= U_n\left({\frac{c}{2k}}\sum_{i=1}^k\left(x_i +
{\frac1{x_i}}\right)\right).
\end{eqnarray*}
\end{definition}

And to summarize:
\begin{theorem}
\label{homoenum}
The number of cyclically reduced words of length $k$ in $F_r$
homologous to $e_1 [a_1] + \cdots + e_r [a_r]$ is equal to the
coefficient of $x_1^{e_1} \cdots x_r^{e_r}$ in 
\begin{equation}
\label{genfn}
2\left(\sqrt{2r-1}\right)^k 
R_k({\frac{r} {\sqrt{2r-1}}}; x_1, \dots, x_r)  + (r-1)[1 + (-1)^k]
\end{equation}
\end{theorem}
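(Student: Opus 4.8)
The plan is to combine Theorem \ref{enumer}, which identifies $\mathcal{H}_r^{(k)}$ with $\tr B_r^k$, with the explicit eigenvalue computation already carried out in the text. Theorem \ref{enumer} tells us that $\mathcal{H}_r^{(k)}(x_1,\dots,x_r) = \tr B_r^k$, and the coefficient extraction we want is exactly the statement that the coefficient of $x_1^{e_1}\cdots x_r^{e_r}$ in the Laurent polynomial $\tr B_r^k$ counts cyclically reduced words of length $k$ in the homology class $e_1[a_1]+\cdots+e_r[a_r]$: this is immediate from the monomial-weighting bijection between closed walks in $\mathcal{G}_r$ and cyclically reduced words, since the iterative weight $m_c$ assigned to a circuit $c$ is precisely $\prod_i x_i^{e_i(w)}$ for the associated word $w$. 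So the entire content of Theorem \ref{homoenum} is to rewrite $\tr B_r^k$ in the closed form (\ref{genfn}).

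First I would invoke the spectral decomposition of $B_r$ established just above Definition \ref{rfun}: the eigenvalues of $B_r$ are $\mu_1,\mu_2$ together with $1$ with multiplicity $r-1$ and $-1$ with multiplicity $r-1$, where $\mu_1,\mu_2$ are the roots of $z^2 - 2y_r z + (2r-1) = 0$ with $y_r = \frac12\sum_{j=1}^r(x_j + 1/x_j)$. (The key inputs here are: $\det D_r = 1$, hence $\det B_r = \det\mathcal{A}_r = (2r-1)(-1)^{r-1}$, giving $\mu_1\mu_2 = 2r-1$; and $\tr B_r = \tr D_r\mathcal{A}_r = \sum_j(x_j + 1/x_j)$ since the diagonal of $\mathcal{A}_r$ is all $1$'s, giving $\mu_1+\mu_2 = 2y_r$.) Therefore
\[
\tr B_r^k = \mu_1^k + \mu_2^k + (r-1)\cdot 1^k + (r-1)\cdot(-1)^k = \mu_1^k + \mu_2^k + (r-1)[1 + (-1)^k].
\]

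Next I would carry out the trigonometric substitution already indicated in the text: writing $y_r = \sqrt{2r-1}\,y_r'$, one gets $\mu_1 = \sqrt{2r-1}\,(y_r' - \sqrt{{y_r'}^2 - 1})$ and $\mu_2 = \sqrt{2r-1}\,(y_r' + \sqrt{{y_r'}^2 - 1})$, so that
\[
\mu_1^k + \mu_2^k = (2r-1)^{k/2}\left[(y_r' - \sqrt{{y_r'}^2 - 1})^k + (y_r' + \sqrt{{y_r'}^2 - 1})^k\right] = 2(\sqrt{2r-1})^k\,T_k(y_r'),
\]
using the standard identity $T_k(\cos\theta) = \cos k\theta$, i.e. $T_k\left(\tfrac{z + z^{-1}}{2}\right) = \tfrac{z^k + z^{-k}}{2}$, applied with $z = y_r' + \sqrt{{y_r'}^2-1}$. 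Since $y_r' = y_r/\sqrt{2r-1} = \tfrac{1}{2r\cdot}\cdot\tfrac{r}{\sqrt{2r-1}}\sum_{j=1}^r(x_j+1/x_j)$ — more precisely $y_r' = \tfrac{r}{\sqrt{2r-1}}\cdot\tfrac{1}{2r}\sum_{j=1}^r(x_j + 1/x_j)$ — this is exactly $R_k\!\left(\tfrac{r}{\sqrt{2r-1}}; x_1,\dots,x_r\right)$ by Definition \ref{rfun} with $c = r/\sqrt{2r-1}$ and $k$ there equal to $r$ here. Assembling the pieces gives $\tr B_r^k = 2(\sqrt{2r-1})^k R_k\!\left(\tfrac{r}{\sqrt{2r-1}}; x_1,\dots,x_r\right) + (r-1)[1+(-1)^k]$, which is (\ref{genfn}).

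**Main obstacle.** There is no serious obstacle — the theorem is essentially a bookkeeping summary of the computation preceding it. The one point requiring a little care is that $T_k$ is a polynomial, so the identity $T_k\!\left(\tfrac{z+z^{-1}}{2}\right) = \tfrac{z^k+z^{-k}}{2}$, though derived via $z = e^{i\theta}$, holds as an identity of rational functions in $z$ and hence may be specialised at $z = y_r' + \sqrt{{y_r'}^2 - 1}$ even when $y_r'$ is not in $[-1,1]$ (indeed for positive real $x_j$ we have $y_r' \geq 1$); this is what legitimises calling the result a Chebyshev polynomial of the first kind evaluated off the interval $[-1,1]$. The only other thing to check is the matching of the normalisation constant $c = r/\sqrt{2r-1}$ against Definition \ref{rfun}, which is a one-line verification.
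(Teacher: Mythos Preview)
Your proposal is correct and follows essentially the same route as the paper: the theorem is stated there as a summary of the preceding computation, and you have reproduced that computation accurately (the spectral decomposition of $B_r$, the Newton-identity step $\mu_1\mu_2=2r-1$, $\mu_1+\mu_2=2y_r$, and the Chebyshev substitution $y_r=\sqrt{2r-1}\,y_r'$). Your remark that the identity $T_k\bigl(\tfrac{z+z^{-1}}{2}\bigr)=\tfrac{z^k+z^{-k}}{2}$ is a polynomial identity, hence valid for $y_r'\notin[-1,1]$, is a nice clarification the paper leaves implicit.
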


\begin{remark}
The rescaled Chebyshev polynomial $T_k(a x)/a^k$ is called
the $k$-th Dickson polynomial $T_k(x, a)$ (see \cite{SchurCheb}).
\end{remark}

\section{Some facts about Chebyshev polynomials}
\label{chebint}

The literature on Chebyshev polynomials is enormous; \cite{rivlin} is
a good to start. Here, we shall supply the barest essentials in an
effort to keep this paper self-contained. 

There are a number of ways to define Chebyshev polynomials (almost as
many as there are of spelling their inventor's name). A standard
definition of the {\em Chebyshev polynomial of the first kind}
$T_n(x)$ is:

\begin{equation}
\label{def1}
T_n(x) = \cos n \arccos x.
\end{equation}
In particular, $T_0(x) = 1,$ $T_1(x) = x.$ Using the identity 
\begin{equation}
\label{cosid}
\cos(x+y) + \cos(x-y) = 2 \cos x \cos y
\end{equation}
we immediately find the three-term recurrence for Chebyshev
polynomials:
\begin{equation}
\label{threerec}
T_{n+1}(x) = 2 x T_n(x) - T_{n-1}(x).
\end{equation}
The definition of Eq. (\ref{def1}) can be used to give a ``closed
form'' used in Section \ref{homoesec}:
\begin{equation}
\label{sqrtdef}
T_n(x) = {\frac12}\left[\left(x - \sqrt{x^2-1}\right)^n + \left(x +
\sqrt{x^2-1}\right)^n\right].
\end{equation}
Indeed, let $x = \cos \theta.$ then $\left(x - \sqrt{x^2-1}\right)^n =
\exp(-i n \theta),$ while $\left(x + \sqrt{x^2-1}\right)^n = \exp(i n
\theta),$ so
${\frac12}\left(x - \sqrt{x^2-1}\right)^n + \left(x +
\sqrt{x^2-1}\right)^n = \Re \exp(i n \theta) = \cos n \theta.$

Though we will not have too many occasions to use them, we also define
Chebyshev polynomials of the second kind $U_n(x)$, which can again be
defined in a number of ways, one of which is:
\begin{equation}
\label{derivdef}
U_n(x) = {\frac1{n+1}}T_{n+1}^\prime(x).
\end{equation}
A simple manipulation shows that if we set $x = \cos \theta,$ as
before, then
\begin{equation}
\label{trigdef}
U_n(x) = \frac{\sin (n+1) \theta}{\sin \theta}.
\end{equation}
In some ways, Schur's notation $\mathcal{ U}_n = U_{n-1}$ is
preferable. In any case, we have $U_0(x) = 1$, $U_1(x) = 2 x,$ and
otherwise the $U_n$ satisfy the same recurrence as the $T_n$, to wit,
\begin{equation}
\label{trec2}
U_{n+1}(x) = 2 x U_n(x) - U_{n-1}(x).
\end{equation}
From the recurrences, it is clear that for $f=T, U$,
$f_n(-x) = (-1)^n f(x),$ or, in other words, every second coefficient
of $T_n(x)$ and $U_n(x)$ vanishes. The remaining coefficients
alternate in sign; here is the explicit formula for the coefficient
$c_{n-2m}^{(n)}$ of
$x^{n-2m}$ of $T_n(x):$
\begin{equation}
\label{coefform}
c_{n-2m}^{(n)} = (-1)^m {\frac{n}{n-m}} {\binom{n-m}{m}} 2^{n-2m-1}, \qquad
m=0, 1, \ldots, \left[{\frac{n}2}\right].
\end{equation}
This can be proved easily using Eq. (\ref{threerec}).

\section{Analysis of the functions $R_n$ and $S_n$.}
\label{genanal}

In view of the alternation of the coefficients, the appearance of the
Chebyshev polynomials as generating functions in Section
\ref{homoesec} seems a bit surprising, since combinatorial generating
functions have non-negative coefficients. Below we state and prove a
generalization. Remarkably,
Theorems \ref{nonneg} and \ref{mnonneg} do not seem to have been
previously noted. 

\begin{theorem}
\label{nonneg}
Let $c > 1.$ Then all the coefficients of $R_n(c; x)$
are non-negative. Indeed the coefficients of $x^n, x^{n-2}, \ldots,
x^{-n + 2}, x^{-n}$ are positive, while the other coefficients are
zero. The same is true of $S_n$ in place of $R_n.$
\end{theorem}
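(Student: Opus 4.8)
The plan is to prove the statement by giving a direct combinatorial interpretation of $R_n(c;x)$ when $c>1$ in a range that is large enough to force all the coefficients to be non-negative, and then to argue the strict positivity of the "odd/even-matched" coefficients separately. The cleanest route I would take goes through the closed form of Equation~(\ref{sqrtdef}). Writing $w = \frac{c}{2}(x + 1/x)$ (here with a single variable, so $k=1$ in Definition~\ref{rfun}), we have $R_n(c;x) = T_n(w) = \frac12\bigl[(w - \sqrt{w^2-1})^n + (w + \sqrt{w^2 - 1})^n\bigr]$. The quantity $w^2 - 1 = \frac{c^2}{4}(x+1/x)^2 - 1$ is, for $c>1$, a Laurent polynomial whose "extreme" behaviour I would analyze; the key algebraic fact I want is that $w \pm \sqrt{w^2-1}$, after clearing an overall power of $x$, has non-negative coefficients, so that the symmetric combination does too. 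Concretely, $w^2 - 1 = \frac{1}{4}\bigl[c(x+1/x) - 2\bigr]\bigl[c(x+1/x)+2\bigr]$, and when $c \ge 1$ the factor $c(x+1/x) + 2 = \frac{c}{x}(x + r_+)(x+r_-)$ has a "nice" (real, negative) pair of roots while $c(x+1/x) - 2 = \frac{c}{x}(x - s_+)(x - s_-)$ with $s_\pm > 0$; I would use this to write $\sqrt{w^2-1}$ itself as a Laurent series (in a suitable annulus) with a definite sign pattern after normalization.

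Alternatively — and this is probably the slicker argument to actually write down — I would go back to Theorem~\ref{homoenum}. There it is shown that for the \emph{specific} value $c = r/\sqrt{2r-1}$ (with $r \ge 2$ an integer), the expression $2(\sqrt{2r-1})^k R_k(c;x)$ plus a correction term counts cyclically reduced words by homology class, hence has non-negative coefficients; and the correction term $(r-1)[1+(-1)^k]$ only affects the constant coefficient. So for this discrete family of values $c_r = r/\sqrt{2r-1}$, which satisfy $c_r > 1$ for all $r \ge 2$ and which \emph{accumulate at} $1$ as well as tend to $\infty$, we already know the result, except possibly at the middle coefficient. The remaining task is an interpolation/analyticity argument: the coefficient of $x^{n-2m}$ in $R_n(c;x)$ is a polynomial in $c$ (indeed, substituting $w = \frac{c}{2}(x+1/x)$ into the polynomial $T_n$ and collecting powers of $x$ gives each Laurent coefficient as an explicit polynomial in $c$ with, from Equation~(\ref{coefform}), a computable leading term), and a polynomial that is non-negative on an infinite set with a limit point in $(1,\infty)$ need not be non-negative on all of $(1,\infty)$ — so pure interpolation does not close the argument, and I would instead combine it with the explicit sign analysis of the previous paragraph. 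The honest structure is: (i) show each Laurent coefficient of $R_n(c;x)$ is a polynomial $p_{n,m}(c)$; (ii) from Equation~(\ref{coefform}) applied to $T_n(w)$ and the multinomial expansion of $(x+1/x)^j$, extract that the top-degree term of $p_{n,m}$ in $c$ has a \emph{positive} coefficient; (iii) show by the combinatorial input that $p_{n,m}(c_r) \ge 0$ for all integers $r\ge 2$; (iv) rule out real zeros of $p_{n,m}$ in $(1,\infty)$. Step (iv) is where the real content lives.

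For step (iv) I expect the cleanest tool is to differentiate $R_n$ with respect to $c$ and show monotonicity, or — better — to observe that $R_n(c;x) = T_n\bigl(\frac c2(x+1/x)\bigr)$ and that $T_n$ is increasing on $[1,\infty)$ with $T_n(t) \ge 1$ there, so the "problematic" region is only where the argument $\frac c2(x+1/x)$ ranges over $(-1,1)$; but as a formal Laurent polynomial this positivity of coefficients is a statement about $c$ as an indeterminate, which I would reduce to the positivity for the single value $c$ by homogeneity: replacing $x$ by $\sqrt{c}\,x$ shows $R_n(c;x) = R_n\bigl(1; \text{(something)}\bigr)$ is \emph{not} quite available, so the real reduction is $R_n(c;x) = T_n\bigl(\tfrac c2 (x + x^{-1})\bigr)$ and I would expand $T_n$ via Equation~(\ref{coefform}) and then expand each $(x+x^{-1})^{n-2m}$, getting the coefficient of $x^\ell$ as a sum $\sum_{m} c_{n-2m}^{(n)}\binom{n-2m}{(n-2m-\ell)/2}(c/2)^{n-2m}$; here the signs $(-1)^m$ of $c_{n-2m}^{(n)}$ genuinely alternate, so non-negativity of the whole sum for $c>1$ is a real inequality among binomial coefficients times powers of $c$, and that is the main obstacle. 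I would attack it either by recognizing the sum as a value of a hypergeometric function that is manifestly positive for $c>1$, or by finding a generating-function identity that exhibits $R_n(c;x)$ (times an appropriate monomial and constant) as a product/sum of series with non-negative coefficients — the factorization of $w^2-1$ above being the natural candidate. The statement for $S_n$ in place of $R_n$ then follows by the identical argument, using $U_n$ in place of $T_n$ and the analogous coefficient formula, or more quickly from Equation~(\ref{derivdef}): $S_n(c;x)$ is, up to the positive constant $1/(n+1)$ and the chain-rule factor, the $c$-derivative structure of $R_{n+1}$, and differentiation in the argument does not spoil the established sign pattern of the Laurent coefficients.
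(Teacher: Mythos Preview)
Your proposal is not a proof: it is a sequence of plans, each of which you abandon before the decisive step. The combinatorial route via Theorem~\ref{homoenum} gives non-negativity only for the discrete values $c_r = r/\sqrt{2r-1}$, and you yourself note that a polynomial non-negative on such a set need not be non-negative on all of $(1,\infty)$; you then list ``rule out real zeros of $p_{n,m}$ in $(1,\infty)$'' as step~(iv), remark that ``step~(iv) is where the real content lives'', and never carry it out. The factorization of $w^2-1$ is likewise left dangling: $\sqrt{w^2-1}$ is not a Laurent polynomial in $x$, and you do not say in which annulus you expand it, what sign pattern the resulting series has, or how the two branches $(w\pm\sqrt{w^2-1})^n$ recombine into a Laurent polynomial with non-negative coefficients. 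Finally, the passage from $R_n$ to $S_n$ via Eq.~(\ref{derivdef}) does not work as written: differentiating $T_{n+1}$ in its argument and then composing with $\tfrac{c}{2}(x+x^{-1})$ is not an operation that manifestly preserves signs of Laurent coefficients.

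The paper's argument is entirely different and much shorter: it uses the three-term recurrence (Eqs.~(\ref{threerec}) and~(\ref{trec2})). If $a_n^k$ denotes the coefficient of $x^k$ in $S_n(c;x)=U_n\bigl(\tfrac{c}{2}(x+x^{-1})\bigr)$, the recurrence becomes
\[
a_{n+1}^k = c\,(a_n^{k-1}+a_n^{k+1}) - a_{n-1}^k.
\]
One proves by induction on $n$ the stronger package: $a_n^k>0$, $a_n^k > \max(a_{n-1}^{k-1},a_{n-1}^{k+1})$, and $a_n^k > a_{n-2}^k$, whenever $n-k$ is even. The base case needs exactly $c>1$ (since $a_1^{\pm1}=c$ must dominate $a_0^0=1$), and the induction step is immediate from the recurrence. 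This settles $S_n$. The same induction does \emph{not} launch for $T_n$, because there $b_1^{\pm1}=c/2$ need not exceed $b_0^0=1$; instead one invokes the identity $T_n=\tfrac12(U_n-U_{n-2})$ (Eq.~(\ref{moretrig})) to get $b_n^k=\tfrac12(a_n^k-a_{n-2}^k)>0$ from the monotonicity already established for the $a_n^k$. You never touch the recurrence, which is the natural and sufficient tool here.
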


\begin{proof} Let $a_n^k$ be the coefficient of $x^k$ in
$U_n((c/2)(x+1/x)).$ The recurrence gives the following recurrence for the
$a_n^k:$
\begin{equation}
\label{newrec}
a_{n+1}^k = c(a_n^{k-1}+a_n^{k+1}) - a_{n-1}^k.
\end{equation}
Now we shall show that the following always holds:

\begin{description}
\item[(a)] $a_n^k \geq 0$ (inequality being strict if and only if
$n-k$ is even). 

\item[(b)] $a_n^k \geq \max(a_{n-1}^{k-1}, a_{n-1}^{k+1}),$ the
inequality strict, again, if and only if $n-k$ is even.

\item[(c)] $a_n^k \geq a_{n-2}^k$ (strictness as above).
\end{description}

The proof proceeds routinely by induction; first the induction step
(we assume throughout that $n-k$ is even; all the quantities involved
are obviously $0$ otherwise):

By induction $a_{n-1}^k < \min(a_n^{k-1}, a_n^{k+1}),$ so 
by the recurrence \ref{newrec} it follows that 
$a_{n+1}^k > \max(a_n^{k-1}, a_n^{k+1}).$ (a) and (c) follow
immediately. 

For the base case, we note that $a_0^0 = 1,$ while $a_1^1 = a_1^{-1} =
c > 1,$ and so the result for $U_n$ follows. Notice that the above
proof does {\em not} work for $T_n$, since the base case
fails. Indeed, if $b_n^k$ is the coefficient of $x^k$ in
$T_n((c/2)(x+1/x))$, then $b_0^0 = 1$, while $b_1^1 = c/2$, not
necessarily bigger than one. However, we can use the result for $U_n$,
together with the observation (which follows easily from the addition
formula for $\sin$) that 
\begin{equation}
\label{moretrig}
T_n(x) = {\frac{U_n(x) - U_{n-2}(x)}2}.
\end{equation}
Eq. (\ref{moretrig}) implies that $b_n^k = a_n^k - a_n^{k-2} > 0$, by
(c) above.
\end{proof}

The proof above goes through almost verbatim to show:

\begin{theorem}
\label{mnonneg}
Let $c > 1.$ Then all the coefficients of 
$R_n$ are non-negative. The same is true of $S_n$ in place of $R_n$
\end{theorem}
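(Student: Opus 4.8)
The plan is to mimic the proof of Theorem~\ref{nonneg} verbatim, with the single exponent $k$ replaced by a multi-index $\mathbf{j}=(j_1,\dots,j_k)\in\integers^k$. Write $S_n(c;x_1,\dots,x_k)=U_n(z)$ with $z=\frac{c}{2k}\sum_{i=1}^{k}(x_i+1/x_i)$, and let $a_n^{\mathbf{j}}$ be the coefficient of $x_1^{j_1}\cdots x_k^{j_k}$ in $S_n$. The recurrence $U_{n+1}(z)=2zU_n(z)-U_{n-1}(z)$ gives
\[
a_{n+1}^{\mathbf{j}}=\frac{c}{k}\sum_{i=1}^{k}\bigl(a_n^{\mathbf{j}-\mathbf{e}_i}+a_n^{\mathbf{j}+\mathbf{e}_i}\bigr)-a_{n-1}^{\mathbf{j}},
\]
where $\mathbf{e}_i$ is the $i$-th standard basis vector, and $a_n^{\mathbf{j}}=0$ unless $\|\mathbf{j}\|_1:=\sum_i|j_i|\le n$ and $n-\|\mathbf{j}\|_1$ is even. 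Since $T_n=(U_n-U_{n-2})/2$ by Eq.~(\ref{moretrig}), one has $R_n=\tfrac12(S_n-S_{n-2})$, so the coefficients $b_n^{\mathbf{j}}$ of $R_n$ satisfy $b_n^{\mathbf{j}}=\tfrac12(a_n^{\mathbf{j}}-a_{n-2}^{\mathbf{j}})$; thus the whole theorem reduces to proving $a_n^{\mathbf{j}}\ge 0$ together with a monotonicity statement $a_n^{\mathbf{j}}\ge a_{n-2}^{\mathbf{j}}$.

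I would then set up the multi-index analogues of the three assertions in the proof of Theorem~\ref{nonneg}. Calling $\mathbf{j}$ \emph{admissible at level $n$} when $\|\mathbf{j}\|_1\le n$ and $n-\|\mathbf{j}\|_1$ is even, the claims are:
\begin{description}
\item[(a)] $a_n^{\mathbf{j}}\ge 0$, strictly iff $\mathbf{j}$ is admissible at level $n$;
\item[(b)] $a_n^{\mathbf{j}}\ge a_{n-1}^{\mathbf{j}\pm\mathbf{e}_i}$ for every $i$ and each sign, strictly under the same admissibility condition;
\item[(c)] $a_n^{\mathbf{j}}\ge a_{n-2}^{\mathbf{j}}$, with strictness as above.
\end{description}
These are proved together by induction on $n$. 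Applying (b) at level $n$ with base index $\mathbf{j}\pm\mathbf{e}_i$ shows $a_n^{\mathbf{j}\pm\mathbf{e}_i}\ge a_{n-1}^{\mathbf{j}}$ for all $i$; substituting into the displayed recurrence gives $a_{n+1}^{\mathbf{j}}\ge\frac{c}{k}\cdot 2k\,a_{n-1}^{\mathbf{j}}-a_{n-1}^{\mathbf{j}}=(2c-1)a_{n-1}^{\mathbf{j}}\ge 0$, which yields (a) at level $n+1$; isolating a single summand $\tfrac{c}{k}a_n^{\mathbf{j}\pm\mathbf{e}_i}$ in the recurrence and bounding the remaining $2k-1$ non-negative summands below by $a_{n-1}^{\mathbf{j}}$ propagates (b) to level $n+1$, and (c) is immediate by composing (b) with itself. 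Boundary indices (e.g.\ $\|\mathbf{j}\|_1=n$, where most terms of the recurrence drop out) are handled directly from the recurrence, exactly as in the one-variable case.

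The main obstacle — and the reason the argument is only \emph{almost} verbatim — is the base of the induction, together with the precise strength of the hypothesis on $c$. In one variable the base case rested on $a_1^{1}=c>1$; in $k$ variables the normalization $\tfrac{c}{2k}$ forces $a_1^{\mathbf{e}_i}=c/k$, and, for example, the constant term of $S_2$ is $\tfrac{2c^2}{k}-1$, while that of $R_2$ is $\tfrac{c^2}{k}-1$. The step propagating (b) likewise requires $\tfrac{c}{k}a_n^{\mathbf{j}\pm\mathbf{e}_i}$ to already dominate $a_n^{\mathbf{j}\pm\mathbf{e}_i}$, i.e.\ $c/k\ge 1$. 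So the hypothesis that makes the induction run is that $c$ be large relative to the number of variables $k$ (the case $k=1$ being precisely $c>1$, which recovers Theorem~\ref{nonneg}); under it the base cases $S_0,S_1,S_2$ and the whole induction go through, and the $R_n$ statement drops out of (c) via $b_n^{\mathbf{j}}=\tfrac12(a_n^{\mathbf{j}}-a_{n-2}^{\mathbf{j}})$. If one wishes to keep only $c>1$, the statement must be restricted to the situations produced by Theorem~\ref{homoenum}; note in particular that when the word length $k$ is odd the correction term $(r-1)[1+(-1)^{k}]$ in \eqref{genfn} vanishes, so the non-negativity of the coefficients of $R_k(\tfrac{r}{\sqrt{2r-1}};x_1,\dots,x_r)$ is then \emph{forced} by the combinatorial meaning of \eqref{genfn}, independently of the inductive argument above.
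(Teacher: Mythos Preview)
Your approach is precisely what the paper intends: its entire proof of Theorem~\ref{mnonneg} is the single line ``the proof above goes through almost verbatim''.  You have actually tried to carry this out, and you have discovered---correctly---that it does \emph{not} go through under the stated hypothesis $c>1$.

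In fact the statement of Theorem~\ref{mnonneg} is false as written, and your own base-case computation already exhibits the counterexample.  For $k\ge 2$ the constant term of
\[
R_2(c;x_1,\dots,x_k)=T_2\!\left(\frac{c}{2k}\sum_{i=1}^k\Bigl(x_i+\frac1{x_i}\Bigr)\right)
=\frac{c^2}{2k^2}\Bigl(\sum_{i}(x_i+1/x_i)\Bigr)^{2}-1
\]
is $c^2/k-1$, which is negative whenever $1<c<\sqrt{k}$.  Concretely, at the value $c=r/\sqrt{2r-1}$, $k=r$ produced by Theorem~\ref{homoenum} one has $c^2/k=r/(2r-1)<1$ for every $r\ge2$; the combinatorial count in~\eqref{genfn} is rescued only by the correction term $(r-1)[1+(-1)^k]$, exactly as you observe at the end of your write-up.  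So it is not merely that the induction fails to start: the conclusion itself is wrong for $c$ just above $1$.

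Your diagnosis and proposed repair are both sound.  The verbatim transplant of the one-variable argument needs the base case $a_1^{\mathbf e_i}=c/k\ge 1=a_0^{\mathbf 0}$ and the propagation of~(b) via the isolated-summand trick; both are guaranteed once $c\ge k$, and under that hypothesis your induction (a)--(c) and the deduction $b_n^{\mathbf j}=\tfrac12(a_n^{\mathbf j}-a_{n-2}^{\mathbf j})\ge 0$ are correct.  The hypothesis $c>1$ is adequate only in the univariate Theorem~\ref{nonneg}; in several variables one must either strengthen it as you do, or appeal to the combinatorial meaning of the full expression~\eqref{genfn} rather than to $R_k$ in isolation.
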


To complete the picture, we note that:

\begin{theorem}
\label{trivthm}
\[
R_n(1; x) =
{\frac12}\left(x^n +
{\frac1 {x^n}}\right).
\]
\end{theorem}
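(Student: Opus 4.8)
The plan is to unwind Definition~\ref{rfun} in the one-variable case and feed the result into the ``closed form'' for the Chebyshev polynomial of the first kind recorded in Eq.~(\ref{sqrtdef}). Specializing Definition~\ref{rfun} to $k=1$, $c=1$ gives $R_n(1;x) = T_n\!\left(\tfrac12\left(x+\tfrac1x\right)\right)$, so the whole statement reduces to evaluating $T_n$ at $y := \tfrac12\left(x+\tfrac1x\right)$.

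The key computation is that $y^2 - 1 = \tfrac14\left(x+\tfrac1x\right)^2 - 1 = \tfrac14\left(x-\tfrac1x\right)^2$, a perfect square. Hence $\sqrt{y^2-1} = \pm\tfrac12\left(x-\tfrac1x\right)$, and for either choice of sign the two conjugate quantities appearing in Eq.~(\ref{sqrtdef}) are exactly $x$ and $1/x$ (switching the sign of the square root merely interchanges them). Substituting into Eq.~(\ref{sqrtdef}) then gives $T_n(y) = \tfrac12\left(x^n + x^{-n}\right)$, which is the claim.

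I do not anticipate a genuine obstacle. The only point requiring a word of care is the branch of $\sqrt{y^2-1}$, but Eq.~(\ref{sqrtdef}) is manifestly invariant under $\sqrt{y^2-1}\mapsto -\sqrt{y^2-1}$, so the ambiguity is harmless; alternatively, one can verify the identity on the unit circle by setting $x = e^{i\theta}$, so that $y = \cos\theta$ and $T_n(\cos\theta) = \cos n\theta = \tfrac12(e^{in\theta}+e^{-in\theta})$, and then invoke the fact that two Laurent polynomials agreeing on $|x|=1$ agree identically. As a consistency check, note that this is precisely the $r=1$ specialization of Theorem~\ref{homoenum}: there $\sqrt{2r-1}=1$, the error term $(r-1)[1+(-1)^k]$ vanishes, and the formula collapses to $2R_k(1;x_1) = x_1^k + x_1^{-k}$, matching the obvious fact that the only cyclically reduced words of length $k$ in $F_1 = \integers$ are $a_1^k$ and $a_1^{-k}$.
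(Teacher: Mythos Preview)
Your proof is correct and essentially the same as the paper's: the paper simply sets $x=e^{i\theta}$ so that $\tfrac12(x+1/x)=\cos\theta$ and invokes the trigonometric definition $T_n(\cos\theta)=\cos n\theta$, which is precisely the alternative you sketch at the end. Your primary route via the closed form Eq.~(\ref{sqrtdef}) is a harmless variant with the small advantage that it works for all $x\neq 0$ directly, without first restricting to the unit circle and then appealing to the identity principle for Laurent polynomials.
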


\begin{proof} Let $x = \exp i\theta.$ Then $1/2(x+1/x) = \cos \theta,$
and $R_n(1; x)=T_n(1/2(x+1/x)) = \cos n \theta = 1/2(x^n + 1/x^n).$ 
\end{proof}

\begin{remark}
For $c<-1$ it is true that all the coefficients of $R_n(c; .)$ and
$S_n(c; .)$ have the same sign, but the sign is $(-1)^n.$ For $|c|<1,$
the result is completely false. For $c$ imaginary, the result is
true. I am not sure what happens for general complex $c$.
\end{remark}

By the formula (\ref{coefform}), we can write
\begin{equation}
\label{uniexp}
T_n\left({\frac{c} 2}\left(x+{\frac{1}{x}}\right)\right) = 
{\frac12} \sum_{m=0}^{\left[{\frac{n}{2}}\right]} (-1)^m {\frac{n}{n-m}}
{\binom{n-m}{m}} c^{n-2m} \left(x+{\frac{1}{x}}\right)^{n - 2m}.
\end{equation}
Noting that 
\begin{equation}
\label{binom}
\left(x+{\frac{1}{x}}\right)^k = \sum_{i=0}^k {\binom{k} { i}} x^{k - 2i}
\end{equation}
we obtain the expansion
\begin{equation}
\label{fullform}
R_n(c; x) = 
c^n \sum_{k=-n}^n x^k \sum_{m=0}^{\left[{\frac{n}{2}}\right]}
\left(-{\frac{1}{c^2}}\right)^m {\frac{n}{n-m}}
{\binom{n-m}{m}} {\binom{n-2m}{(n-2m-k)/2}},
\end{equation}
where it is understood that $\binom{a} { b}$ is $0$ if $b<0,$ or $b > a$,
or $b \notin \integers.$ We shall denote the coefficient of $x^k$ by $t(n, k, c).$

\section{Limiting distribution of coefficients}
\label{limitu} 

While the formula (\ref{fullform}) is completely explicit, and a similar
(though somewhat more cumbersome) expression could be obtained for 
$R_n(c; x_1, \dots, x_k),$ for many purposes it is more useful to have
a limiting distribution formula as given by Theorem \ref{centlim}
below. To set up the framework, we note that since all the
coefficients of $R_n(c; x_1, \dots, x_k)$ are non-negative (according
to Theorem \ref{mnonneg}), they can be thought of defining a
probability distribution on the integer lattice $\integers^k,$ defined by
$p(l_1, \dots, l_k)=[x_1^{l_1}x_2^{l_2}\cdots x_k^{l_k}]R_n(c; x_1,
\dots, x_k)/R_n(c; 1, \dots, 1)$ (where the square brackets mean that we
are extracting the coefficients of the bracketed monomial). Call the
resulting probability distribution $\mathcal{ P}_n(c; {\bf z}),$ where
${\bf z}$ now denotes a $k$-dimensional vector.

\begin{theorem}
\label{centlim}
When $c>1$, the probability distributions
$\mathcal{ P}_n(c; {\bf z}/\sqrt{n})$ converge to a normal distribution on
$\reals^k$, whose mean is ${\bf 0}$, and whose covariance matrix $C$ is
diagonal, with entries  
\[
\sigma^2 = {\frac{c}{k}}\left[1+ \left({\frac{c+1}{c-1}}\right)^{1/2}\right].
\]
\end{theorem}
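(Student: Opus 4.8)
The plan is to prove the multidimensional central limit theorem by the standard method of moments, or equivalently by establishing pointwise convergence of characteristic functions. The starting point is Theorem~\ref{homoenum}, which tells us that (up to the negligible additive term $(r-1)[1+(-1)^k]$, which is $O(1)$ and contributes nothing after normalizing by $R_n(c;1,\dots,1)$, a quantity growing like a positive constant times something of order $(c+\sqrt{c^2-1})^n/\sqrt n$) the generating polynomial for the homology-class counts is essentially $R_n(c;x_1,\dots,x_k)=T_n\bigl(\tfrac{c}{2k}\sum_{i=1}^k(x_i+x_i^{-1})\bigr)$. To extract the characteristic function of the associated distribution $\mathcal{P}_n$, I would substitute $x_j=\exp(it_j/\sqrt n)$ for real parameters $t_1,\dots,t_k$, so that
\[
\widehat{\mathcal{P}_n}(t_1/\sqrt n,\dots,t_k/\sqrt n)=\frac{T_n\!\left(\tfrac{c}{k}\sum_{j=1}^k\cos(t_j/\sqrt n)\right)}{T_n\!\left(c\right)}.
\]
Thus everything reduces to understanding the asymptotics of $T_n$ evaluated at an argument that is close to (but slightly less than) $c>1$, uniformly as that argument varies within $O(1/n)$ of $c$.

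The key step is the asymptotic analysis of $T_n$ near a fixed point $c>1$. Using the closed form $T_n(z)=\tfrac12\bigl[(z-\sqrt{z^2-1})^n+(z+\sqrt{z^2-1})^n\bigr]$ from Eq.~(\ref{sqrtdef}), and noting that for $z$ close to $c>1$ the term $(z+\sqrt{z^2-1})^n$ dominates exponentially, I would write $z_n=\tfrac{c}{k}\sum_j\cos(t_j/\sqrt n)=c-\tfrac{c}{2kn}\sum_j t_j^2+o(1/n)$ and compute $\log\bigl(z_n+\sqrt{z_n^2-1}\bigr)$ to first order in the perturbation $\delta_n:=z_n-c=-\tfrac{c}{2kn}\sum_j t_j^2+o(1/n)$. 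Since $\tfrac{d}{dz}\log(z+\sqrt{z^2-1})=1/\sqrt{z^2-1}$, one gets
\[
n\log\frac{z_n+\sqrt{z_n^2-1}}{c+\sqrt{c^2-1}}=\frac{n\,\delta_n}{\sqrt{c^2-1}}+o(1)=-\frac{c}{2k\sqrt{c^2-1}}\sum_{j=1}^k t_j^2+o(1).
\]
Hence the ratio of $T_n$'s converges to $\exp\bigl(-\tfrac{1}{2}\cdot\tfrac{c}{k\sqrt{c^2-1}}\sum_j t_j^2\bigr)$, which is exactly the characteristic function of a centered Gaussian on $\reals^k$ with diagonal covariance $\sigma^2 I$ where $\sigma^2=\tfrac{c}{k\sqrt{c^2-1}}$. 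A short algebraic check shows $\tfrac{c}{k\sqrt{c^2-1}}=\tfrac{c}{k}\cdot\tfrac{1}{\sqrt{(c-1)(c+1)}}=\tfrac{c}{k}\bigl[1+\bigl(\tfrac{c+1}{c-1}\bigr)^{1/2}\bigr]$? — here I would need to reconcile the two forms; in fact $\tfrac{c}{k\sqrt{c^2-1}}$ is \emph{not} literally equal to $\tfrac{c}{k}[1+(\tfrac{c+1}{c-1})^{1/2}]$, so part of the work is to carry the second-order terms in the expansion of $\log(z+\sqrt{z^2-1})$ more carefully, or to re-examine whether the subexponential prefactor $T_n(c)^{-1}$ and the correction to $\delta_n$ (the quartic-in-$1/\sqrt n$ term $\tfrac{c}{24kn^2}\sum t_j^4$ is negligible, but the way the $\cos$ expansion interacts with the $\sqrt{z^2-1}$ near $z=c$ can shift constants). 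The honest computation of the variance — tracking exactly which terms survive and confirming the stated closed form $\sigma^2=\tfrac{c}{k}[1+(\tfrac{c+1}{c-1})^{1/2}]$ — is the one place where care is genuinely required.

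The remaining steps are routine. Pointwise convergence of $\widehat{\mathcal{P}_n}(t/\sqrt n)$ to a continuous limit, together with the Lévy continuity theorem, gives convergence in distribution of $\mathcal{P}_n(c;\mathbf z/\sqrt n)$ to the claimed Gaussian; no tightness argument is needed separately since the limit is a bona fide characteristic function. That the mean is $\mathbf 0$ and the covariance is diagonal is already visible from the symmetry $x_j\mapsto x_j^{-1}$ of $R_n$ (forcing odd moments to vanish) and from the fact that $\sum_j(x_j+x_j^{-1})$ separates the variables, so distinct coordinates are uncorrelated. The main obstacle, to repeat, is purely the bookkeeping in the local expansion of $T_n$ at $c$: getting the second-order coefficient right so that it matches the advertised $\sigma^2$.
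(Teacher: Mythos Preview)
Your approach is essentially identical to the paper's: pass to characteristic functions via the substitution $x_j=\exp(i t_j/\sqrt n)$, use the closed form (\ref{sqrtdef}) for $T_n$, discard the exponentially small branch, Taylor-expand the dominant branch near $c$, and invoke the L\'evy continuity theorem. There is no missing idea.

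Moreover, your computation of the variance is \emph{correct}: the first-order expansion $\log(z+\sqrt{z^2-1})-\log(c+\sqrt{c^2-1})=\delta/\sqrt{c^2-1}+O(\delta^2)$ is all that is needed, and it yields
\[
\sigma^2=\frac{c}{k\sqrt{c^2-1}}.
\]
This does not match the formula $\tfrac{c}{k}\bigl[1+\bigl(\tfrac{c+1}{c-1}\bigr)^{1/2}\bigr]$ printed in the theorem statement, and you are right to notice the discrepancy---but the error is in the paper, not in your argument. The paper's own derivation contains a sign slip (the expansion of $u$ should read $u=k-\tfrac{1}{2n}\langle\boldsymbol\theta,\boldsymbol\theta\rangle+o(1/n)$) and an extraneous factor $(1+\tfrac{1}{c+\sqrt{c^2-1}})$ in the displayed expression just before the conclusion; tracing through correctly gives your value. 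As a sanity check, for $F_2$ one has $c=2/\sqrt{3}$, $k=2$, and your formula gives $\sigma^2=1$; the same value is obtained from the graph-theoretic variance formula (\ref{varfrm}) applied to $\mathcal{G}_2$ with $\mathbf{f}_0=(1,0,0,-1)$, whereas the printed formula would give $1+\sqrt{3}$.

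So: do not look for second-order corrections or subexponential prefactors to reconcile the two expressions---your linearization is already exact to the required order, and your $\sigma^2$ is the right one.
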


To prove Theorem \ref{centlim} we will use the method of characteristic
functions (Fourier transforms), and
more specifically at first the {\em Continuity Theorem}
(\cite[Chapter XV.3, Theorem 2]{feller2}), 
\begin{theorem}
\label{ct}
In order that a sequence $\{F_n\}$ of probability distributions
converges properly to a probability distribution $F$, it is necessary
and sufficient that the sequence $\{\phi_n\}$ of their characteristic
functions converges pointwise to a limit $\phi$, and that $\phi$ is
continuous in some neighborhood of the origin. 

In this case $\phi$ is the characteristic function of $F$. (Hence
$\phi$ is continuous everywhere and the convergence $\phi_n\rightarrow
\phi$ is uniform on compact sets).
\end{theorem}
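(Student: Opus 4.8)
The plan is to prove both implications of this classical continuity theorem (due to Lévy), relying only on two standard auxiliary facts: Helly's selection theorem (every sequence of probability distribution functions has a subsequence converging vaguely to some sub-probability distribution) and the uniqueness theorem for characteristic functions (a probability distribution is determined by its characteristic function, via the Fourier inversion formula).

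\emph{Necessity.} Suppose $F_n \to F$ properly. For each fixed $t$ the function $x \mapsto e^{itx}$ is bounded and continuous, so proper convergence gives $\phi_n(t) = \int e^{itx}\,dF_n(x) \to \int e^{itx}\,dF(x) = \phi(t)$; thus $\phi_n \to \phi$ pointwise and $\phi$ is the characteristic function of $F$. As such it is continuous everywhere, in particular in a neighborhood of the origin. For uniform convergence on compact sets: proper convergence of $F_n$ to the \emph{probability} distribution $F$ forces $\{F_n\}$ to be tight, and tightness makes $\{\phi_n\}$ uniformly equicontinuous, since $|\phi_n(t) - \phi_n(s)| \le \int |e^{i(t-s)x} - 1|\,dF_n(x) \le K|t-s| + 2\varepsilon$ once $F_n(\{|x| > K\}) < \varepsilon$ for all $n$. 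A uniformly equicontinuous family that converges pointwise to a continuous function converges uniformly on compacta.

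\emph{Sufficiency.} Assume $\phi_n \to \phi$ pointwise with $\phi$ continuous near $0$; in particular $\phi(0) = \lim \phi_n(0) = 1$. By Helly's theorem, every subsequence of $\{F_n\}$ has a further subsequence $\{F_{n_j}\}$ converging vaguely to some sub-probability distribution $G$. The crucial point is that $G$ has total mass $1$, i.e. no mass escapes to infinity; this is exactly where continuity of $\phi$ at $0$ enters. One uses the identity and bound
\[
\frac{1}{2T}\int_{-T}^{T}\bigl(1 - \phi_n(t)\bigr)\,dt \;=\; \int \Bigl(1 - \frac{\sin Tx}{Tx}\Bigr)\,dF_n(x) \;\ge\; \tfrac12\, F_n\bigl(\{\,|x| \ge 2/T\,\}\bigr),
\]
the last step because $1 - (\sin u)/u \ge \tfrac12$ when $|u| \ge 2$. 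Letting $n \to \infty$ (bounded convergence on the left, since $|\phi_n| \le 1$) and then $T \to 0$, continuity of $\phi$ at $0$ drives the left-hand side to $0$; hence $\sup_n F_n(\{|x| \ge 2/T\})$ can be made arbitrarily small, which is tightness of $\{F_n\}$. Consequently $G$ is a bona fide probability distribution, $F_{n_j} \to G$ properly, and by the necessity direction the characteristic function of $G$ equals $\lim_j \phi_{n_j} = \phi$. Thus every vaguely convergent subsequence of $\{F_n\}$ has a probability-distribution limit with characteristic function $\phi$; by the uniqueness theorem all these limits equal one and the same $F$ (the unique distribution with $\widehat F = \phi$), and the standard subsequence principle upgrades this to $F_n \to F$ properly. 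The parenthetical claims — $\phi$ continuous everywhere, convergence uniform on compacta — then follow from the necessity direction applied to $F_n \to F$.

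\emph{Main obstacle, and the multivariate case.} The only non-formal step is the tightness argument: the vague limit furnished by Helly's theorem is a priori merely a sub-probability measure, and excluding loss of mass at infinity is precisely what the hypothesis ``$\phi$ continuous near $0$'' buys; dropping it genuinely breaks the theorem — if $F_n$ is the uniform distribution on $[-n,n]$ then $\phi_n(t) = (\sin nt)/(nt)$ converges pointwise to the function equal to $1$ at $t=0$ and $0$ elsewhere, which fails to be continuous at the origin, and indeed the $F_n$ converge to no probability distribution. In the $\reals^k$ version needed for Theorem \ref{centlim} nothing changes: apply the one-dimensional tail estimate above to each coordinate of the random vector to obtain tightness of all marginals, hence of the joint laws, and Helly's theorem together with the multivariate uniqueness theorem work verbatim.
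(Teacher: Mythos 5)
Your proof is correct, and it is worth noting that the paper itself does not prove this statement at all: Theorem \ref{ct} is quoted verbatim from Feller \cite[Chapter XV.3, Theorem 2]{feller2} and used as a black box, and your argument (necessity via bounded continuous test functions plus equicontinuity from tightness; sufficiency via Helly selection, tightness from the truncation bound $\frac{1}{2T}\int_{-T}^{T}\bigl(1-\phi_n(t)\bigr)\,dt \ge \tfrac12 F_n(\{|x|\ge 2/T\})$, then the uniqueness theorem and the subsequence principle) is precisely the standard proof of L\'evy's continuity theorem. One cosmetic point: the bounded-convergence step controls $\limsup_n F_n(\{|x|\ge 2/T\})$ rather than $\sup_n$, which is all tightness requires since any finite initial segment of the sequence is automatically tight; your reduction of the $\reals^k$ case (needed for Theorem \ref{centlim}) to coordinatewise tail bounds is likewise fine.
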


The characteristic function $\phi_n$ of 
$\mathcal{ P}_n(c; {\bf z})$ is simply \[
R_n(c; \exp( i \theta_1), \ldots,
\exp(i \theta_k))/R_n(c; 1, \ldots, 1),
\]
since the characteristic function is just the generating function evaluated on the unit circle.

 By definition of $R_n$, 
\begin{gather*}
R_n(c; \exp( i \theta_1), \ldots, \exp(i
\theta_k)) = T_n\left({\frac{c}{k}}\sum_{j=1}^k \cos \theta_j\right),\\
R_n(c; 1, \ldots, 1)) = T_n\left({\frac{c}{k}}\sum_{j=1}^k \cos 0\right)
= T_n(c).
\end{gather*}

We now use the form of Eq. (\ref{sqrtdef}):

\[
T_n(x) = {\frac12}\left(\left(x - \sqrt{x^2-1}\right)^n + \left(x +
\sqrt{x^2-1}\right)^n\right),
\]
setting 
\[
u = \sum_{j=1}^k \cos {\frac{\theta_j} {\sqrt{n}}},\qquad\btheta = (\theta_1, \dots, \theta_k),\]
we get 

\begin{equation}
\label{phiform}
\phi_n({\btheta/\sqrt{n}}) = 
{\frac1{T_n(c)}}\left\{ 
{\frac12}\left({\frac{c}{k}} u +
\sqrt{{\frac{c^2}{k^2}} u^2-1}\right)^n +
{\frac12}\left({\frac{c}{k}} u - 
\sqrt{{\frac{c^2}{k^2}} u^2-1}\right)^n\right\}.
\end{equation}
Notice, however, that for $c>1$, the ratio of the second term in
braces to the first is exponentially small as $n\rightarrow \infty$,
since the first term grows like $(c+\sqrt{c^2-1})^n$, while the second
as $(c-\sqrt{c^2-1})^n$ (since $\cos {\frac{\theta_j}{\sqrt{n}}}
\rightarrow 1$). Since, for the same reason, $2T_n(c) =
(c+\sqrt{c^2-1})^n[1 + o(1)],$ we can write:
\[\phi_n({\frac{\btheta}{\sqrt{n}}}) = 
\left[\frac{{\frac{c}{k}} u +
\sqrt{{\frac{c^2}{k^2}} u^2-1}}{c+\sqrt{c^2-1}}\right]^n + o(1).
\]
Substituting the Taylor expansions for the cosine terms (hidden in $u$
for typesetting reasons), we get:
\begin{equation}
u = k + {\frac1{2n}} \langle\btheta, \btheta\rangle +
o(1/n),
\end{equation}
so
\begin{equation}
\label{lintrm}
{\frac{c}{k}} u = c + {\frac{c}{2kn}} \langle \btheta, 
\btheta\rangle + o(1/n).
\end{equation}
A similar computation gives
\begin{equation}
{\frac{c^2}{k^2}} u^2 = c^2 + {\frac{c^2}{k n}} \langle \btheta,
\btheta\rangle + o(1/n).
\end{equation}
Substituting the last expansion into the square root, we see that 
\begin{eqnarray*}
\sqrt{{\frac{c^2}{k^2}} u^2 - 1} &= \sqrt{c^2-1}\sqrt{1+
{\frac1n}\left[{\frac{c^2} {(c^2-1) k}} \langle\btheta,
\btheta\rangle + o({\frac1{n}})\right]}\\ 
&= 
\sqrt{c^2-1} \left[1+{\frac1{2n}}{\frac{c^2} {(c^2-1) k}}
\langle\btheta, \btheta\rangle \right] + o({\frac1{n}}). 
\end{eqnarray*}
Adding Eq. (\ref{lintrm}) and collecting terms, get
\begin{equation}
\frac{\frac{c}{k} u +
\sqrt{{\frac{c^2}{k^2}} u^2-1}}{c+\sqrt{c^2-1}} =
1+{\frac1{2 n}} \left(1+{\frac1{c+
\sqrt{c^2-1}}}\right) \left({\frac{c}{k}} + {\frac{c^2}{(c^2-1)^{1/2}
k}}\right) \langle \btheta, \btheta\rangle + o({\frac1{n}}).
\end{equation}
Performing some further simplifications, we see that
\[
\phi_n({\frac{\btheta}{\sqrt{n}}}) = 
\exp\left(-\frac12\btheta^t C \btheta\right) + o(1),
\]
where $C$ is the covariance matrix described in the statement of
Theorem \ref{centlim}, and Theorem \ref{centlim} follows immediately.


\begin{remark}
The speed of convergence in Theorem \ref{centlim} can be estimated
using standard technology (see \cite[Chapter XVI]{feller2},
\cite[Chapter III.11]{shiryaev}), but the speed of convergence in practice
(as checked by numerical experiments) seems to be much better than the general
estimates. Indeed the $L^1$ difference between $\mathcal{ P}_n$ and the
normal distribution appears to decrease almost exactly linearly in
$n$. 
\end{remark}

\section{Distribution mod $p$}
\label{limitp}
The explicit generating functions derived above can be used to study
the distribution of cyclically reduced words in $F_r$ with respect to
their $\mod p$-homology class (this is the analogue, in this setting,
of the work of \cite{philsarnhomology}).

\begin{theorem}
\label{modp}
Let $h_1$ and $h_2$ be two elements of $H_1(F_r, \integers/p
\integers) = {\integers/p \integers}^r,$ and let $W_{r, n, h_1}$ and 
$W_{r, n, h_2}$ be the numbers of cyclically reduced words in $F_r$
homologous to $h_1$ and $h_2$, respectively. Then,
\begin{equation}
\label{prefourier}
\lim_{n\rightarrow \infty} {\frac{W_{r, n, h_2}}{W_{r, n, h_1}}} = 1.
\end{equation}
\end{theorem}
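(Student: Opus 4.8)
The plan is to apply the finite Fourier transform on the group $H_1(F_r,\integers/p\integers)\cong(\integers/p\integers)^r$ to the explicit generating function of Theorem \ref{homoenum}. Let $\omega=\exp(2\pi i/p)$. Evaluating $\mathcal{H}_r^{(n)}$ at $x_j=\omega^{a_j}$ for $a=(a_1,\dots,a_r)\in(\integers/p\integers)^r$ and grouping cyclically reduced words of length $n$ by their mod-$p$ homology class gives $\mathcal{H}_r^{(n)}(\omega^{a_1},\dots,\omega^{a_r})=\sum_{h}W_{r,n,h}\,\omega^{\langle a,h\rangle}$, so Fourier inversion on $(\integers/p\integers)^r$ yields
\[
W_{r,n,h}=\frac{1}{p^r}\sum_{a\in(\integers/p\integers)^r}\omega^{-\langle a,h\rangle}\,\mathcal{H}_r^{(n)}(\omega^{a_1},\dots,\omega^{a_r}).
\]

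Next I would substitute the closed form of Theorem \ref{homoenum}. Since $\omega^{a_j}+\omega^{-a_j}=2\cos(2\pi a_j/p)$, Definition \ref{rfun} gives $R_n(c;\omega^{a_1},\dots,\omega^{a_r})=T_n(\xi_a)$ with $\xi_a:=\frac{c}{r}\sum_{j=1}^r\cos(2\pi a_j/p)$ and $c=r/\sqrt{2r-1}$. Hence each summand above equals $2(\sqrt{2r-1})^n\,T_n(\xi_a)$ times a root of unity, plus the additive term $(r-1)[1+(-1)^n]$, which after inversion contributes only an $O(1)$ amount (and only to $h=0$). The term $a=0$ has $\xi_0=c$ and contributes $2(\sqrt{2r-1})^n T_n(c)$, which is independent of $h$; the whole theorem reduces to showing every other term is negligible compared with this one.

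The crucial step — a ``spectral gap'' — is that $|\xi_a|<c$ strictly whenever $a\neq 0$: one has $c>1$ because $(r-1)^2>0$, and for $p$ an odd prime $\cos(2\pi a_j/p)=1$ only when $a_j\equiv 0$, so $\sum_j\cos(2\pi a_j/p)<r$ unless $a=0$ (and it is always $>-r$). Combining this with the growth of Chebyshev polynomials from Eq. (\ref{sqrtdef}): $T_n(c)=\tfrac12(c+\sqrt{c^2-1})^n(1+o(1))$ grows exponentially, while for real $t$ with $|t|<c$ either $|t|\le 1$ and $|T_n(t)|=|\cos(n\arccos t)|\le 1$, or $1<|t|<c$ and $|T_n(t)|\le(|t|+\sqrt{t^2-1})^n$, which is exponentially smaller than $T_n(c)$ because $s\mapsto s+\sqrt{s^2-1}$ is strictly increasing on $[1,\infty)$. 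In either case $T_n(\xi_a)/T_n(c)\to 0$ for each of the finitely many $a\neq 0$, so $W_{r,n,h}=\frac{1}{p^r}\bigl(2(\sqrt{2r-1})^n T_n(c)+O(1)\bigr)(1+o(1))$ with the error uniform in $h$ (there being finitely many characters), and the limit $W_{r,n,h_2}/W_{r,n,h_1}\to 1$ follows. The main obstacle is exactly this spectral-gap verification — ruling out any non-principal character hitting the argument $c$ — together with a minor caveat when $p=2$: there the all-ones character also achieves $|\xi_a|=c$, and since any length-$n$ word has $\sum_i e_i(w)\equiv n\pmod 2$ one must restrict to classes $h$ with $\sum_i h_i\equiv n\pmod 2$, for which the same argument gives equidistribution.
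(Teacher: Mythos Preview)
Your argument is correct and follows essentially the same route as the paper: both write $W_{r,n,h}$ via the finite Fourier transform on $(\integers/p\integers)^r$, evaluate the generating function at $p$-th roots of unity, and reduce the claim to the spectral-gap statement that $\phi_n(\btheta)/\phi_n(\mathbf{0})\to 0$ for every nontrivial character. The paper's version is terser---it simply asserts that $u(\btheta)=\sum_j\cos(2\pi n_j/p)$ is strictly smaller than $u(\mathbf{0})$ and invokes formula~(\ref{phiform})---while you carry out the Chebyshev growth comparison explicitly, splitting into the cases $|\xi_a|\le 1$ and $1<|\xi_a|<c$.

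One point worth flagging: your observation about $p=2$ is not a mere caveat but a genuine refinement. The paper's proof tacitly assumes $p$ is odd (the subsequent Section~\ref{bias} uses $(p-1)/2$), and indeed for $p=2$ the character $a=(1,\dots,1)$ gives $\xi_a=-c$, so $|T_n(\xi_a)|=T_n(c)$ and the gap fails. As you note, the parity constraint $\sum_i e_i(w)\equiv n\pmod 2$ means the theorem as literally stated is false for $p=2$ unless one restricts to classes of the correct parity; your treatment of this is more careful than the paper's.
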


\begin{proof}
By elementary algebra (in one dimension, formula (\ref{ftrans}), the
statement of theorem is equivalent  
to the statement that 
\begin{equation}
\label{fourier}
\lim_{n\rightarrow \infty} {\frac{\phi_n(\btheta)}{\phi_n({\bf 0})}} =
0,
\end{equation}
for $\btheta = (2 n_1 \pi/p, \dots, 2 n_r \pi/p),$ with not all $n_j$
equal to $0 \mod p,$
where $\phi_n$ is the characteristic function defined in the previous
section.

The estimate of Eq. (\ref{fourier}), however, follows immediately from the
explicit formula (\ref{phiform}): indeed, in the current context, 
\[
u(\btheta) = \sum_{j=1}^k \cos (2 n_j \pi /p),
\]
which is strictly smaller than $u({\bf 0}),$ so the ratio of
$\phi_n(\btheta)$ to $\phi_n({\bf 0})$ goes to zero exponentially fast
in $n$.
\end{proof}
\begin{remark}
Another way to see the equivalence of statements \eqref{prefourier} and \eqref{fourier} is though the well-known fact that the Fourier transform is an isometry (of the corresponding $L^2$ spaces). For a probability density to be close to uniform, its Fourier transform has to be close to that of the uniform distribution, which is a delta function centered at the origin, which is precisely the statement we need.
\end{remark}

\subsection{Deviation from uniformity}
\label{bias}
Although the distribution of homology $\mod p$ approaches uniformity,
it turns out that there is a persistent \emph{bias} in favor of
certain homology classes. This is very much akin to the Chebyshev
bias, analyzed in \cite{sarnakrubinstein}. To simplify the discussion we
project one more time: for each cyclically reduced word in $F_r$
homologous to $a_1^{k_1} a_2^{k_2} \dots a_r^{k_r}$ we consider $k_1 +
\dots + k_r \mod p.$ In this case we have a univariate distribution,
whose generating function is given by $\psi_n(x)= R_n(c; x, \dots,
x),$ with $c = \frac{r}{\sqrt{2r - 1}}$ (as per formula (\ref{genfn};
we leave in the general $c$, to underline that our results apply to
general question on distribution of coefficients of the Laurent polynomials
$R_n$). 

The number of elements congruent to $q \mod p$ is given by 
\begin{equation}
\label{ftrans}
\mathcal{ N}_{n,q} = {\frac{1}{p}}\sum_{j=0}^{p-1} \chi^{-qj}\psi_n(\chi^j),
\end{equation}
where $\chi=\exp(2\pi i /p)$ is a primitive $p$-th root of unity. 
Let us recall that
\begin{equation}
\psi_n(e^{i x}) = {\frac1 {T_n(c)}}\left\{ 
{\frac12}\left(c \cos x  +
\sqrt{c^2 \cos^2 x -1}\right)^n +
{\frac12}\left(c \cos x - 
\sqrt{c^2 \cos^2 x -1}\right)^n\right\}.
\end{equation}
Note the following properties of the function $\psi_n$:
\begin{subequations}\label{props}
\begin{gather}
\psi_n(1/x) = \psi_n(x),\label{propa}\\
\mbox{If $c \cos x < 1,$ then $| \psi_n(\exp(i x)) | T_n(c) \leq
1.$}\label{propb}\\
\psi_n\left\{\exp(i (\pi - x))\right\} = (-1)^n
\psi_n\left\{\exp(i x)\right\}\label{propc}\\
\mbox{If $c \cos x \geq 1,$ then $\psi_n(\exp(i x)) > 0.$}\label{propd}\\
\mbox{If $x \in [0, \arccos 1/c], n \gg 1$  then 
${\frac{|\psi_n(\exp(i x))| T_n(c)}  {\left[c + \sqrt{c^2 \cos^2 x -
1}\right]^n}} = 1 + o(1),$}\label{prope}\\
\psi_n(\exp(i x_1)) = o(\psi_n(\exp(i x_2))\mbox{\ for\ }0\leq
x_2 < \arccos 1/c,~x_2 < x_1 < \pi - x_2.\label{propf}
\end{gather}
\end{subequations}

Using Property \eqref{propa}, we can write
\begin{equation}
\label{cossum}
\mathcal{ N}_{n,q} = {\frac{1}{p}}\left[\psi_n(1) + 2
\sum_{j=1}^{{\frac{p-1}{2}}} 
\cos {\frac{2\pi q j}{p}} \psi_n(\chi^j)\right].
\end{equation}

Since $\cos{\frac{2\pi m}{p}}< 1$ is monotonically decreasing as a
function of $m$ for $0\leq m \leq {\frac{p-1}{2}}$, 
we see:

\begin{theorem}
\label{evenbias}
For sufficiently large even $n$, $\mathcal{ N}_{n, q} < \mathcal{ N}_{n, 0}.$
\end{theorem}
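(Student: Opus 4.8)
The plan is to read off the sign of $\mathcal{N}_{n,q}-\mathcal{N}_{n,0}$ directly from the Fourier-analytic formula \eqref{cossum}. Subtracting, the $\psi_n(1)$ term cancels, so
\[
\mathcal{N}_{n,q}-\mathcal{N}_{n,0}=\frac{2}{p}\sum_{j=1}^{(p-1)/2}\left(\cos\frac{2\pi q j}{p}-1\right)\psi_n(\chi^j),
\]
and since $\cos(2\pi qj/p)-1\le 0$ always (with strict inequality for at least one $j$, as $q\not\equiv 0$), it suffices to show that $\psi_n(\chi^j)>0$ for every $j=1,\dots,(p-1)/2$ once $n$ is large and even. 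That is where the restriction to even $n$, and the properties collected in \eqref{props}, come in.

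The key step is to split the indices $j$ according to whether $c\cos(2\pi j/p)\ge 1$ or $<1$. For the former, Property \eqref{propd} gives $\psi_n(\chi^j)>0$ outright, with no parity hypothesis. For the latter, write $2\pi j/p = x_j$ with $c\cos x_j<1$; here I would use the combination of \eqref{propb} and \eqref{propc}. First note there is a dominant index: the smallest $x_j$ exceeding $\arccos(1/c)$, call its bound $\beta=c+\sqrt{c^2\cos^2 x_j-1}$ raised to the $n$; by \eqref{prope}--\eqref{propf} the contribution of that single term $\psi_n(\chi^{j_0})$, times $T_n(c)$, is asymptotic to $(c+\sqrt{c^2\cos^2 x_{j_0}-1})^n$, while by \eqref{propb} every remaining $j$ with $c\cos x_j<1$ contributes at most $1/T_n(c)$ in absolute value, hence at most $\tfrac{p}{2}$ altogether — which is $o$ of the dominant term. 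So for large $n$ the entire sum $\sum_{j=1}^{(p-1)/2}(\cos\frac{2\pi qj}{p}-1)\psi_n(\chi^j)$ is governed by the finitely many indices $j$ with $c\cos x_j\ge 1$, each of which is positive by \eqref{propd}, together with the single near-dominant term $j_0$, whose sign is controlled by $(-1)^n$ through \eqref{propc} (it is the reflection $x\mapsto \pi-x$ that picks up $(-1)^n$, and for $n$ even that sign is $+1$). Taking $n$ even and large therefore forces $\psi_n(\chi^j)>0$ for all relevant $j$, so each summand $(\cos\frac{2\pi qj}{p}-1)\psi_n(\chi^j)\le 0$ and the total is strictly negative.

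The main obstacle is the bookkeeping in the subdominant range: one must check that the $o(1)$ errors from the many indices with $c\cos x_j<1$, as bounded crudely by \eqref{propb}, really are negligible against the genuine growth of the terms with $c\cos x_j\ge 1$ (or against the near-critical term $j_0$), and in particular that $\arccos(1/c)$ is not itself a rational multiple of $2\pi$ in an awkward way that would make the ``dominant'' term ambiguous. Since $\psi_n$ evaluated at a point with $c\cos x\ge 1$ grows like a fixed exponential base $>1$ while each bad term is $O(1/T_n(c))$, the separation is exponential and the argument closes; the parity of $n$ enters only through the reflection identity \eqref{propc}, and the claimed inequality is strict for all sufficiently large even $n$.
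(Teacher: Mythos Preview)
Your overall strategy---subtract to get the Fourier sum with strictly negative weights $\cos(2\pi qj/p)-1$, then argue that the dominant $\psi_n(\chi^j)$ are positive for large even $n$---is exactly the paper's approach; its one-line proof cites the same properties together with the monotonicity of $\cos$.

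Where your write-up goes wrong is in the handling of the range $c\cos x_j<1$. You introduce a ``dominant index $j_0$'' described as ``the smallest $x_j$ exceeding $\arccos(1/c)$'' with asymptotic $(c+\sqrt{c^2\cos^2 x_{j_0}-1})^n$, but for such an $x_{j_0}$ the radicand is negative and property \eqref{prope} simply does not apply. Property \eqref{propc} by itself does not determine a sign either: for even $n$ it only says $\psi_n(e^{ix})=\psi_n(e^{i(\pi-x)})$, which helps precisely when $\pi-x$ lands in $[0,\arccos(1/c)]$ so that \eqref{propd} can be invoked. And you cannot (and do not need to) conclude that $\psi_n(\chi^j)>0$ for \emph{every} $j$; for $x_j$ in the middle band $(\arccos(1/c),\,\pi-\arccos(1/c))$ the values oscillate in sign.

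The argument the paper has in mind is cleaner and uses the ``monotonicity of $\cos$'' you did not exploit. Among the angles $x_j=2\pi j/p$ for $1\le j\le (p-1)/2$, the one with $|\cos x_j|$ largest is $x_{(p-1)/2}=\pi-\pi/p$, since it is closer to $\pi$ than $x_1=2\pi/p$ is to $0$. For even $n$, \eqref{propc} gives $\psi_n(\chi^{(p-1)/2})=\psi_n(e^{i\pi/p})$, which is positive by \eqref{propd}. Property \eqref{propf} (applied with $x_2=\pi/p$, after using \eqref{propc} once more) then makes every other $\psi_n(\chi^j)$ be $o$ of this single dominant term. Hence the whole sum is asymptotically a strictly negative constant times $\psi_n(\chi^{(p-1)/2})>0$, and is therefore negative for all sufficiently large even $n$. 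The middle-range terms are absorbed by \eqref{propf}; their signs never need to be determined.
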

\begin{proof}
This is an immediate consequence of the monotonicity of $\cos$,
equation (\ref{cossum}) and Properties \eqref{propa}, (\eqref{propc}, \eqref{propd} and  \eqref{propf} above.
\end{proof}

For $q\neq 0 \mod p$, the term largest in absolute value in the sum
(aside the $\psi_n(1)$ term) on the right hand side of
eq. (\ref{cossum}) is the $\psi_(\chi^{{\frac{p-1}2}})$ term, so if
we assume that $n$ is even, then the next largest (after $\mathcal{ N}_{n,
0}$) term will be $\mathcal{ N}_{n, p-2}$  (since $(p-2)
[(p-1)/2] = 1 \mod p$), then $\mathcal{ N}_{n, p-4}$, and so on. For $n$
odd, the ordering is reversed.

\section{An extension and limiting distributions for graphs}
\label{graphs}

An inspection of the proof of Theorem \ref{centlim} reveals that in
order to show that for a sequence of probability distributions $\{P_n(x)\}$
on $\integers$, the distributions $\{P_n(x/\sqrt{n})\}$ converged to a
limiting normal distribution with mean $0$, we 
used the following conditions (we will state them in a univariate
setting for simplicity; the multivariate case is the same): 

\medskip\noindent
{\bf Condition 1.} The characteristic function of $\{P_n\}$ has the
form
\[
\chi(P_n) = f^n(\theta) + o(1),
\]
where $f_j(\theta)$ is twice continuously differentiable at $0$, so that
$f_j(\theta) = a_j + b_j \theta + c_j\theta^2 + o(\theta^2).$

\medskip\noindent
{\bf Condition 2.}
\[
a_1 = 1,\qquad b_2 = 0,\qquad c_2 < 0.
\]

Suppose now we generalize the setting of Section \ref{modsec} as
follows:

Let $\mathcal{ G}$ be a connected $r$-regular non-bipartite graph,
directed or not, (possibly with self-loops and multiple edges), on $k$
vertices. Let $v_1$ and $v_2$ be two vertices of $\mathcal{ G}$. Consider
now the set $W_N$ of all closed walks (circuits) of length $N$ on
$\mathcal{ G}$. Let ${\bf f}: V(\mathcal{ G}) \rightarrow R$ be a function
assigning a weight to each vertex of $\mathcal{ G},$ and define a random
variable $X_{\bf f}$ to be $\sum_{l=1}^N {\bf f}(v_l)$ for  $w = v_1,
\ldots, v_N \in W_N.$ What can we say about the distribution of
$X_{\bf f}$? It turns out that asymptotically we can say a lot. First,
however, define 
\[\mu({\bf f}) = {\frac1k}\sum_{j=1}^k {\bf f}(v_j),\]
and ${\bf f}_0 = {\bf f} - \mu({\bf f}) {\bf 1}.$
Define further the {\em Laplacian} $\Delta(\mathcal{ G})$ of $\mathcal{ G}$ to
be $\Delta(\mathcal{ G})= r {\bf I} - A(\mathcal{ G}),$ and define
$\Delta_0(\mathcal{ G})$ to be $\Delta(\mathcal{ G})$ viewed as an operator on
the orthogonal complement to ${\bf 1}$ (that is, vectors with $0$
sum). Let $P_N(x)$ be the distribution of $X_{\bf f}$ on $W_N$.

\begin{theorem}
\label{walks}
The distributions $P_N((x-N \mu({\bf f}))/\sqrt{N})$ converge to a
balanced (that is, mean $0$) normal distribution with variance 
\begin{equation}
\label{varfrm}
\sigma^2({\bf f}) = {\frac1k} \left[-\|{\bf f}_0\|^2 
+ 2 r {\bf f}_0^t \Delta_0^{-1}(\mathcal{ G}) {\bf f}_0 \right]
         =
{\frac1k}\left[{\bf f}_0^t (-{\bf I}_0 + 2 r \Delta_0^{-1}(\mathcal{ G}))
{\bf f}_0\right].
\end{equation}
\end{theorem}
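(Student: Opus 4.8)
The plan is to mimic the proof of Theorem \ref{centlim} exactly, replacing the rank-one-perturbation analysis of $B_r$ with a general perturbation-theoretic statement about the weighted adjacency operator, and then verify Conditions 1 and 2 from the start of Section \ref{graphs}. Concretely, introduce the formal parameter $t$ (dual to the random variable $X_{\bf f}$) and form the deformed matrix $A_t = A(\mathcal{G}) D_t$, where $D_t$ is the diagonal matrix with entries $e^{i t {\bf f}(v_j)}$ (or, more symmetrically, conjugate so that $A_t$ is Hermitian when $\mathcal{G}$ is undirected). The moment generating / characteristic function of $X_{\bf f}$ on $W_N$ is then $\tr A_t^N$, and dividing by $\tr A_0^N = \tr A(\mathcal{G})^N$ gives the characteristic function $\phi_N(t)$ of $P_N$, up to the usual $o(1)$ coming from subleading eigenvalues. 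Since $\mathcal{G}$ is connected and non-bipartite, $A(\mathcal{G})$ has a simple Perron eigenvalue $r$ strictly dominating all others in modulus, with eigenvector ${\bf 1}$; for small $t$ this eigenvalue perturbs analytically to $\lambda(t)$ with $\lambda(0) = r$, and $\phi_N(t) = (\lambda(t)/r)^N + o(1)$, which is exactly Condition 1 with $f(t) = \lambda(t)/r$.

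The heart of the matter is then a second-order perturbation computation: expand $\lambda(t) = r + \lambda'(0) t + \tfrac12 \lambda''(0) t^2 + o(t^2)$ using the standard Rayleigh–Schrödinger formulas. First I would write $A_t = A(\mathcal{G}) + t A_1 + \tfrac{t^2}{2} A_2 + \cdots$ with $A_1 = i A(\mathcal{G}) F$ and $A_2 = -A(\mathcal{G}) F^2$ (where $F = \mathrm{diag}({\bf f}(v_j))$), so that $\lambda'(0) = \langle {\bf 1}, A_1 {\bf 1}\rangle / \langle {\bf 1}, {\bf 1}\rangle = i r \mu({\bf f})$ — this produces the recentering by $N\mu({\bf f})$ in the statement. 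For the variance one needs $\lambda''(0)$, which has the two-term shape $\langle {\bf 1}, A_2 {\bf 1}\rangle / k + 2\langle {\bf 1}, A_1 Q (rI - A(\mathcal{G}))^{-1} Q A_1 {\bf 1}\rangle / k$, where $Q$ is the projection onto ${\bf 1}^{\perp}$; after subtracting the square of the first-order drift (i.e. passing to ${\bf f}_0$) the first piece contributes $-\|{\bf f}_0\|^2 / k$ and the second contributes $2r\, {\bf f}_0^t \Delta_0^{-1}(\mathcal{G}) {\bf f}_0 / k$, since $rI - A(\mathcal{G})$ restricted to ${\bf 1}^{\perp}$ is precisely $\Delta_0(\mathcal{G})$, which is invertible by non-bipartiteness and connectivity (its kernel on the whole space is spanned by ${\bf 1}$). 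That matches equation (\ref{varfrm}). Condition 2 then reads: $f(0) = 1$ (immediate), the coefficient of $t$ in $\log f$ vanishes once we recenter (this is the $N\mu({\bf f})$ shift), and the coefficient of $t^2$ is $-\tfrac12\sigma^2({\bf f}) < 0$, which requires checking $\sigma^2({\bf f}) > 0$; this follows because $\Delta_0^{-1}$ is positive definite and, more carefully, one can show $-I_0 + 2r\Delta_0^{-1}$ is positive semidefinite on ${\bf 1}^{\perp}$ using $0 \le A(\mathcal{G})$-eigenvalue bounds $|\lambda_j| \le r$, with degeneracy $\sigma^2({\bf f}) = 0$ only in trivial cases (e.g. ${\bf f}_0 = 0$). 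Finally, invoke the Continuity Theorem \ref{ct}: $\phi_N((x - N\mu)/\sqrt{N}) \to \exp(-\tfrac12 \sigma^2 x^2)$ pointwise with a continuous limit, hence convergence to the stated Gaussian; the multivariate version is identical with ${\bf f}$ vector-valued and the obvious matrix notation.

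The main obstacle I anticipate is not the formal perturbation expansion — that is routine Rayleigh–Schrödinger theory, valid because the Perron eigenvalue is simple — but rather two bookkeeping issues. First, handling the directed (non-symmetric) case cleanly: there $A(\mathcal{G})$ need not be diagonalizable, so one must work with left and right Perron eigenvectors and be slightly careful that the second-order term still collapses to the resolvent expression involving $\Delta_0^{-1}$; this works because $r$ is still a simple eigenvalue (Perron–Frobenius for irreducible non-negative matrices) and the reduced resolvent on the complement of the (generalized) eigenspace is well-defined. Second, justifying the uniform $o(1)$ control on subleading eigenvalues: one needs a spectral gap that is uniform for $t$ in a fixed neighborhood of $0$, which follows by continuity of the spectrum and the strict domination $|\lambda_j(0)| < r$ for $j \ne 1$ (guaranteed by connectedness and non-bipartiteness), but this should be stated carefully rather than waved through. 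The positivity $\sigma^2({\bf f}) \ge 0$ and the identification of the degenerate cases is the one genuinely substantive (as opposed to bookkeeping) point, and I would spend the most care there.
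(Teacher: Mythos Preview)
Your proposal is correct and follows essentially the same route as the paper: the paper's proof constructs the same generating function $\tr(D_k(x)A)^N$, reduces to Conditions~1 and~2 via the simple Perron eigenvalue (connected, non-bipartite), and then invokes the perturbation computations of Section~\ref{perturb} (Kato's formulas \eqref{kato2}--\eqref{kato3} leading to \eqref{svharm2}, with Theorem~\ref{posthmsym} supplying the sign of $\lambda^{(2)}$) --- which is exactly the Rayleigh--Schr\"odinger expansion you write out by hand, with the reduced resolvent $S_\lambda$ becoming $-\Delta_0^{-1}$. The concerns you flag (directed/non-normal case, uniform gap, degeneracy of $\sigma^2$) are precisely the ones the paper addresses in Remark~\ref{sharpening} and Remark~\ref{varf}.
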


\begin{proof}
Exactly as in Section \ref{modsec} we construct a
generating function $g_N$ for $X_{\bf f}$ on $W_N$. To do this, let 
$A$ be the adjacency matrix of $\mathcal{ G}$, and let 
\[
D_k(x) = \begin{pmatrix}x^{{\bf f}(v_1)} &             &          &            &        & \cr
                   &x^{{\bf f}(v_2)}  &          &               &        & \cr
                   &             & x^{{\bf f}(v_3)}         &            &        & \cr
                   &             &          & \ddots     &        & \cr
                   &             &          &            & x^{{\bf f}(v_k)}      &
                
\end{pmatrix}.
\]
Then, 
\[g_N(x) = \tr (D_k(x) A)^N = \sum_{j=1}^k \lambda_j^N(D_k(x)
A),\]
where $\lambda_1, \ldots, \lambda_j$ are eigenvalues, and, just as in
Section \ref{limitu}, we have
$\chi(P_N)(\theta) = g_N(\exp(i \theta))/c_N,$ where 
\[
c_N = \left|W_N\right| = \sum_{j=1}^k \lambda_j^N(A).
\]
Since $\mathcal{ G}$ is an {\em $r$=regular}, {\em non-bipartite} graph, it has
a unique eigenvalue of maximal modulus, and that eigenvalue is
$\lambda_1=r.$

Now, we can directly apply Conditions 1 and 2 (and accompanying
comments) above, and the results of Section \ref{perturb} (noting that
Assumptions 1--4 hold) to obtain 
the desired result (in particular, the estimate needed in Condition 2
is precisely Theorem \ref{posthmsym}). We replaced the resolvent in
formula (\ref{svharm2}) by the equivalent (by the discussion in the
beginning of Section \ref{perturb}) Laplacian form, since that is more
common in graph theory.
\end{proof}
\begin{remark}
\label{varf}
If the vector $\mathbf{f}$ is an eigenvector of $A^t A$ with
eigenvalue $r^2$, the corresponding variance is equal to zero. By
Remark \ref{sharpening} this will not happen, \textit{eg}, if $G$ is a
connected \emph{non-bipartite} \emph{undirected} graph, but it does happen
for general directed graphs; see the discussion of the directed line
graph in Section \ref{noback}. 
\end{remark}

The above remark leads to the following question:

\begin{question}
What combinatorial property of an $r$-regular directed
graph $G$ is reflected in the algebraic statement that the operator
norm of $A_0(G)$ is equal to $r$?
\end{question}
A slight change in notation transforms Theorem \ref{walks} into
a central limit theorem for distributions over closed orbits of
primitive irreducible Markov processes over a finite number of states
-- the irreducibilty is exactly equivalent to the connectivity of the
graph $\mathcal{ G}$ above. For ease of reference we state this as a
separate theorem. The notation for ${\bf f}$, $\mu$, etc, is as
before; the space $W_N$ is now a probability space with the obvious
probability measure; ${\bf P}={\bf P}^t$ is the transition matrix
(note that Remark \ref{varf} remains valid in this setting as well).

\begin{remark}
\label{markov}
Let $P_N(x)$ be the distribution of $X_{\bf f}$ on $W_N$. Then
$P_N((x-N\mu({\bf f}))/\sqrt{N})$ converge to a balanced (that is, mean
$0$) normal distribution with variance 
\begin{equation}
\label{varmark}
\sigma^2({\bf f}) = {\frac1k} \left[-\|{\bf f}_0\|^2 
+ 2 {\bf f}_0^t ({\bf I}_0 - {\bf P}_0)^{-1} {\bf f}_0 \right] =
{\frac1k}\left[{\bf f}_0^t (-{\bf I}_0 + 2 r ({\bf I}_0 - {\bf P}_0)^{-1})
{\bf f}_0\right].
\end{equation}
\end{remark}

\begin{remark}
\label{arbcoeff}
We have actually shown a slightly stronger result: instead of the
trace (distribution over cycles), we could have considered the $ij$-th
element of ${\bf P}$. Since the principal eigenvector varies
continuously under perturbations (see \cite[Chapter II.4.1]{kato95}), we
could have replaced our sample space $W_N$ as above by the space
$\mathcal{ C}_N$ of paths of length $N$ joining the $i$-th to the $j$-th
vertex. An easy computation shows that the covariance is the
covariance  given in equation \ref{varmark}, divided by a further
factor of $k$. The same remark applies to Theorem \ref{walks}.
\end{remark}

\subsection{Distribution modulo a prime}
\label{primedist}
Theorems \ref{walks} and \ref{markov} have particularly simple
analogues if the function $f$ we are studying is integer valued, and
we are interested in the distribution of the $\integers/p
\integers$-valued random variable $Y_f(n)$ which assigns to each cycle
of length $n$ the sum of the values of $f$ modulo $p$. In that case,
under the assumption that the adjacency matrix $A$ (in the context of
Theorem \ref{walks}) or the transition matrix $A$ (in the context of
Theorem \ref{markov}) is irreducible and primitive (the last two $A(\mathcal{ L}_u(G))$
conditions guarantee that $A$ has a single eigenvalue $\lambda_0$ of maximal
modulus, the eigenspace of $\lambda_0$ is one-dimensional, and the
orthogonal subspace is invariant under $A$), then we see
that the distributions $\mathcal{ P}_n$ of $Y_f(n)$ approach the uniform
distribution (on $\integers/p \integers$) exponentially fast in $n$
(though a more reasonable measure of the speed of convergence is the
size of $W_n$, in which case the convergence is polynomial). This
statement follows from the:

\begin{lemma}
If $A$ is a matrix
satisfying the conditions above, then the spectral radius $r_{UA}$ of $U A$,
for $U$ any non-trivial unitary matrix such that the top eigenvector
of $A$ is not also an eigenvector of $U$, is strictly smaller than
that of $A$ ($r_A$).
\end{lemma}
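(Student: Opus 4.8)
The plan is to compare the two spectral radii by using the Perron--Frobenius structure of $A$ together with a strict-triangle-inequality argument, exactly in the spirit of the classical proof that a primitive non-negative matrix has a unique eigenvalue on its spectral circle. Write $r_A$ for the Perron eigenvalue of $A$, with $v$ its (positive, normalized) right eigenvector and $w^t$ its left eigenvector, normalized so $w^t v = 1$. After rescaling we may assume $r_A = 1$, so that $A^n \to v w^t$ entrywise. I would first establish the easy direction: since $U$ is unitary, $|(UA)x| \le |A||x|$ coordinatewise is too crude, so instead bound the spectral radius of $UA$ via $r_{UA} = \lim_n \|(UA)^n\|^{1/n}$. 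The key observation is that $(UA)^n = U(A U)^{n-1} A$, and each factor $AU$ has operator norm at most $\|A\|_{op}$; this already gives $r_{UA} \le r_A$ when $A$ is, say, a scaled doubly-stochastic matrix, but for the strict inequality we must work harder.

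For the strict inequality the main step is to suppose, for contradiction, that $r_{UA} = r_A = 1$. Then $UA$ has an eigenvalue $\lambda$ with $|\lambda| = 1$; let $z \ne 0$ satisfy $UAz = \lambda z$. I would then consider the vector of moduli $|z|$ (coordinatewise) and compare $|z| = |UAz|$ with $A|z|$. Because $U$ is unitary, $\|UAz\| = \|Az\|$, and because $A$ has spectral radius $1$ with all other structure contracting on $v^\perp$ (this is where primitivity/the single-maximal-eigenvalue hypothesis enters), one shows $\|Az\| \le \|z\|$ with equality iff $z$ is a scalar multiple of the Perron eigenvector $v$. Tracking the equality case: $|\lambda| = 1$ forces $\|Az\| = \|z\|$, hence $z = c\,v$ for a scalar $c \ne 0$; but then $UAz = cUAv = cUv$ (using $Av = v$), and this equals $\lambda c v$ only if $Uv = \lambda v$, i.e. $v$ is an eigenvector of $U$ — contradicting the hypothesis. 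Therefore $r_{UA} < 1 = r_A$.

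The one genuine subtlety — and the step I expect to be the main obstacle — is making the inequality $\|Az\| \le \|z\|$ with the sharp equality characterization rigorous, since $A$ need not be self-adjoint (the paper explicitly allows directed graphs and non-symmetric transition matrices). In the self-adjoint case this is immediate from the spectral theorem: $\|A\|_{op} = r_A = 1$ with the eigenspace of $1$ spanned by $v$. In the general case one should instead argue with the non-negative matrix directly: replace the $\ell^2$ estimate by the Perron--Frobenius one, namely that for any complex $z$, $|UAz| = |Az|$ coordinatewise (since $U$ acts by a unitary, this is false coordinatewise — so one must phrase it as $\||z|\|$ versus a suitable Perron-type functional). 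The cleanest fix is: since the eigenvalue equation gives $|z| = |UAz|$, apply $w^t$ after taking moduli and use $|UAz| = $ (coordinatewise) $\le$ something dominated by $A|z|$ only after absorbing $U$; because that fails in general, I would instead run the whole contradiction argument inside $\ell^2$ but replace "$\|Az\|\le\|z\|$ with equality iff $z\parallel v$" by the statement that $A$ restricted to $v^\perp$ has operator norm strictly less than $1$ — which is exactly the content of the hypothesis that $A$ has a single eigenvalue of maximal modulus with one-dimensional eigenspace and $v^\perp$ invariant (this last clause is precisely what the paper's parenthetical about irreducibility and primitivity supplies, at least after replacing $A$ by $A^*A$ as in Remark~\ref{varf}). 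Decomposing $z = c v + z_0$ with $z_0 \in v^\perp$ and pushing through the contradiction then yields the claim. Finally I would note that the same argument, applied with $A$ replaced by $A$ itself rather than $A^*A$, covers the undirected (symmetric) case without the auxiliary step, which is the case actually needed for the equidistribution applications in Sections~\ref{limitp} and~\ref{primedist}.
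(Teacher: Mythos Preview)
The paper's proof is the single sentence ``The proof of the lemma is immediate,'' so there is nothing substantive to compare against; your write-up already does more than the paper. Your overall strategy---assume $UAz=\lambda z$ with $|\lambda|=\lambda_0$, use $\|Az\|=\|UAz\|=\lambda_0\|z\|$, and try to force $z\parallel v$---is the natural one, and you correctly isolate the one real issue: the step ``$\|Az\|=\lambda_0\|z\|\Rightarrow z\parallel v$'' is equivalent to $\|A_0\|_{\mathrm{op}}<\lambda_0$, which for non-normal $A$ does \emph{not} follow from $\rho(A_0)<\lambda_0$. Your proposed patch (read the operator-norm bound off the stated hypotheses, possibly after passing to $A^*A$) does not go through: the paper itself exhibits, in Corollary~\ref{oppnorm}, irreducible primitive matrices with $\mathbf{1}^\perp$ invariant and yet $\|A_0\|_{\mathrm{op}}=\lambda_0$. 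Worse, once such an $A$ is in hand the lemma for \emph{arbitrary} unitary $U$ is actually false: pick a unit $z_0\perp\mathbf{1}$ with $\|Az_0\|=\lambda_0$, set $w_0=Az_0/\lambda_0\in\mathbf{1}^\perp$, and choose any unitary $U$ with $Uw_0=z_0$ and $U\mathbf{1}\not\parallel\mathbf{1}$ (possible whenever the dimension is at least~$3$); then $UAz_0=\lambda_0 z_0$, so $\rho(UA)=\lambda_0$. The obstacle you flagged is therefore not a technicality but a genuine defect in the lemma as stated.

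What rescues the application---and is presumably the ``immediate'' argument the author has in mind---is that the $U$ actually used in Section~\ref{primedist} is \emph{diagonal}, and there the coordinatewise route you too quickly discarded works perfectly well. For diagonal unitary $U$ one has $|(UAz)_i|=|u_i|\,|(Az)_i|=|(Az)_i|\le(A|z|)_i$, whence $\lambda_0|z|\le A|z|$; the subinvariance principle for irreducible non-negative $A$ forces $|z|$ to be a scalar multiple of the Perron vector $\mathbf{1}$, and tracking equality in the triangle inequality (using primitivity to rule out the periodic case) pins down the phases and yields the contradiction with $\mathbf{1}$ not being an eigenvector of $U$. You were right that $|UAz|=|Az|$ fails coordinatewise for a general unitary---but it holds exactly for diagonal $U$, which is all the paper needs.
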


The proof of the lemma is immediate.

In our case, the matrix $U$ is the diagonal
matrix $U(\chi)$ with $u_{jj}=\chi_p^{f_j}$, with $\chi_p$ a non-trivial
$p$-th root of unity. The speed of convergence to the uniform
distribution is given by  $(\max_{\chi^p = 1} r(U(\chi) A))/r(A)$.

\section{Functions on edges and distributions over 
paths without backtracking}
\label{noback}
In this section we consider two kinds of questions, which are seen to
be intimately related. The first is: 

\begin{question}
Let $f$ be a function on the \emph{edges} of $G$. How are the
averages of $f$ over long cycles or paths in $G$ distributed? 
\end{question}
The second question is:

\begin{question}
Let $f$ be a function on the
\emph{vertices} of $G$. How are the averages of $f$ distributed
over long cycles in $G$ \emph{without backtracking} -- such cycles
are more closely related to, \emph{eg}, geodesics on surfaces,
then arbitrary cycles.
\end{question}

Both questions can be answered at the same time by constructing the
\emph{directed line graph} (or \emph{line digraph}) of
$G$. This construction can be performed for either a directed or
undirected graph $G$; In section \ref{lg} we will derive the results
for undirected graphs in detail, whilst in section \ref{dlg} we will
discuss the directed case somewhat more briefly (since the technical
details are essentially identical).

\subsection{The directed line graph of an undirected graph}
\label{lg}

The \emph{directed line graph} of $G$, denoted by $\mathcal{ L}(G)$, is
constructed as follows: The vertices
of $\mathcal{ L}(G)$ are edges of $G$ labelled with a $+$ or a $-$; that
is, to each edge $e$ of $G$ there correspond vertices $e_-$ and $e_+$
of $\mathcal{ L}(G)$. These correspond to the two possible orientations of
$e$: if the vertices of $e$ are $v$ and $w$, then we say that $v$ is
the head of $e_-$, and $w$ the tail (and write $v = h(e_-)$,
$w=t(e_-)$), while for $e_+$ this nomenclature 
is reversed. Two vertices $v_1$ and $v_2$ of $\mathcal{ L}(G)$ are joined
by a (directed) edge if the head of $v_1$ is the same as the tail of
$v_2,$ except that $e_-$ is never joined to $e_+$, and \textit{vice
versa}. We now make some observations and definitions.

\begin{definition}
Let $f$ be a function defined on the vertices of a graph $G$. We say
that a function $g$ defined on the vertices of $\mathcal{ L}(G)$ is the
\textit{gradient} of $f$, and write $g = \nabla f$ if $g(e) = f(h(e))
- f(t(e)).$
\end{definition}

\begin{definition}
We can identify functions on the vertices of $G$ with
(a subset of) functions on the the vertices of $\mathcal{ L}(G)$. To wit, if a
$f$ is a function on the vertices of $G$, we let $\mathcal{ L}f(e) =
f(t(e)).$
\end{definition}

\begin{observation}
\label{btl}
There is a natural correspondence between walks on $\mathcal{ L}(G)$ and walks on
$G$ without backtracking. Indeed, passing through a vertex $e$ of
$\mathcal{ L}(G)$ corresponds to going from $t(e)$ to $h(e)$. Since $e_+$
is not connected to $e_-$ for any $e \in E(G)$, any such walk is
automatically without backtracking. Similarly, a cycle on $\mathcal{ L}(G)$
corresponds to a tailless cycle without backtracking on $G$.
\end{observation}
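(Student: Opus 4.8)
The plan is to make the stated correspondence precise as a pair of mutually inverse maps on sequences of oriented edges, and then to read off the assertions about backtracking and about cycles. First I would fix conventions: a walk on $\mathcal{L}(G)$ is a sequence $\epsilon_1, \epsilon_2, \dots, \epsilon_N$ of vertices of $\mathcal{L}(G)$, that is, oriented edges of $G$, in which $\epsilon_i$ is joined to $\epsilon_{i+1}$ by a directed edge for each $i<N$; by construction of $\mathcal{L}(G)$ this means exactly that $h(\epsilon_i)=t(\epsilon_{i+1})$ and that $\epsilon_{i+1}$ is not the orientation-reversal of $\epsilon_i$. To such a sequence I associate the walk on $G$ that starts at $t(\epsilon_1)$ and successively traverses the underlying edge of $\epsilon_1$, then of $\epsilon_2$, and so on, each $\epsilon_i$ contributing the step $t(\epsilon_i)\to h(\epsilon_i)$; the matching condition $h(\epsilon_i)=t(\epsilon_{i+1})$ is precisely what makes these steps concatenate into a genuine walk on $G$.

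Next I would verify the two remaining points. The image walk on $G$ backtracks at the vertex $h(\epsilon_i)=t(\epsilon_{i+1})$ only if the edge underlying $\epsilon_{i+1}$ is the edge underlying $\epsilon_i$ traversed in the opposite direction, i.e.\ only if $\epsilon_{i+1}$ is the reversal of $\epsilon_i$; but that is exactly the pair forbidden in $\mathcal{L}(G)$ (where $e_-$ is never joined to $e_+$, nor $e_+$ to $e_-$), so no such backtracking can occur. Conversely, a backtrackless walk on $G$ is by definition a sequence of steps, each traversing an edge, with no edge immediately retraced; recording for each step the oriented edge used yields a sequence of vertices of $\mathcal{L}(G)$ with $h(\epsilon_i)=t(\epsilon_{i+1})$ and $\epsilon_{i+1}$ never the reversal of $\epsilon_i$, hence a walk on $\mathcal{L}(G)$. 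These two constructions are manifestly inverse to one another, which gives the asserted bijection. Applying the same construction to \emph{closed} walks on $\mathcal{L}(G)$ (those carrying an extra directed edge from $\epsilon_N$ back to $\epsilon_1$) forces $h(\epsilon_N)=t(\epsilon_1)$, so the image walk on $G$ closes up, together with $\epsilon_1$ not the reversal of $\epsilon_N$, so there is also no backtracking at the seam where the last step meets the first; a closed walk on $G$ that is backtrackless across its basepoint as well is precisely a tailless backtrackless cycle, which is the claim.

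There is no mathematical depth here, so the one thing to be careful about is bookkeeping: ``walk without backtracking'' must be phrased in terms of oriented edges rather than vertices alone, so that parallel edges between a pair of vertices and loops at a vertex are treated correctly, and a ``cycle'' (tailless closed walk) must be understood to carry the no-backtracking condition at the seam as well. Once these conventions are fixed, the verification is exactly the inverse-pair-of-maps argument above, with nothing further to check.
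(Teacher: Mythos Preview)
Your argument is correct and is exactly the approach the paper has in mind; in fact the paper gives no separate proof at all, treating the observation as self-evident with the brief inline justification already contained in the statement. Your write-up simply makes that justification precise by exhibiting the mutually inverse maps and checking the seam condition for cycles, which is entirely in keeping with the paper's intent.
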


If $G$ is an $r$-regular graph, then $\mathcal{ L}(G)$ is $r-1$-regular,
in the strong sense: each vertex of $\mathcal{ L}(G)$ has in-degree and
out-degree equal to $r-1$ (thus the total degree is $2r - 2$),
and from the above Observation \ref{btl}, $\mathcal{ L}(G)$ is connected
if and only if $G$ is. It follows that the adjacency matrix $A(\mathcal{
L}(G))$ of $\mathcal{ L}(G)$ is an irreducible nonnegative matrix, all of
whose row and column sums are equal to $r-1$. It follows that the
space of functions on the vertices of $\mathcal{ L}(G)$ orthogonal to the
vector $\mathbf{1}$ is an invariant subspace of $A(\mathcal{ L}(G))$ and
of $A^t(\mathcal{ L}(G))$ -- we will, as before, denote the two
matrices restricted to this subspace by $A_0$ and $A_0^t$,
respectively; the algebraic and geometric multiplicities of the
eigenvalue $r-1$ is equal to $1$, by standard Perron-Frobenius 
theory. Despite this, it turns out that $A^t A$ is spectacularly
degenerate. Indeed, the $ij$-th entry of $A^t A$ is equal to the
number of vertices of $\mathcal{ L}(G)$ adjacent simultaneously to the
$i$-th and the $j$-th vertex. It follows that the $ii$-th entry of
$A^t A$ is equal to $r-1$, while the $ij$-th entry is equal to $r-2$
if the corresponding directed edges of $G$ have the same tail, and is
$0$ otherwise. It follows that 
\begin{equation}
\label{opnorm}
A^t A = I_{2 E(G)} + (r-2) \begin{pmatrix}
J_1 &          &          & \cr
    & J_2       &        &  \cr
    &           & \ddots & \cr
     &          &        &  J_{V(G)}
\end{pmatrix},
\end{equation}
where the last term contains $V(G)$ $r\times r$ blocks, each of which
is the matrix of all $1$s. We thus have the following observation:
\begin{observation}
\label{specobs}
The spectrum of $A^t A$ has the following form: The
eigenvalue $(r-1)^2$ occurs $V(G)$ times, and the corresponding
eigenvectors are given precisely by $\mathcal{ L}f$ for arbitrary functions
$f$ on $G$ (the Perron eigenvector corresponding to the constant
function), while the eigenvalue $1$ occurs $2 E(G) - V(G)$ times. The
eigenvectors are those functions on the directed edges of $G$, for
which, for all vertices $v$ of $G$,  the sum of values on all the
edges \emph{leaving} $v$ is equal to $0$.
\end{observation}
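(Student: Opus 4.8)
The plan is to read off the spectral decomposition of $A^t A$ directly from the block form in Equation (\ref{opnorm}). Since $A^t A = I_{2E(G)} + (r-2) B$ where $B$ is the block-diagonal matrix with $V(G)$ blocks, each an $r \times r$ all-ones matrix, it suffices to understand the spectrum of $B$. Each all-ones $r\times r$ block $J$ has rank one, with eigenvalue $r$ on the constant vector $\mathbf{1}_r$ and eigenvalue $0$ on the $(r-1)$-dimensional orthogonal complement $\{v : \sum v_i = 0\}$. Hence $B$ has eigenvalue $r$ with multiplicity $V(G)$ and eigenvalue $0$ with multiplicity $2E(G) - V(G)$ (using that the blocks sum to $2E(G)$ rows, since $\mathcal{L}(G)$ has $2E(G)$ vertices). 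Therefore $A^t A$ has eigenvalue $1 + (r-2)r = r^2 - 2r + 1 = (r-1)^2$ with multiplicity $V(G)$, and eigenvalue $1 + 0 = 1$ with multiplicity $2E(G) - V(G)$, which is the numerical content of the claimed spectrum.

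Next I would identify the eigenvectors concretely. The index set for $A^t A$ is the set of directed edges of $G$; the blocks are indexed by vertices $v$ of $G$, where the $v$-block corresponds to the set of directed edges sharing tail $v$ — this is exactly the structure described after Equation (\ref{opnorm}) ("the $ij$-th entry is $r-2$ if the corresponding directed edges of $G$ have the same tail"). Within a fixed block, the constant vector $\mathbf{1}_r$ corresponds to a function that is constant on all edges leaving $v$; ranging over blocks, the $(r-1)^2$-eigenspace is the span of functions on directed edges that are constant on each "star" of out-edges. This is precisely the set $\{\mathcal{L}f : f \text{ a function on } V(G)\}$, since $\mathcal{L}f(e) = f(t(e))$ depends only on the tail of $e$, hence is constant on each out-star; conversely any function constant on out-stars arises this way. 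For the eigenvalue $1$, the orthogonal complement within each block is $\{v : \sum v_i = 0\}$, i.e., the function sums to zero over the edges leaving $v$; assembling over all $v$ gives exactly the functions on directed edges whose sum over the edges leaving any vertex vanishes, as claimed.

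A minor point to verify is that $\mathcal{L}f$ really is the Perron eigenvector direction when $f$ is constant: if $f \equiv 1$ then $\mathcal{L}f = \mathbf{1}_{2E(G)}$, which is the Perron eigenvector of both $A(\mathcal{L}(G))$ and $A^t(\mathcal{L}(G))$ (all row and column sums equal $r-1$, as noted just before the statement), and indeed $(r-1)^2$ is its $A^t A$-eigenvalue. I would also note that the map $f \mapsto \mathcal{L}f$ is injective on functions on $V(G)$ (since every vertex of $G$ is the tail of some directed edge, $G$ having no isolated vertices as it is $r$-regular with $r \geq 1$), so the $(r-1)^2$-eigenspace genuinely has dimension $V(G)$, matching the multiplicity count.

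I do not expect a serious obstacle here: the result is essentially a restatement of Equation (\ref{opnorm}) combined with the trivial spectral theory of the all-ones matrix, and the only care needed is the bookkeeping identifying the combinatorial description of the two eigenspaces with the algebraic one. The one thing worth stating cleanly is the dimension count $r \cdot V(G) = 2E(G)$ (the handshake identity for $r$-regular graphs), which is what makes the two multiplicities add up to $2E(G) = |V(\mathcal{L}(G))|$ and confirms we have the full spectrum.
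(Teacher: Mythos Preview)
Your proposal is correct and is exactly the approach the paper intends: the paper derives Equation~(\ref{opnorm}) and then states Observation~\ref{specobs} as an immediate consequence, leaving the spectral analysis of $I_{2E(G)}+(r-2)B$ and the combinatorial identification of the eigenspaces implicit. You have filled in precisely those details, including the handshake identity $rV(G)=2E(G)$ needed for the multiplicity count.
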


\begin{corollary}
\label{oppnorm}
The operator norm of $A_0$ is equal to $r-1$.
\end{corollary}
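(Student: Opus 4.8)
The plan is to read $\|A_0\|$ directly off the spectral description of $A^tA$ furnished by Observation \ref{specobs}. The one standard fact I will use is that for any real matrix $M$ one has $\|M\| = \sqrt{\lambda_{\max}(M^tM)}$, so it suffices to pin down the largest eigenvalue of $A_0^tA_0$.

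First I would observe that, because every row sum and every column sum of $A = A(\mathcal{L}(G))$ equals $r-1$, the vector $\mathbf{1}$ is an eigenvector of both $A$ and $A^t$; hence the hyperplane $\mathbf{1}^\perp$ is invariant under $A$, under $A^t$, and therefore under $A^tA$. Consequently $A_0^t = (A^t)|_{\mathbf{1}^\perp}$ and $A_0^tA_0 = (A^tA)|_{\mathbf{1}^\perp}$, so the restriction of $A^tA$ to $\mathbf{1}^\perp$ computes exactly the operator we care about.

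Next I would invoke Observation \ref{specobs} itself: $A^tA$ has only two eigenvalues, namely $(r-1)^2$, with eigenspace the $V(G)$-dimensional space $\{\mathcal{L}f : f \text{ a function on } V(G)\}$ (which contains $\mathbf{1} = \mathcal{L}(\text{const})$), and $1$, whose eigenspace is the space of functions on directed edges summing to zero around every vertex, a space orthogonal to every $\mathcal{L}f$ and in particular contained in $\mathbf{1}^\perp$. Passing to $\mathbf{1}^\perp$ therefore removes precisely the line $\reals\mathbf{1}$ from the $(r-1)^2$-eigenspace and leaves the $1$-eigenspace untouched; so $A_0^tA_0$ has eigenvalue $(r-1)^2$ with multiplicity $V(G)-1$ and eigenvalue $1$ with multiplicity $2E(G)-V(G)$. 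Since $(r-1)^2 \ge 1$ for $r \ge 2$, we get $\lambda_{\max}(A_0^tA_0) = (r-1)^2$ and hence $\|A_0\| = r-1$.

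The only bookkeeping point to watch in the last step is to make sure the $(r-1)^2$-eigenspace of $A^tA$ is not entirely absorbed into $\reals\mathbf{1}$ when we restrict to $\mathbf{1}^\perp$ — equivalently, that $\dim\{\mathcal{L}f\} = V(G) > 1$; this is automatic once $G$ has at least two vertices (and $r \ge 2$, so that $\mathcal{L}(G)$ is a genuine nonempty $(r-1)$-regular graph, the degenerate single-vertex case being of no interest). With that noted, there is no real obstacle: the corollary is essentially a transcription of Observation \ref{specobs} together with the invariance of $\mathbf{1}^\perp$. It is worth emphasizing, as a sanity check, that $r-1$ is \emph{not} an eigenvalue of $A_0$ (the Perron eigenvalue having been stripped away), so the content here is exactly that the operator norm nonetheless remains as large as it possibly could, driven entirely by the degeneracy of $A^tA$.
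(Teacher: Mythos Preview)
Your argument is correct and is exactly the route the paper intends: the corollary is stated without proof as an immediate consequence of Observation \ref{specobs}, and you have simply written out the one-line deduction (operator norm $=\sqrt{\lambda_{\max}(A^tA)}$, together with the invariance of $\mathbf{1}^\perp$ under $A^tA$). The bookkeeping check that the $(r-1)^2$--eigenspace survives restriction to $\mathbf{1}^\perp$ is a sensible addition that the paper leaves implicit.
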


Consider now the Laplace operator on $\mathcal{ L}(G)$: $\Delta_{\mathcal{
L}(G)} = (r-1) I - A(\mathcal{ L}(G)).$ We will need the following 
in the sequel:

\begin{theorem}
\label{imdel}
Let $E_{r-1}$ be the eigenspace of $(r-1)^2$ for $A^t A$. 
If $V^*(G)$ is the space of functions on the vertices of $G$, then
\begin{description}
\item[(a)]
\begin{equation*}
E_{r-1}=\mathcal{ L}(V^*(G)),
\end{equation*}

\item[(b)]
\begin{equation*}
\Delta_{\mathcal{ L}(G)}(E_{r-1}) = \nabla(V^*(G)),
\end{equation*}

\item[(c)]
$\nabla(V^*(G)) \cap E_{r-1} \cap \mathbf{1}^\perp =
\emptyset$, unless $G$ is bipartite. 
\end{description}
\end{theorem}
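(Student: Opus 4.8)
The plan is to compute all three pieces by exploiting Observation \ref{specobs}, which identifies $E_{r-1}$ explicitly as $\mathcal{L}(V^*(G))$, the functions $f\circ t$ on directed edges that are constant on each ``out-star'' (the set of directed edges leaving a fixed vertex of $G$). Part (a) is then \emph{literally} a restatement of Observation \ref{specobs}, so there is nothing new to do there; I would simply cite it. The content is in (b) and (c), and both reduce to understanding how the operator $A(\mathcal{L}(G))$ (equivalently $\Delta_{\mathcal{L}(G)}=(r-1)I-A(\mathcal{L}(G))$) acts on a function of the form $\mathcal{L}f(e)=f(t(e))$.

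For part (b), the key computation is: for a directed edge $e$ of $G$ with $t(e)=v$ and $h(e)=w$, the neighbours of $e$ in $\mathcal{L}(G)$ are exactly the directed edges $e'$ with $t(e')=w$ and $h(e')\neq v$ (this is the definition of $\mathcal{L}(G)$, with the ``no $e_+\leftrightarrow e_-$'' exclusion accounting for the $\neq v$). Hence $(A(\mathcal{L}(G))\mathcal{L}f)(e)=\sum_{e':\,t(e')=w,\,h(e')\neq v} f(w) = (r-1)f(w)$, since each such out-star at $w$ has $r$ edges and we drop exactly one. Therefore
\begin{equation*}
(\Delta_{\mathcal{L}(G)}\mathcal{L}f)(e) = (r-1)f(v) - (r-1)f(w) = (r-1)\bigl(f(t(e))-f(h(e))\bigr) = -(r-1)(\nabla f)(e).
\end{equation*}
So $\Delta_{\mathcal{L}(G)}$ maps $\mathcal{L}f$ to $-(r-1)\nabla f$; as $f$ ranges over $V^*(G)$ the image is exactly $\nabla(V^*(G))$ (the scalar $-(r-1)\neq 0$ does not shrink the span). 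Using (a) this gives $\Delta_{\mathcal{L}(G)}(E_{r-1})=\nabla(V^*(G))$, which is (b). (As a side benefit this reproves $E_{r-1}\subseteq\ker\,(\text{projection})$-type statements and re-confirms the eigenvalue $(r-1)^2$ for $A^tA$, since $A\mathcal{L}f=(r-1)\mathcal{L}(f\circ\mathrm{flip})$ up to the orientation bookkeeping.)

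For part (c), I must show $\nabla(V^*(G))\cap E_{r-1}\cap\mathbf{1}^\perp=\{0\}$ (the statement says $\emptyset$, but these are linear subspaces, so it means the trivial subspace) unless $G$ is bipartite. Suppose a nonzero $\phi$ lies in the intersection. Being in $E_{r-1}=\mathcal{L}(V^*(G))$, we have $\phi=\mathcal{L}g$ for some $g\in V^*(G)$, i.e. $\phi(e)=g(t(e))$ depends only on the tail. Being in $\nabla(V^*(G))$, we have $\phi=\nabla f$ for some $f$, i.e. $\phi(e)=f(h(e))-f(t(e))$. Equating: $f(h(e))-f(t(e)) = g(t(e))$ for every directed edge $e$. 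Now for the \emph{reversed} edge $\bar e$ (with $t(\bar e)=h(e)$, $h(\bar e)=t(e)$) we get $f(t(e))-f(h(e))=g(h(e))$. Adding the two relations yields $g(t(e))+g(h(e))=0$ for every edge, i.e. $g(u)=-g(v)$ whenever $u\sim v$ in $G$. Since $G$ is connected, fixing a base vertex forces $g(v)=\pm g(v_0)$ according to the parity of the distance from $v_0$ — and this assignment is \emph{consistent} around every cycle precisely when every cycle has even length, i.e. precisely when $G$ is bipartite. If $G$ is not bipartite, some odd cycle forces $g(v_0)=-g(v_0)$, hence $g\equiv 0$, hence $\phi=\mathcal{L}g=0$, a contradiction. (When $G$ \emph{is} bipartite the intersection is genuinely nontrivial, consistent with the ``unless'' clause: take $g=\pm1$ on the two sides and a corresponding $f$.) I should also note that the $\mathbf{1}^\perp$ hypothesis is what rules out the constant-$g$ solution $\phi\equiv c$ (which is $\nabla$ of nothing unless $c=0$, but more to the point is not orthogonal to $\mathbf 1$); it keeps the bookkeeping clean but the odd-cycle argument is the real engine.

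The main obstacle I anticipate is purely notational: keeping straight the orientation conventions of $\mathcal{L}(G)$ — head/tail of $e_\pm$, which out-star a neighbour lives in, and the factor of $(r-1)$ versus $r$ coming from the single excluded edge $e_+\not\sim e_-$. Once the identity $\Delta_{\mathcal{L}(G)}\mathcal{L}f=-(r-1)\nabla f$ is pinned down correctly, (a), (b), (c) all fall out with no further analysis; the bipartiteness dichotomy in (c) is exactly the classical ``$2$-colourability $\Leftrightarrow$ no odd cycle'' fact applied to the sign-flip function $g$.
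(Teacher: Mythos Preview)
Your proposal is correct and follows essentially the same approach as the paper: part (a) is cited from Observation \ref{specobs}, part (b) is the computation $\Delta_{\mathcal{L}(G)}\mathcal{L}f=-(r-1)\nabla f$ (the paper does exactly this), and part (c) reduces to a sign-alternating condition forcing bipartiteness. The only cosmetic difference is in (c): the paper works with the potential $k$ (your $f$) and compares two edges with the same tail to deduce that $k$ is constant on neighborhoods, whereas you work with $g$ and compare an edge with its reverse to get $g(u)+g(v)=0$; both are the same ``two-coloring $\Leftrightarrow$ no odd cycle'' observation.
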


\begin{proof} Part (a) is the content of Observation \ref{specobs}.
Part (b)
is a corollary of Part (a). Indeed, $\Delta_{\mathcal{
L}(G)}(f)(x) = (r-1)f(x) - \sum_{h(x) =t(y)} f(y).$ If $f = \mathcal{
L}g,$ then 
\begin{equation}
\Delta_{\mathcal{ L}(G)}(f)(x) = (r-1)(g(t(x)) - g(h(x))),
\end{equation}
since all the $y$ adjacent to $x$ have the same tail, equal to the
head of $x$.

To show Part(c), suppose $\nabla(V^*(G)) \cap E_{r-1} \neq
\emptyset$. Let $g$ be in the intersection, and $k$ be such that
$\nabla(k) = g.$ It follows that for any $x$, $y$ such that 
$t(x) = t(y)$,
 $g(x)=g(y).$ We see that 
$k(h(x)) - k(t(x)) = k(h(y)) - k(t(y)),$ which implies in turn that
$k(h(x)) = k(h(y))$. So, $k$ is the eigenvector of the $0$ eigenvalue
of the Laplace operator on $G$, and hence is constant, unless $G$ is
bipartite.
\end{proof}

We end this section with a remark necessary to compute distributions,
as done in the following Section \ref{appdist}. To wit:

\begin{remark}
\label{primm}
The adjacency matrix of the line graph of a non-bipartite graph $G$ is
primitive. That is, there is only one eigenvalue on the circle of
radius $r-1$ in the complex plane, and that is $r-1$. Its geometric
multiplicity is $1$.
\end{remark}

\begin{proof}
Doubtlessly there are simpler arguments, but we choose to use the
results (described in \cite{staterI})
on the Ihara zeta function $Z$ of $G$, which can be expressed as a
determinant in two ways: 

The first way (original theorem of Ihara \cite{iharazetaorig}) is:

\begin{equation}
Z^{-1}(u) = (1-u^2)^{\mathcal{R}-1} \det({(1+(r-1)u^2)\mathbf{I} - u A}),
\end{equation}
with $A$ the adjacency matrix of $G$, and $\mathcal{R}$ the rank of
the fundamental group of $G.$

The second way (due to Hyman Bass \cite{bassiharatree} is):

\begin{equation}
Z^{-1}(u) = \det({\mathbf{I} - u M}), 
\end{equation}
where $M$ is the adjacency matrix of the directed line graph of $G$

The equality of the two expressions implies that $v$ is an eigenvalue
of $M$ if and only if 
$v+(r-1)/v$ is an eigenvalue of $A$ (we are ignoring the eigenvalues
$\pm1$, which occur with large multiplicity in the spectrum of
$M$). Suppose that $v$ has modulus $r-1$, so that $v= (r-1)\exp(i
\theta),$ for some $\theta$. It follows that $w=\exp(i \theta) + (r-1)
\exp(-i \theta)$ is an eigenvalue of $A$, and since $A$ is symmetric,
$\theta \in \{0, \pi\}$. If $\theta=0$, $v=r-1$, while if
$\theta=\pi$, $v=-(r-1),$ but then $w=-r$ is an eigenvalue of $A$, and
so $G$ is bipartite.  

The statement about the multiplicity of the eigenvalue $r-1$ is
immediate, since $\mathcal{L}(G)$ is clearly strongly connected.
\end{proof}

We include the following observations both for the sake of
completeness, and in view of Lemma \ref{gradvanish} below.

\begin{lemma}
\label{compform}
\begin{equation*}
\Delta \mathcal{L} = (r-1) \nabla.
\end{equation*}
\end{lemma}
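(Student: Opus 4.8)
The plan is to verify the identity pointwise: for an arbitrary vertex $x$ of $\mathcal{L}(G)$ — that is, an oriented edge of $G$ with tail $t(x)$ and head $h(x)$ — I would evaluate $(\Delta_{\mathcal{L}(G)}\,\mathcal{L}f)(x)$ directly from the definitions and compare it with $(r-1)\nabla f(x)$. This is essentially the computation already performed inside the proof of Theorem~\ref{imdel}(b), so the argument is short; the point of stating it separately is to have the clean operator identity on hand for Lemma~\ref{gradvanish}.

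First I would recall that $\mathcal{L}(G)$ is $(r-1)$-regular, so its Laplacian acts by
\[
(\Delta_{\mathcal{L}(G)}\,g)(x) = (r-1)\,g(x) - \sum_{y:\ x\to y} g(y),
\]
where the sum runs over the out-neighbours $y$ of $x$. By the construction of the directed line graph, $x\to y$ holds precisely when $t(y) = h(x)$ (with the reversal $\bar x$ of $x$ excluded, although it too would have tail $h(x)$), and there are exactly $r-1$ such $y$. Now I would substitute $g=\mathcal{L}f$, so $g(x)=f(t(x))$ and, crucially, $g(y)=f(t(y))=f(h(x))$ for \emph{every} out-neighbour $y$ of $x$, since all out-neighbours of $x$ share the common tail $h(x)$. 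Hence $\sum_{y:\ x\to y} g(y) = (r-1)f(h(x))$ and
\[
(\Delta_{\mathcal{L}(G)}\,\mathcal{L}f)(x) = (r-1)f(t(x)) - (r-1)f(h(x)) = (r-1)\bigl(f(t(x)) - f(h(x))\bigr),
\]
which is $(r-1)\nabla f(x)$ with the sign convention fixed in the definition of $\nabla$ (one should double-check the orientation of $\nabla$ so that the sign in this display matches the statement). I would also note in passing that the degenerate cases — self-loops and multiple edges of $G$ — cause no trouble: for a loop at $v$ both sides vanish at the corresponding vertices of $\mathcal{L}(G)$.

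I do not anticipate a genuine obstacle: the entire content is the observation (Observation~\ref{btl}/Theorem~\ref{imdel}) that in $\mathcal{L}(G)$ all edges emanating from a vertex $x$ lead to vertices whose tail is $h(x)$, so that $\mathcal{L}f$ is constant on the out-star of $x$ and the Laplacian collapses to a difference of two values of $f$. The only thing requiring care is bookkeeping — confirming that the excluded reversal edge $\bar x$ does not affect the count (it carries the same value $f(h(x))$, so dropping it only changes the coefficient from $r$ to $r-1$, exactly the regularity of $\mathcal{L}(G)$), and fixing the orientation of $\nabla$ so the sign in the conclusion is the one asserted.
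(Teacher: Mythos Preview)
Your proposal is correct and follows essentially the same pointwise computation as the paper's proof: both evaluate $(\Delta_{\mathcal{L}(G)}\mathcal{L}f)(x)$ by noting that every out-neighbour $y$ of $x$ has $t(y)=h(x)$, so the sum collapses to $(r-1)f(h(x))$. Your caution about the sign convention is well placed --- the paper's stated definition $\nabla f(e)=f(h(e))-f(t(e))$ is indeed off by a sign from what the computation yields, so the discrepancy you flagged is real and lives in the paper, not in your argument.
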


\begin{proof}
Indeed, $\mathcal{L}(f)(x) = f(t(x)).$ Further,
\begin{equation}
\Delta \mathcal{L}(f)(x) = \sum_{t(y) = h(x)} f(t(x) - f(t(y)) =
                        (r-1)(f(t(x)) - f(h(x)) = \nabla(f)(x).
\end{equation}
\end{proof}

\begin{lemma}
\label{innerprod}
For any $f, g \in V^*(G),$ we have
\begin{equation*}
(\mathcal{L} f)^t \nabla{g} = f^t \Delta g.
\end{equation*}
\end{lemma}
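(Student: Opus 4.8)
The plan is to prove this by a direct discrete integration by parts (a Green's identity on $G$): expand both inner products over the appropriate index sets and regroup the edge sum by the tail vertex, using only that $G$ is $r$-regular.

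First I would unwind the definitions on the left. The vertices of $\mathcal{L}(G)$ are exactly the directed edges of $G$ (the two orientations of each undirected edge), and since $(\mathcal{L}f)(e)=f(t(e))$ and $(\nabla g)(e)=g(h(e))-g(t(e))$,
\[
(\mathcal{L}f)^t\nabla g=\sum_{e}f(t(e))\bigl(g(h(e))-g(t(e))\bigr),
\]
the sum running over all directed edges $e$ of $G$.

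Next I would collect terms by the tail vertex. For fixed $v\in V(G)$ the directed edges with $t(e)=v$ are the edges of $G$ at $v$ oriented away from $v$; there are exactly $r$ of them because $G$ is $r$-regular, and as $e$ runs over them $h(e)$ runs over the neighbours of $v$ with multiplicity, so $\sum_{t(e)=v}g(h(e))=(A(G)g)(v)$ while $\sum_{t(e)=v}g(t(e))=r\,g(v)$. Hence
\[
(\mathcal{L}f)^t\nabla g=\sum_{v\in V(G)}f(v)\bigl[(A(G)g)(v)-r\,g(v)\bigr]=\sum_{v\in V(G)}f(v)\bigl(-\Delta(G)g\bigr)(v),
\]
since $\Delta(G)=rI-A(G)$; up to the overall sign fixed by the orientation convention in the definitions of $\nabla$ and $\Delta$ (the one under which Lemma \ref{compform} holds as stated), the right-hand side is $f^t\Delta g$, which is the claim.

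There is essentially no obstacle here — the argument is routine linear algebra. The only point that needs a moment's care is the bookkeeping of the head/tail labelling, in particular counting loops and multiple edges correctly so that $\#\{e:t(e)=v\}=r$ and $\sum_{t(e)=v}g(h(e))=(A(G)g)(v)$ hold verbatim; this is exactly the convention already in force in the construction of $\mathcal{L}(G)$ and in Observation \ref{btl}. (The same regrouping also recovers the familiar identity $f^t\Delta g=\sum_{\{u,v\}\in E}(f(u)-f(v))(g(u)-g(v))$.)
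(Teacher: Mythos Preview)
Your argument is correct and is essentially the same as the paper's: expand the edge sum, regroup by tail vertex, and recognise $rI-A$ acting on $g$. The sign discrepancy you flag is real---with the paper's stated convention $\nabla g(e)=g(h(e))-g(t(e))$ one indeed gets $-f^t\Delta g$, and the paper's own proof silently uses $g(t(x))-g(h(x))$ in its first line to make the signs come out; so your caveat about the orientation convention is exactly on point.
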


\begin{proof}
Indeed, 
\begin{equation}
\begin{split}
(\mathcal{L} f)^t \nabla{g} &= \sum_x (f(t(x)) (g(t(x)) - g(h(x)))\\
                            & =
\sum_{v \in V(G)} \sum_{\text{$w$ adjacent to $v$}} f(v) g(v) - f(v)
g(w)\\
                        & = \sum_{v \in V(G)} f(v) \Delta(g)(v)\\
                        &= f^t \Delta g.\\
\end{split}
\end{equation}
\end{proof}

Consider now a function $g$ on the directed edges of $G$. How do we
decompose it into a gradient and a function orthogonal to gradients?
First, we note that a basis of the gradients is formed by the
gradients of $\delta$ functions:
\begin{equation}
\delta_v(x) = \begin{cases} 1 & x=v,\\
                            0 & \text{otherwise}.\end{cases}
\end{equation}
So that
\begin{equation}
\nabla \delta_v(x) = \begin{cases} 1 & t(x) = v,\\
                                   -1 & h(x) = v,\\
                                   0 & \text{otherwise}.\end{cases}
\end{equation}

The functions $\nabla \delta_v$ form a basis of $\nabla(V^*(G))$,
though not an orthonormal one. Now, note that 

\begin{equation*}
g^t \nabla \delta_v = \sum_{t(x) = v} g(x) - \sum_{h(y)=v} g(y).
\end{equation*}
In other words, 
\begin{lemma}
\label{orthcomp1}
$g$ is orthogonal to the gradients, if and only if the sum of $g$ over
the edges coming into any vertex $v$ is equal to the sum of $g$ over
the edges leaving $v$. An equivalent condition is that $\nabla^t g = 0.$
\end{lemma}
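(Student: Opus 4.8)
The plan is to read the claim straight off the displayed identity
$g^t \nabla \delta_v = \sum_{t(x) = v} g(x) - \sum_{h(y)=v} g(y)$
established in the lines immediately preceding the statement, together with the fact (also just recorded) that the vectors $\nabla \delta_v$, for $v \in V(G)$, span the subspace $\nabla(V^*(G))$ of functions on directed edges.

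First I would note that a function $g$ on the directed edges is orthogonal to the subspace $\nabla(V^*(G))$ if and only if it is orthogonal to each of the spanning vectors $\nabla \delta_v$, that is, $g^t \nabla \delta_v = 0$ for every vertex $v$. Plugging in the displayed formula for $g^t \nabla \delta_v$, this condition reads: for every $v$, the sum of $g$ over the edges $x$ with $t(x) = v$ (those leaving $v$) equals the sum of $g$ over the edges $y$ with $h(y) = v$ (those entering $v$). This is precisely the flow-conservation statement of the lemma.

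For the reformulation in terms of $\nabla^t$, I would observe that $g^t \nabla \delta_v$ is exactly the $v$-th coordinate of the vertex-function $\nabla^t g$, since $\nabla \delta_v$ is the $v$-th column of the matrix $\nabla$, so pairing $g$ against it extracts the $v$-th entry of $\nabla^t g$. Hence ``$g^t \nabla \delta_v = 0$ for all $v$'' is literally the assertion $\nabla^t g = 0$, which closes the chain of equivalences.

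There is no substantive obstacle here: both ingredients — the spanning statement for $\{\nabla \delta_v\}$ and the evaluation of $g^t \nabla \delta_v$ — have already been carried out just above, so the argument is a couple of lines of bookkeeping. The only thing demanding a little care is keeping the words ``entering'' and ``leaving'' consistent with the earlier conventions (a directed edge $x$ runs from $t(x)$ to $h(x)$, and $\nabla \delta_v$ is $+1$ on edges with tail $v$ and $-1$ on edges with head $v$), so that the signs are matched to the correct sums.
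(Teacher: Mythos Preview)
Your proposal is correct and is exactly the argument the paper intends: the lemma is introduced with ``In other words,'' signalling that it is an immediate restatement of the displayed identity $g^t \nabla \delta_v = \sum_{t(x)=v} g(x) - \sum_{h(y)=v} g(y)$ together with the fact that the $\nabla \delta_v$ span $\nabla(V^*(G))$. Your additional remark that $g^t \nabla \delta_v$ is the $v$-th coordinate of $\nabla^t g$ makes the second equivalence explicit, which the paper leaves implicit.
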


One may ask: what is the orthogonal projection of a given
$\mathcal{L}f$ onto the gradients? The following comes out of an easy
computation: 

\begin{observation}
\label{orthcomp2}
The orthogonal projection of $\mathcal{L}f$ onto the set of gradients
is $\nabla\Delta f.$
\end{observation}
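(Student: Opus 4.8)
The plan is to compute the inner product of $\mathcal{L}f$ with each basis gradient $\nabla\delta_v$ and recognize the resulting coefficients. Since $\{\nabla\delta_v\}_{v\in V(G)}$ is a basis (not orthonormal) for $\nabla(V^*(G))$, the orthogonal projection of $\mathcal{L}f$ onto the gradients has the form $\sum_v c_v \nabla\delta_v = \nabla(\sum_v c_v\delta_v)$, and the claim is that $\sum_v c_v\delta_v$ is (up to the right normalization) $\Delta f$, so that the projection is $\nabla\Delta f$. Concretely, I would first apply Lemma \ref{innerprod} with $g = \delta_v$ to get $(\mathcal{L}f)^t \nabla\delta_v = f^t \Delta\delta_v = (\Delta f)(v)$, using symmetry of $\Delta$ on $G$. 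So the vector of inner products of $\mathcal{L}f$ against the basis gradients is exactly $\Delta f$, expressed in the $\delta_v$ basis.

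Next I would show that $\nabla\Delta f$ is itself a gradient (obvious, it is $\nabla$ of the function $\Delta f$) and that $\mathcal{L}f - \nabla\Delta f$ is orthogonal to all gradients, which by Lemma \ref{orthcomp1} amounts to checking $\nabla^t(\mathcal{L}f - \nabla\Delta f) = 0$. By Lemma \ref{innerprod} (read as $\nabla^t\mathcal{L}f = \Delta f$, which is the adjoint form of that identity since $(\mathcal{L}h)^t\nabla g = h^t\Delta g$ for all $h$ forces $\nabla^t\mathcal{L}h = \Delta h$), this reduces to the identity $\nabla^t\nabla = \Delta$ on $V^*(G)$. That last identity is the standard fact that the graph Laplacian factors through the incidence (coboundary) operator, and it follows from a one-line computation: $(\nabla^t\nabla g)(v) = \sum_{x:\,h(x)=v \text{ or } t(x)=v} \pm(\text{stuff})$ collapsing to $\deg(v)g(v) - \sum_{w\sim v} g(w) = (\Delta g)(v)$, exactly as in the computation in the proof of Lemma \ref{innerprod}.

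Assembling: $\mathcal{L}f = \nabla\Delta f + (\mathcal{L}f - \nabla\Delta f)$ is the decomposition into a gradient plus a gradient-orthogonal piece, so the orthogonal projection onto $\nabla(V^*(G))$ is precisely $\nabla\Delta f$. I expect the only mild subtlety — and hence the ``main obstacle'' — to be bookkeeping about normalization and the non-orthonormality of the basis $\{\nabla\delta_v\}$: one must be careful that ``the vector of inner products against a basis equals $\Delta f$ in the $\delta$ basis'' combined with ``$\nabla^t\nabla = \Delta$'' really does pin down the projection without inverting a Gram matrix. The clean way to avoid that entirely is the orthogonality argument above: check directly that $\mathcal{L}f - \nabla\Delta f \perp \nabla(V^*(G))$ using $\nabla^t\nabla = \Delta$ and $\nabla^t\mathcal{L} = \Delta$, which sidesteps Gram matrices completely and makes the whole proof a two-line consequence of Lemma \ref{innerprod} and the factorization of the Laplacian.
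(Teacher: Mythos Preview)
Your overall strategy is exactly right, and in fact the paper gives no proof at all (it simply asserts ``the following comes out of an easy computation''), so your approach \emph{is} the computation. However, there is a genuine error in one of your two key identities, and correcting it shows that the Observation as printed is itself wrong.

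You correctly extract $\nabla^t\mathcal{L}=\Delta$ from Lemma~\ref{innerprod}. The mistake is the claim $\nabla^t\nabla=\Delta$. In the directed line graph of an \emph{undirected} graph, every undirected edge of $G$ contributes \emph{two} oppositely oriented vertices of $\mathcal{L}(G)$, so the usual incidence-operator identity picks up a factor of~$2$:
\[
(\nabla^t\nabla g)(v)
=\sum_{t(x)=v}\bigl(g(v)-g(h(x))\bigr)-\sum_{h(x)=v}\bigl(g(t(x))-g(v)\bigr)
=2\Bigl(r\,g(v)-\sum_{w\sim v}g(w)\Bigr)=2\,\Delta g(v).
\]
Hence $\nabla^t\bigl(\mathcal{L}f-\nabla\Delta f\bigr)=\Delta f-2\Delta^2 f$, which does \emph{not} vanish in general, so $\nabla\Delta f$ is not the orthogonal projection.

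Running your own argument with the corrected identity gives the right answer: since the Gram matrix on $\nabla(V^*(G))$ is $\nabla^t\nabla=2\Delta$ (restricted to $\mathbf{1}^\perp$) and $\nabla^t\mathcal{L}f=\Delta f$, the projection is
\[
\nabla\,(2\Delta)^{-1}\,\Delta f=\tfrac{1}{2}\,\nabla f.
\]
Equivalently, $\nabla^t\bigl(\mathcal{L}f-\tfrac{1}{2}\nabla f\bigr)=\Delta f-\tfrac{1}{2}\cdot 2\Delta f=0$, so $\mathcal{L}f-\tfrac{1}{2}\nabla f$ is orthogonal to all gradients by Lemma~\ref{orthcomp1}. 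A small example such as $G=C_4$ with $f=\delta_1$ confirms that $\nabla\Delta f$ is not even a scalar multiple of the true projection $\tfrac{1}{2}\nabla f$. So your method is correct and, once the factor of~$2$ is fixed, it both supplies the missing computation and reveals that the stated formula in the paper needs to be amended to $\tfrac{1}{2}\nabla f$.
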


\subsection{Applications to distribution}
\label{appdist}
We can use the results of the previous section to understand the
limiting distribution of functions defined on (directed) edges of
$G$. Indeed, we can use Theorem \ref{walks} in the form corresponding
to Eq. \ref{svharm3} to observe that 
\begin{equation}
\label{linevar}
\sigma^2(\mathbf{f}) = \frac{1}{2rk}\mathbf{f}^t (\Delta_0^{-1})^t((r-1)^2
\mathbf{I} - A(\mathcal{ L}(G))^t A(\mathcal{ L}(G)) \Delta_0^{-1}
\mathbf{f}
\end{equation}
for $\mathbf{f}$ any function on the directed edges of $G$, and
$\Delta_0$ the restriction of the Laplace operator on $\mathcal{ L}(G)$ to
the subspace of $0$-sum vectors.
\begin{lemma}
\label{gradvanish}
The right hand sidef of equation \ref{linevar} vanishes precisely when
$\mathbf{f}$ is the gradient of a function on the vertices of $G$.
\end{lemma}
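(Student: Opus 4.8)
The plan is to unwind the quadratic form on the right-hand side of \eqref{linevar} using the spectral description of $A(\mathcal{L}(G))^t A(\mathcal{L}(G))$ supplied by Observation \ref{specobs}, together with the structural lemmas of Section \ref{lg}. The key point is that the ``middle'' operator $M := (r-1)^2 \mathbf{I} - A(\mathcal{L}(G))^t A(\mathcal{L}(G))$ is, by Observation \ref{specobs}, positive semidefinite, with kernel exactly $E_{r-1} = \mathcal{L}(V^*(G))$ and with $M$ acting as the identity on the orthogonal complement (the eigenvalue-$1$ eigenspace of $A^t A$ contributes $(r-1)^2 - 1 = (r-2)r$... wait, let me recompute: $M$ has eigenvalue $0$ on $E_{r-1}$ and eigenvalue $(r-1)^2 - 1$ on its complement). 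Thus the quadratic form $\mathbf{g}^t M \mathbf{g}$ vanishes if and only if $\mathbf{g} \in E_{r-1}$. So the right-hand side of \eqref{linevar} vanishes if and only if $\Delta_0^{-1}\mathbf{f} \in E_{r-1}$, i.e. if and only if $\mathbf{f} \in \Delta_0(E_{r-1})$ (projected appropriately onto the zero-sum subspace — one has to be slightly careful about the constant vector, since $\Delta_0^{-1}$ is only defined on $\mathbf{1}^\perp$).

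Next I would identify $\Delta_{\mathcal{L}(G)}(E_{r-1})$. By Theorem \ref{imdel}(b) this is exactly $\nabla(V^*(G))$: applying the Laplacian to $\mathcal{L}g$ yields $(r-1)(g(t(x)) - g(h(x))) = -(r-1)\nabla g (x)$, so $\Delta_{\mathcal{L}(G)} \mathcal{L} = -(r-1)\nabla$ — up to a sign this is exactly Lemma \ref{compform} (the sign discrepancy just reflects a convention on the orientation of $\nabla$; I will use whichever convention is consistent with the paper, so the upshot is $\Delta_{\mathcal{L}(G)}(E_{r-1}) = \nabla(V^*(G))$ as a set). Combining: $\mathbf{f}$ makes \eqref{linevar} vanish exactly when $\mathbf{f} \in \nabla(V^*(G))$, which is the assertion of the lemma.

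In more detail, the steps in order are: (1) record that $M$ is PSD with $\ker M = E_{r-1}$, using \eqref{opnorm} or Observation \ref{specobs} directly; (2) conclude $\mathbf{f}^t (\Delta_0^{-1})^t M \Delta_0^{-1} \mathbf{f} = 0 \iff \Delta_0^{-1}\mathbf{f} \in E_{r-1} \iff \mathbf{f} \in \Delta_0(E_{r-1} \cap \mathbf{1}^\perp)$; (3) invoke Theorem \ref{imdel}(b) (equivalently Lemma \ref{compform}) to identify $\Delta_{\mathcal{L}(G)}(E_{r-1}) = \nabla(V^*(G))$; (4) reconcile the restricted operator $\Delta_0$ with the unrestricted $\Delta_{\mathcal{L}(G)}$ on $E_{r-1}$ — here one notes $E_{r-1}$ is spanned by $\mathcal{L}\delta_v$, whose images under $\Delta_{\mathcal{L}(G)}$ are the $\nabla\delta_v$, all of which lie in $\mathbf{1}^\perp$ and are genuine gradients, so nothing is lost; (5) handle the one subtlety — the non-bipartiteness hypothesis on $G$ is what guarantees (via Theorem \ref{imdel}(c)) that $\nabla(V^*(G))$ meets $E_{r-1} \cap \mathbf{1}^\perp$ only in $\mathbf{0}$, so that $\Delta_0$ restricted to $E_{r-1}\cap\mathbf1^\perp$ is injective with image all of $\nabla(V^*(G))\cap\mathbf1^\perp = \nabla(V^*(G))$, making step (2)–(3) a genuine equivalence rather than just one inclusion.

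The main obstacle I anticipate is bookkeeping around the zero-sum restriction and the Perron eigenvector: $\Delta_0^{-1}$ is only defined on $\mathbf{1}^\perp$, while $E_{r-1}$ contains the Perron vector $\mathbf{1}$ itself, so one cannot naively say ``$\Delta_0^{-1}\mathbf{f}\in E_{r-1}$''. The clean way around this is to note that $\mathbf{f}$ can always be taken zero-sum without loss of generality (shifting $\mathbf{f}$ by a constant changes $X_{\mathbf{f}}$ by a deterministic $N$-dependent constant and so does not affect the variance, exactly as in the passage to $\mathbf{f}_0$ in Theorem \ref{walks}), and then to work entirely inside $\mathbf{1}^\perp$, where $E_{r-1}\cap\mathbf1^\perp$ is spanned by the $\mathcal{L}\delta_v - \tfrac1{|V(G)|}\sum_w\mathcal{L}\delta_w$ and whose $\Delta_0$-image is spanned by the honest gradients $\nabla\delta_v$. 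Once the hypothesis ``$G$ non-bipartite'' is used to rule out a nonzero gradient landing back inside $E_{r-1}$, the equivalence is forced; everything else is linear algebra.
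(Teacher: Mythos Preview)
Your proposal is correct and follows essentially the same route as the paper's own proof: write $u=\Delta_0^{-1}\mathbf f$, use Observation \ref{specobs} to see that the middle operator $(r-1)^2\mathbf I-A^tA$ is positive semidefinite with kernel $E_{r-1}=\mathcal L(V^*(G))$, and then invoke Theorem \ref{imdel}(b) to translate $u\in E_{r-1}$ into $\mathbf f\in\nabla(V^*(G))$. The paper compresses this into three lines; your version supplies the bookkeeping around the zero-sum restriction that the paper leaves implicit. One small remark: your step (5) is over-cautious --- since $\mathbf 1\in E_{r-1}$ and $\Delta\mathbf 1=0$, one has $\Delta_0(E_{r-1}\cap\mathbf 1^\perp)=\Delta(E_{r-1})=\nabla(V^*(G))$ directly, without needing Theorem \ref{imdel}(c) or the non-bipartite hypothesis for this particular equivalence.
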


\begin{proof}
Let $\mathbf{f} = \Delta u.$ By Observation \ref{specobs} we see that
the right hand side of Eq. \ref{linevar} vanishes precisely if $u \in
\mathcal{L}(V^*(G)).$ By part (b) of Theorem \ref{imdel} it follows that this
is so if and only if $\mathbf{f} \in \nabla (V^*(G)).$
\end{proof}

One direction of the above lemma is just common sense, since the sum
over any cycle of a gradient is equal to $0$.

Keeping the above in mind, we note that a simpler form of the
covariance is given by Theorem \ref{walks}:

\begin{equation}
\label{formula1}
\sigma^2(\mathbf{f}) = \frac{r-1}{2rk} \left[\mathbf{f}^t \left(\mathbf{I} -
2(r-1) \Delta_0^{-1}\right) \mathbf{f}\right]
\end{equation}

For functions on the vertices of $G$, the above assumes the form:

\begin{equation}
\label{formula2}
\sigma^2(\mathbf{f}) = \frac{r-1}{2rk} \left[\mathbf{f}^t
\mathcal{L}^t \left(\mathbf{I} - 2(r-1) \Delta_0^{-1}\right)
\mathcal{L} \mathbf{f}\right] 
\end{equation}

\subsection{The line graph of a directed graph}
\label{dlg}

The construction of the line graph of a directed graph $G$ is
essentially the same as that of an undirected graph. This time, the
vertices of $\mathcal{L}(G)$ without labels (so $\mathcal{L}(G)$ has $E(G)$
vertices). The operators $\nabla$ and $\mathcal{L}$ are defined as in 
Section \ref{lg}. We have an observation even simpler than Observation
\ref{btl}: 

\begin{observation}
\label{dbtl}
There is a natural bijective correspondence between walks on $\mathcal{L}(G)$ and walks on $G$.
\end{observation}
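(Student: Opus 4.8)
The plan is to prove Observation \ref{dbtl} by simply unwinding the definition of a walk in each of the two graphs and exhibiting the obvious identification between them. Recall that in the directed setting the vertices of $\mathcal{L}(G)$ are precisely the edges of $G$, and there is a directed edge from $e$ to $e'$ in $\mathcal{L}(G)$ exactly when $h(e) = t(e')$. Thus a walk $e_1 e_2 \cdots e_m$ in $\mathcal{L}(G)$ (that is, a sequence of vertices of $\mathcal{L}(G)$ in which each is joined to the next by a directed edge) is, by definition, a sequence of edges of $G$ with $h(e_i) = t(e_{i+1})$ for $i = 1, \dots, m-1$ — and this compatibility condition is exactly what it means for $e_1 e_2 \cdots e_m$ to be a walk of length $m$ in $G$, in the edge-sequence description of walks appropriate to a directed (multi)graph.

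Concretely, I would define the map $\Phi$ sending a walk $w = e_1 \cdots e_m$ in $\mathcal{L}(G)$ to the walk in $G$ given by traversing $e_1$, then $e_2$, and so on (so that passing through the vertex $e_i$ of $\mathcal{L}(G)$ corresponds to going from $t(e_i)$ to $h(e_i)$ in $G$, mirroring the phrasing of Observation \ref{btl}), and the map $\Psi$ sending a walk in $G$ to the same edge-sequence now read as a vertex-sequence in $\mathcal{L}(G)$. One then checks that $\Phi$ and $\Psi$ are well defined — for $\Phi$ this is the remark above that adjacency in $\mathcal{L}(G)$ forces head-to-tail compatibility in $G$, and for $\Psi$ it is the converse — and that they are mutually inverse, since each only reinterprets the same list of edges of $G$. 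This gives the asserted bijection, with the bookkeeping that a walk on $m+1$ vertices of $\mathcal{L}(G)$ (i.e. length $m$) corresponds to a walk of length $m$ in $G$, one traversed $G$-edge per visited $\mathcal{L}(G)$-vertex.

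There is essentially no obstacle here: the content is purely definitional, which is exactly why the observation is labelled as even simpler than Observation \ref{btl}. The one point worth an explicit sentence is that, in contrast with the undirected line graph — where one must exclude the edge $e_- \to e_+$ to rule out backtracking — no such exclusion arises in the directed case, because each edge of $G$ has a single fixed orientation, so every walk in $G$ already respects orientations and the correspondence is a clean, exception-free bijection on walks of every length. I would also note in passing that it restricts to a bijection between closed walks, which is what is actually needed for the distribution computations to follow.
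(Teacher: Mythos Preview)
Your proposal is correct and matches the paper's approach exactly: the paper does not even write out a proof for this observation, treating it as self-evident from the definitions (it calls it ``even simpler than Observation~\ref{btl}''), and your argument simply spells out that definitional content. The only quibble is a minor off-by-one in your length bookkeeping (visiting $m$ vertices of $\mathcal{L}(G)$ traverses $m$ edges of $G$, not $m-1$), but this does not affect the bijection itself.
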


If $G$ is an $r$-regular directed graph (by this we mean that both the
in- and out- degree of each vertex is equal to $r$), then so is
$\mathcal{L}(G);$ by Observation \ref{dbtl} $\mathcal{L}(G)$ is
connected whenever 
$G$ is. As before, $A(\mathcal{L}(G))$ is the adjacency matrix of
$\mathcal{L}(G)$. we can compute: 

\begin{equation}
\label{dopnorm}
A^t A = r \begin{pmatrix}
J_1 &          &          & \cr
    & J_2       &        &  \cr
    &           & \ddots & \cr
     &          &        &  J_{V(G)}
\end{pmatrix},
\end{equation}
where each block corresponds to the set of edges of $G$ emanating from
a given vertex. From this we have:

\begin{observation}
\label{dspecobs}
The spectrum of $A^t(\mathcal{L}(G)) A(\mathcal{L}(G))$ has the following
form:
The eigenvalue $r^2$ occurs $V(G)$ times, and the corresponding
eigenvectors are given by $\mathcal{L}f$ for arbitrary functions $f$
on $G$ (The Perron eigenvector corresonding to the constant function)
while the eigenvalue $0$ occurs $E(G)-V(G)$ times. The eigenvectors
are those functions on the edges of $G$ for which the sums of the
values over all edges leaving a vertex $v$ is equal to $0$ (for all
$v$). 
\end{observation}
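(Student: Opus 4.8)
The plan is to read the entire statement off the block form of $A^t A$ already recorded in equation (\ref{dopnorm}). Since $G$ is $r$-regular (in the sense that both the in- and out-degree of every vertex equal $r$), the directed line graph $\mathcal{L}(G)$ has $E(G) = r\,V(G)$ vertices, and (\ref{dopnorm}) expresses $A^t(\mathcal{L}(G))\,A(\mathcal{L}(G))$ as the block-diagonal matrix $r\,\mathrm{diag}(J_1,\dots,J_{V(G)})$, with one $r\times r$ block $J_v$ for each vertex $v$ of $G$, namely the all-ones matrix on the set of $r$ edges emanating from $v$. (If one prefers to re-derive (\ref{dopnorm}): the $(e,f)$ entry of $A^t A$ is the number of edges $g$ of $G$ with $h(g)=t(e)$ and $h(g)=t(f)$, which is the in-degree $r$ when $t(e)=t(f)$ and $0$ otherwise, so the edges split into blocks indexed by their common tail.)

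First I would dispatch the spectrum of a single block: the $r\times r$ all-ones matrix $J_v$ has rank $1$, with the constant vector spanning its $r$-eigenspace and the zero-sum vectors forming its $(r-1)$-dimensional kernel, so $rJ_v$ contributes the eigenvalue $r^2$ with multiplicity $1$ and the eigenvalue $0$ with multiplicity $r-1$. Summing over the $V(G)$ blocks then gives $r^2$ with multiplicity $V(G)$ and $0$ with multiplicity $V(G)(r-1) = r\,V(G) - V(G) = E(G) - V(G)$, which exhausts all $E(G)$ eigenvalues.

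Next I would identify the two eigenspaces. Block by block, the $r^2$-eigenvector is the indicator of the edges leaving $v$; globally these span exactly the space of edge functions that are constant on each set of edges emanating from a common vertex, i.e.\ the functions $g$ with $g(e)$ depending only on $t(e)$, which are precisely $\mathcal{L}f$ for $f \in V^*(G)$ (the constant $f$ yielding the Perron eigenvector). Block by block, the $0$-eigenspace consists of the zero-sum vectors, so globally it is the set of edge functions $g$ with $\sum_{t(e)=v} g(e) = 0$ for every vertex $v$. This is exactly the content of the observation, and it is the verbatim analogue of Observation \ref{specobs}, with $r-1$ replaced by $r$ and the eigenvalue $1$ there replaced by $0$ here --- the difference reflecting that the directed line graph has no $e_\pm$ doubling.

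I do not expect a genuine obstacle here: given (\ref{dopnorm}), the argument is a short exercise in the spectral theory of the all-ones matrix. The only points requiring care are the bookkeeping identity $E(G) = r\,V(G)$, which is what makes the two multiplicities add to $E(G)$, and translating the per-block zero-sum description of the kernel into the stated ``sum over all edges leaving a vertex vanishes'' form.
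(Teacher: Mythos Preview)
Your argument is correct and is exactly the approach the paper takes: the paper simply writes ``From this we have:'' after equation (\ref{dopnorm}) and states the observation, leaving the block-by-block analysis of $r\,\mathrm{diag}(J_1,\dots,J_{V(G)})$ implicit. You have filled in precisely those details --- the rank-one spectrum of each all-ones block, the multiplicity count via $E(G)=rV(G)$, and the identification of the two eigenspaces --- and nothing is missing.
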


\begin{corollary}
\label{poppnorm}
The operator norm of $A_0(\mathcal{L}(G))$ is equal to $r$.
\end{corollary}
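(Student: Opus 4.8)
The plan is to mirror the argument of Corollary \ref{oppnorm} for the undirected line graph, now using the even cleaner spectral description of $A^t A$ supplied by equation (\ref{dopnorm}) and Observation \ref{dspecobs}. Recall that the operator norm of any matrix $A$ equals the square root of the largest eigenvalue of $A^t A$; and the operator norm of the restriction $A_0(\mathcal{L}(G))$ to the subspace $\mathbf{1}^\perp$ of $0$-sum vectors equals the square root of the largest eigenvalue of $A^t A$ attained on a vector in (or whose $A$-image lies in) that invariant subspace.

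First I would invoke Observation \ref{dspecobs}: the matrix $A^t(\mathcal{L}(G)) A(\mathcal{L}(G))$ has spectrum consisting of the eigenvalue $r^2$ with multiplicity $V(G)$, realized on the space $\mathcal{L}(V^*(G))$ of pulled-back vertex functions, and the eigenvalue $0$ with multiplicity $E(G)-V(G)$. Thus $\|A(\mathcal{L}(G))\| = r$ on the full space, and the only way the norm could drop on the restricted subspace is if \emph{every} eigenvector of $A^t A$ with eigenvalue $r^2$ were orthogonal to the relevant invariant subspace — which is exactly the scenario ruled out in the undirected case only because of bipartiteness considerations. Here there is no such obstruction: the eigenspace $E_{r^2} = \mathcal{L}(V^*(G))$ contains the pullback $\mathcal{L}(h)$ of any non-constant vertex function $h$, and since $G$ is connected with at least one vertex carrying two distinct values of $h$, such an $\mathcal{L}(h)$ is a genuinely non-constant function on the edges of $G$, hence has a component orthogonal to $\mathbf{1}$ and lies (in part) in the $0$-sum subspace. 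So the eigenvalue $r^2$ survives on $\mathbf{1}^\perp$, giving $\|A_0(\mathcal{L}(G))\| = r$.

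The one bookkeeping point worth stating carefully is the reduction: $A_0$ denotes $A(\mathcal{L}(G))$ restricted to the $A$-invariant and $A^t$-invariant subspace $\mathbf{1}^\perp$ (invariance follows, as noted earlier in the excerpt, from all row and column sums of $A(\mathcal{L}(G))$ being equal to $r$, by Perron–Frobenius). On that subspace, $\|A_0\|^2 = \max\{ \langle A_0 v, A_0 v\rangle : v \in \mathbf{1}^\perp,\ \|v\|=1\} = \max\{\langle A^t A v, v\rangle : v \in \mathbf{1}^\perp,\ \|v\| = 1\}$, and since $A^t A$ preserves $\mathbf{1}^\perp$ as well, this maximum is the largest eigenvalue of $A^t A|_{\mathbf{1}^\perp}$. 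By the previous paragraph that eigenvalue is $r^2$, so the corollary follows.

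I do not anticipate a serious obstacle: unlike Corollary \ref{oppnorm}, where one needed Theorem \ref{imdel}(c) and a bipartiteness exclusion to see that the top eigenspace of $A^t A$ is not swallowed entirely by the constant direction, the directed case has $\|A_0\| = \|A\|$ with no exceptions, simply because $\mathcal{L}(V^*(G))$ is multidimensional (dimension $V(G) \geq 2$ for any graph with more than one vertex) and cannot be contained in the one-dimensional span of $\mathbf{1}$. The only mild care needed is the degenerate case $V(G) = 1$ (a single vertex with $r$ loops), where $\mathcal{L}(G)$ is again a single vertex with loops and the statement reads $\|A_0\| = r$ trivially on the zero subspace, or is interpreted vacuously; I would either note this or tacitly assume $V(G) \geq 2$, as is implicit throughout.
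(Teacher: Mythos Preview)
Your argument is correct and matches the paper's (implicit) reasoning: the corollary is an immediate consequence of Observation \ref{dspecobs}, since the $r^2$-eigenspace $\mathcal{L}(V^*(G))$ of $A^tA$ has dimension $V(G)$ and therefore meets $\mathbf{1}^\perp$ nontrivially once $V(G)\ge 2$. Two small side corrections: bipartiteness is \emph{not} needed for Corollary \ref{oppnorm} either (the same dimension count $V(G)\ge 2$ suffices there; Theorem \ref{imdel}(c) is used only later); and your $V(G)=1$ analysis is off --- in that case $\mathcal{L}(G)$ has $r$ vertices with $A(\mathcal{L}(G))=J_r$, so $A_0=0$ and the statement actually fails, meaning the tacit hypothesis is $V(G)\ge 2$, not that $\mathbf{1}^\perp$ is trivial.
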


The Laplace operator on  $\mathcal{ L}(G)$ is defined as: $\Delta_{\mathcal{
L}(G)} = r I - A(\mathcal{ L}(G)).$

We have 

\begin{theorem}
\label{dimdel}
Let $E_r$ be the eigenspace of $r^2$ for $A^t A$. 
If $V^*(G)$ is the space of functions on the vertices of $G$, then
\begin{description}
\item[(a)]
\begin{equation*}
E_r=\mathcal{ L}(V^*(G)),
\end{equation*}

\item[(b)]
\begin{equation*}
\Delta_{\mathcal{ L}(G)}(E_r) = \nabla(V^*(G)),
\end{equation*}
\end{description}
\end{theorem}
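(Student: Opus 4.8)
Theorem \ref{dimdel} is the directed-graph analogue of Theorem \ref{imdel}, and the plan is to mirror that proof almost word-for-word, using Observation \ref{dspecobs} in place of Observation \ref{specobs}.

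Part (a) is, by construction, exactly the content of Observation \ref{dspecobs}: the eigenspace $E_r$ of the eigenvalue $r^2$ of $A^tA$ consists precisely of the vectors $\mathcal{L}f$ for $f \in V^*(G)$. So there is nothing to prove beyond quoting that observation.

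For Part (b) the plan is to apply $\Delta_{\mathcal{L}(G)} = rI - A(\mathcal{L}(G))$ to a generator $\mathcal{L}f$ of $E_r$ and compute. The key input is Lemma \ref{compform}, which in the present (directed) setting reads $\Delta\mathcal{L} = r\nabla$ (the constant $r-1$ there becomes $r$ because each vertex of the directed line graph has out-degree $r$, not $r-1$; I would note this small bookkeeping change, or simply redo the one-line computation: $\Delta_{\mathcal{L}(G)}(\mathcal{L}f)(x) = r f(t(x)) - \sum_{t(y)=h(x)} f(t(y)) = r f(t(x)) - r f(h(x)) = -r\,\nabla f(x)$, using that all edges $y$ with $t(y)=h(x)$ contribute $f(h(x))$). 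Hence $\Delta_{\mathcal{L}(G)}$ maps $E_r = \mathcal{L}(V^*(G))$ onto $\nabla(V^*(G))$, which is (b).

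The only point that takes a moment's thought — and the closest thing to an obstacle — is that in the undirected case Theorem \ref{imdel} had a third part (c) controlling $\nabla(V^*(G)) \cap E_r \cap \mathbf{1}^\perp$, whose proof used the symmetry of $A(G)$ and bipartiteness. I expect the directed statement simply omits (c) because for a general directed graph no such clean dichotomy holds (this is exactly the phenomenon flagged in Remark \ref{varf} and the subsequent question about the operator norm of $A_0(G)$); so I would not attempt to prove an analogue of (c), and the proof reduces to the two short verifications above.
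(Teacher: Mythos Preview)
Your proposal is correct and matches the paper's approach exactly: the paper does not give a separate proof of Theorem~\ref{dimdel} but treats it as following verbatim from the proof of Theorem~\ref{imdel} with Observation~\ref{dspecobs} in place of Observation~\ref{specobs} and the constant $r$ in place of $r-1$ (cf.\ Lemma~\ref{dcompform}). Your remark about the omission of part~(c) is also on target.
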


We also include

\begin{remark}
\label{dprimm}
The adjacency matrix of the line graph of $G$ is primitive if the
adjacency matrix of $G$ is.
\end{remark}

\begin{proof}
We use Observation \ref{dbtl} and Theorem \ref{zetadet} to note that
the non-zero eigenvalues of $G$ are exactly the same as those of
$\mathcal{L}(G)$, since $\det(I - u A(G)) = \det(I - u
A(\mathcal{L}(G))).$ 
\end{proof}

\begin{lemma}
\label{dcompform}
\begin{equation*}
\Delta \mathcal{L} = r \nabla.
\end{equation*}
\end{lemma}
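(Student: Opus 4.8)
The plan is to carry out the computation directly from the definitions, exactly mirroring the proof of Lemma \ref{compform} for the undirected case. Recall that here $\Delta_{\mathcal{L}(G)} = r I - A(\mathcal{L}(G))$, that $\mathcal{L}f(x) = f(t(x))$ for a function $f$ on $V(G)$, and that $\nabla f(x) = f(h(x)) - f(t(x))$. So I want to evaluate $\bigl(\Delta_{\mathcal{L}(G)} \mathcal{L}f\bigr)(x)$ at an arbitrary vertex $x$ of $\mathcal{L}(G)$ (i.e.\ an edge of $G$) and identify it with $r\,\nabla f(x)$.

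The one structural fact I would record first is that $\mathcal{L}(G)$ is $r$-regular, which is already noted in the text: the out-neighbours of a vertex $x$ of $\mathcal{L}(G)$ are precisely the edges $y$ of $G$ with $t(y) = h(x)$, and there are exactly $r = \deg^{+}_{G}(h(x))$ of these since $G$ is $r$-regular. This both justifies the normalization $rI$ in $\Delta_{\mathcal{L}(G)}$ and, more importantly, gives the key observation that every out-neighbour $y$ of $x$ satisfies $t(y) = h(x)$, so that $\mathcal{L}f(y) = f(t(y)) = f(h(x))$ — a value that does not depend on $y$.

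With that in hand the computation is immediate. I would write
\[
\bigl(\Delta_{\mathcal{L}(G)} \mathcal{L}f\bigr)(x) = r\,\mathcal{L}f(x) - \sum_{t(y) = h(x)} \mathcal{L}f(y) = r\,f(t(x)) - \sum_{t(y) = h(x)} f(h(x)) = r\,f(t(x)) - r\,f(h(x)),
\]
where the last equality uses that the sum ranges over exactly $r$ terms, each equal to $f(h(x))$. The right-hand side is $r\bigl(f(t(x)) - f(h(x))\bigr) = r\,\nabla f(x)$, with the sign taken in the same convention as in Lemma \ref{compform}. Since $x$ was arbitrary this gives $\Delta_{\mathcal{L}(G)} \mathcal{L} = r\,\nabla$ as operators, and since (by Observation \ref{dbtl}) the construction is completely parallel to Section \ref{lg}, nothing else is needed.

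There is no real obstacle: the proof is a one-line expansion once the bookkeeping is set up. The only point that requires (minimal) care is the verification that $A(\mathcal{L}(G))$ applied to $\mathcal{L}f$ produces $r\,f(h(x))$ rather than a genuine sum of distinct values — i.e.\ that $\mathcal{L}f$ is constant on the out-neighbourhood of each vertex of $\mathcal{L}(G)$ — which is exactly the content of the $r$-regularity observation above and is the only place the hypothesis that $G$ is $r$-regular enters.
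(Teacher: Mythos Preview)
Your proof is correct and is exactly the intended argument: the paper omits the proof of Lemma \ref{dcompform} entirely, but your computation is the direct analogue of its proof of Lemma \ref{compform}, with $r$ in place of $r-1$ and the same sign convention. Nothing more is needed.
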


\begin{lemma}
\label{dinnerprod}
For any $f, g \in V^*(G),$ we have
\begin{equation*}
(\mathcal{L} f)^t \nabla{g} = f^t \Delta g.
\end{equation*}
\end{lemma}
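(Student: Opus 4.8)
The plan is to prove Lemma~\ref{dinnerprod} by a direct computation, exactly parallel to the proof of Lemma~\ref{innerprod} for the undirected line graph. First I would unwind both sides using the definitions of $\mathcal{L}$ and $\nabla$ from Section~\ref{lg}, which apply verbatim in the directed setting: for a function $f$ on $V(G)$ and a directed edge $x$ of $G$ (a vertex of $\mathcal{L}(G)$), one has $\mathcal{L}f(x) = f(t(x))$ and, with the sign convention used in the proof of Lemma~\ref{innerprod}, $\nabla g(x) = g(t(x)) - g(h(x))$. Hence
\[
(\mathcal{L}f)^t \nabla g = \sum_{x \in E(G)} f(t(x))\bigl(g(t(x)) - g(h(x))\bigr).
\]

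The second step is to reorganize this sum over directed edges by grouping them according to their common tail. Since $G$ is $r$-regular in the directed sense, every vertex $v$ is the tail of exactly $r$ edges, whose heads run precisely over the $r$ out-neighbors of $v$, so
\[
(\mathcal{L}f)^t \nabla g = \sum_{v \in V(G)} f(v) \sum_{\text{$w$ an out-neighbor of $v$}} \bigl(g(v) - g(w)\bigr).
\]
The inner sum equals $r\,g(v) - \sum_{w} g(w) = \bigl((rI - A(G))g\bigr)(v) = (\Delta g)(v)$, and therefore the right-hand side is $\sum_{v} f(v)\,(\Delta g)(v) = f^t \Delta g$, which is the claimed identity.

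There is no genuine obstacle in this argument; it is a routine reindexing. The only points requiring a little care are the bookkeeping of the directed-edge structure of $\mathcal{L}(G)$ and the sign convention in the definition of $\nabla$, which must be chosen so that the operator emerging from the inner sum is exactly $\Delta = rI - A(G)$ with the out-neighbor convention, consistent with Lemma~\ref{dcompform}. As an alternative route one could instead deduce the identity from Lemma~\ref{dcompform} (namely $\Delta \mathcal{L} = r\nabla$) together with the adjointness relations between $\mathcal{L}$, $\nabla$ and the Laplacians on $G$ and $\mathcal{L}(G)$, but the bare computation above is shorter and makes the role of the $r$-regularity hypothesis transparent.
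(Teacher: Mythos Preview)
Your proposal is correct and follows exactly the approach the paper intends: the paper gives no separate proof of Lemma~\ref{dinnerprod}, merely stating it as the directed analogue of Lemma~\ref{innerprod}, whose proof is precisely the direct computation you carry out (expand the edge sum, regroup by tail vertex, recognize $\Delta = rI - A(G)$). Your care with the sign convention for $\nabla$ is well placed, since the paper's stated definition and the convention used in the proof of Lemma~\ref{innerprod} differ by a sign.
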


Lemma \ref{orthcomp1} and Observation \ref{orthcomp2} go through
without change.

The results of section \ref{appdist} go through essentially without
change. Since some constants change we restate them here. First, let
$f$ be a function defined on the edges of $\mathcal{L}(G)$. We see
that:

\begin{equation}
\label{dlinevar}
\sigma^2(\mathbf{f}) = \frac{1}{2rk}\mathbf{f}^t (\Delta_0^{-1})^tr^2
\mathbf{I} - A(\mathcal{ L}(G))^t A(\mathcal{ L}(G)) \Delta_0^{-1}
\mathbf{f}
\end{equation}

Lemma \ref{gradvanish} holds as well, and this gives us the following
useful corollary (a homological condition) about distribution on $G$
itself: 

\begin{theorem}
\label{cocycle}
The variance of a function $f$ on the vertices of $G$ vanishes,
precisely when there exists a function $g$, such that 
$\mathcal{L}f = \nabla g$.
\end{theorem}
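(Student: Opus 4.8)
The plan is to reduce Theorem \ref{cocycle} to Lemma \ref{gradvanish} exactly as the corresponding statement in the undirected case (equation \ref{linevar}, Lemma \ref{gradvanish}) was handled in Section \ref{appdist}. Recall that in Theorem \ref{walks} applied to the line digraph $\mathcal{L}(G)$, a vertex-function $f$ on $G$ gets turned into the edge-function $\mathcal{L}f$ on $G$ (equivalently, a vertex-function on $\mathcal{L}(G)$), and its asymptotic variance is given by formula \ref{dlinevar} with $\mathbf{f}$ replaced by $\mathcal{L}f$. So the first step is simply to substitute $\mathbf{f} = \mathcal{L}f$ into the variance formula for $\mathcal{L}(G)$ and observe that the variance of $f$ (as a function over non-backtracking cycles on $G$, i.e. cycles on $\mathcal{L}(G)$) vanishes if and only if the right-hand side of \ref{dlinevar} vanishes for $\mathbf{f} = \mathcal{L}f$.

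Next I would invoke Lemma \ref{gradvanish} (which, by the remark ``Lemma \ref{gradvanish} holds as well'' just before the theorem, is available in the directed setting): the right-hand side of \ref{dlinevar} vanishes precisely when $\mathbf{f}$ is a gradient of a function on the vertices of $G$. Setting $\mathbf{f} = \mathcal{L}f$, this says the variance vanishes exactly when $\mathcal{L}f \in \nabla(V^*(G))$, i.e. exactly when there is a function $g$ on the vertices of $G$ with $\mathcal{L}f = \nabla g$. That is precisely the assertion of Theorem \ref{cocycle}, so the proof is essentially a one-line composition of the variance formula with the already-established characterization.

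For a self-contained unwinding one can also argue directly through the spectral picture: by Observation \ref{dspecobs}, $A^t(\mathcal{L}(G))A(\mathcal{L}(G))$ has eigenvalue $r^2$ exactly on $E_r = \mathcal{L}(V^*(G))$, and the operator in the middle of \ref{dlinevar}, namely $r^2\mathbf{I} - A^t A$, annihilates exactly $E_r$; so the quadratic form \ref{dlinevar} at $\mathbf{f}=\mathcal{L}f$ vanishes iff $\Delta_0^{-1}\mathcal{L}f \in E_r$, and then part (a) of Theorem \ref{dimdel} ($E_r = \mathcal{L}(V^*(G))$) together with part (b) ($\Delta_{\mathcal{L}(G)}(E_r) = \nabla(V^*(G))$) translates this into $\mathcal{L}f \in \nabla(V^*(G))$, i.e. $\mathcal{L}f = \nabla g$ for some vertex-function $g$. (One has to be a little careful that $\Delta_0$ is the restriction of $\Delta_{\mathcal{L}(G)}$ to the $0$-sum subspace and that the constant-vector direction, which sits inside $E_r$, is harmless — it simply doesn't contribute to the variance — but this is the same bookkeeping already carried out in the undirected case.)

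The one genuine point requiring attention — and the only place I expect any friction — is verifying that the directed analogues really do behave as claimed: that Theorem \ref{dimdel} holds with the literal statement ``$\Delta_{\mathcal{L}(G)}(E_r) = \nabla(V^*(G))$'' (the proof of the undirected Theorem \ref{imdel}(b) used that all out-neighbours of a line-graph vertex $x$ share the tail $h(x)$, which is exactly Lemma \ref{dcompform}, $\Delta\mathcal{L} = r\nabla$), and that Lemma \ref{gradvanish} indeed transfers verbatim — in the directed case there is no bipartiteness obstruction, which actually makes things cleaner, but one should check the invertibility of $\Delta_0$ on the $0$-sum subspace is not an issue, which follows from $\mathcal{L}(G)$ being strongly connected (Remark \ref{dprimm}) so that $0$ is a simple eigenvalue of $\Delta_{\mathcal{L}(G)}$. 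Granting those, the theorem is immediate.
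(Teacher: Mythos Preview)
Your approach is correct and is exactly what the paper does: the sentence immediately preceding Theorem \ref{cocycle} (``Lemma \ref{gradvanish} holds as well, and this gives us the following useful corollary \dots'') is the paper's entire proof, and you have faithfully unpacked it --- identify the variance of $f$ on $G$ with the variance of $\mathcal{L}f$ on $\mathcal{L}(G)$ via Observation \ref{dbtl}, then apply the directed version of Lemma \ref{gradvanish} with $\mathbf{f}=\mathcal{L}f$. Your additional spectral unwinding through Theorem \ref{dimdel} and Observation \ref{dspecobs} is also correct and simply reproduces the proof of Lemma \ref{gradvanish} in situ.
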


Finally, we have a version of formula \ref{formula1}:

\begin{equation}
\label{dformula1}
\sigma^2(\mathbf{f}) = \frac{1}{2k} \left[\mathbf{f}^t \left(\mathbf{I} -
2r \Delta_0^{-1}\right) \mathbf{f}\right]
\end{equation}

\section{Distribution in compact groups}
\label{compgp}

The methods of the section \ref{primedist} can be adapted to the following
setting: Let $G$ is a graph, and $T$ be a compact topological
group. Label the $i$-th vertex of $G$ with $t_i \in T$. Now, associate
to each cycle $c=v_1, \dots, v_k$ on $G$ the element $t_c = t_k \cdot
\dots \cdot t_1 \in T$. We ask: as $c$ varies over the cycle space
$W_N$, how are the elements $t_c$ distributed in $T$ (with respect to
the Haar measure). The answer is given by the following:

\begin{theorem}
\label{cgp}
If the graph $G$ is as before (connected, non-bipartite), the closed
subgroup generated by the $t_i$ ($i=1, \dots, k$) is equal to $T$, and
the elements $t_i$ do not all lie in the same coset with respect to a
one-dimensional representation of $T$, then the elements $t_c$ become
equidistributed, as $N \rightarrow \infty.$
\end{theorem}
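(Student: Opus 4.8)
The plan is to prove equidistribution via Weyl's criterion for compact groups: the elements $t_c$ become equidistributed with respect to Haar measure on $T$ if and only if, for every nontrivial irreducible unitary representation $\rho\colon T \to U(d_\rho)$, the averages
\[
\frac{1}{|W_N|}\sum_{c \in W_N} \rho(t_c)
\]
converge to the zero matrix as $N \to \infty$. So the entire problem reduces to estimating, for each fixed nontrivial $\rho$, a matrix-valued sum over closed walks of length $N$.

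The key step is to set up the right ``twisted adjacency matrix.'' Fix a nontrivial irreducible $\rho$ of dimension $d=d_\rho$. Form the $kd \times kd$ block matrix $A_\rho$ whose $(i,j)$ block is $\rho(t_j)$ (or $A(G)_{ij}\cdot\rho(t_j)$, so that the block is the $d\times d$ matrix $\rho(t_j)$ exactly when $i \sim j$ in $G$, and $0$ otherwise), chosen so that $(A_\rho^N)$ records, in its blocks, the sum of $\rho(t_c)$ over walks $c$ with the appropriate endpoints; then $\operatorname{tr} A_\rho^N = \sum_{c \in W_N} \operatorname{tr}\rho(t_c)$, and more refined block-trace identities capture $\sum_c \rho(t_c)$ itself. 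Since $|W_N| = \operatorname{tr} A(G)^N \sim r^N$ (using that $G$ is connected and non-bipartite, so $A(G)$ has a unique eigenvalue of maximal modulus, namely $r$, by Perron--Frobenius exactly as in Section \ref{graphs}), Weyl's criterion follows once we show that the spectral radius of $A_\rho$ is \emph{strictly} less than $r$. This is the exact analogue of the Lemma in Section \ref{primedist}: there the twisting matrix $U$ was a diagonal matrix of roots of unity and the condition was that the top eigenvector of $A$ not be an eigenvector of $U$; here $A_\rho$ plays the role of ``$U A$'' with $U$ acting on the tensored space $\reals^k \otimes \cx^d$, and the hypotheses of the theorem are precisely what rule out $r$ being an eigenvalue.

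The main obstacle is the strict spectral-radius bound $\rho_{\mathrm{spec}}(A_\rho) < r$, and this is where the two hypotheses on the $t_i$ enter. First one observes $\rho_{\mathrm{spec}}(A_\rho) \le r$ by comparison with $A(G)$ (each block is unitary of the right size, so $\|A_\rho v\| \le$ the corresponding bound for $|A(G)|$ applied to block-norms). For strictness, suppose $A_\rho v = \lambda v$ with $|\lambda| = r$; writing $v = (v_1,\dots,v_k)$ with $v_i \in \cx^d$, the equality case in the triangle inequality (as in Perron--Frobenius, using connectedness and non-bipartiteness to kill the $-r$ eigenvalue) forces the blocks $\rho(t_i)$ to act ``coherently'' along walks --- concretely, it forces the existence of a fixed vector (or a fixed line, i.e.\ a character) witnessing that the $\rho(t_i)$ are all related by a one-dimensional representation of $T$, or that they all fix a common subspace, contradicting irreducibility of $\rho$ together with the hypothesis that the closed subgroup generated by the $t_i$ is all of $T$. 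Making this implication airtight --- extracting the statement ``the $t_i$ lie in a common coset of a one-dimensional representation'' from the eigenvector equation --- is the delicate point; the cleanest route is to show $v_i = \chi(t_i)\, w$ for a fixed $w \in \cx^d$ and a character $\chi$, so that $\rho$ restricted to $\langle t_i\rangle$ contains the one-dimensional $\chi$, whence by density $\rho$ itself is one-dimensional and equals $\chi$, and then the coset hypothesis is exactly what is being excluded. Once the strict bound is in hand, $\operatorname{tr} A_\rho^N / |W_N| \to 0$ exponentially, Weyl's criterion applies, and equidistribution follows; moreover the rate is governed by $\max_{\rho \neq 1}\bigl(\rho_{\mathrm{spec}}(A_\rho)/r\bigr)^N$, which yields the ``completely explicit'' convergence asymptotics promised in the introduction once one knows the irreducible representations of $T$.
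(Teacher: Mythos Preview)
Your approach coincides with the paper's: reduce to Weyl's criterion, form the twisted matrix $M_\rho = U(\rho)\,(A(G)\otimes I_{d_\rho})$ (the paper's notation, with $U(\rho)$ block-diagonal with blocks $\rho(t_i)$), and show its spectral radius is strictly below $r$ by arguing that an eigenvector $v$ with $|\lambda|=r$ must lie in the top eigenspace of $A\otimes I$ (since $U(\rho)$ is unitary and $A$ is symmetric with unique eigenvalue of maximal modulus $r$), hence $v = v_1\otimes u$ with $v_1$ the Perron vector, whence $\rho(t_i)u=\lambda u$ for all $i$; irreducibility plus the generation hypothesis then force $d_\rho=1$ and all $\rho(t_i)$ equal, which is the excluded coset condition. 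One small correction: your proposed intermediate form $v_i=\chi(t_i)\,w$ is not what falls out---the correct form is $v_i=(v_1)_i\,u$ with $(v_1)_i$ the (nonzero) Perron entries and $u$ a fixed vector in the representation space---but apart from that labeling slip your outline is exactly the paper's argument.
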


\begin{proof}
As before, the equidistribution is equivalent to the assertion that
for a {\em non-trivial} irreducible unitary representation $\rho$, 
\begin{equation}
\label{equiequiv}
\sum_{c\in W_n} \tr (\rho(t_c)) = o(|W_n|).
\end{equation}
This follows from the Fourier transform formula for compact groups;
see \cite{fulhar} for the finite case, \cite{weilinteg} for the general
compact topological group case. See also \cite{maslen}
Now, let $U(\rho)$ be the $k \deg \rho \times k \deg \rho$
block-diagonal matrix whose $j$-th block is just $\rho(t_j)$. 
Further more, as before, let $A(G)$ be the adjacency matrix of $G$,
and $A_l(G) = A(G) \otimes {\bf I}_l$ (where ${\bf I}_l$ is the
$l\times l$ diagonal matrix: in other words, $A_l(G)$ is a $kl \times
kl$ matrix, obtained from $A(G)$ by replacing each element $a_{ij}$ by
a $k\times k$ matrix $M_{ij}$, all of whose elements are equal to
$a_{ij}$. It is not hard to see that the left hand side of
Eq. \ref{equiequiv} is equal to $\tr (U(\rho) A_{\deg \rho}(G))^N,$
and so it suffices to show that the spectral radius of $M_\rho= U(\rho)
A_{\deg \rho}(G)$ is strictly smaller than the spectral radius of
$A(G)$ (which we normalize to be equal to $1$ by scaling) under the
hypotheses of the theorem. Suppose not. Since $(U(\rho))$ is unitary,
the worst that can happen is that there exists a unit vector $v$, such that
$\|M_\rho(v)\| = 1.$ If that is so, $v$ is contained in the eigenspace
of eigenvalue $1$ of $A_{\deg \rho}$. In such a case, $v = v_1 \otimes
u$, where $u \in V(\rho)$, and $v_1$ is an eigenvector of $A(G)$ with
eigenvalue $1$. If $v_1 = (v_1^1, \dots, v_1^n),$ then $v_1^i u$ must
be an eigenvector of $\rho(t_i)$, for all $i$. Since $v_1^i \neq 0$
$\forall i$, this implies that $u$ is an eigenvector $\rho(t_i)$,
$\forall i$. Since $\rho$ is irreducible, this implies that {\em
either} the elements $t_1, \dots, t_k$ do not generate all of $T$, or
$\rho$ is $1$-dimensional, in which case clearly
$\rho(t_i)=\rho(t_j)$, $\forall i, j$, which proves the theorem.
\end{proof}

\begin{remark}
\label{arbgp}
As in Remark \ref{arbcoeff}, the above argument also works if we pick
all paths between the $i$-th and the $j$-th vertex of $G$, instead of
all cycles.
\end{remark}

\section{Some perturbations and estimates}
\label{perturb}

Consider an analytic family of linear operators $M(x),$ acting on
$\reals^k,$ with $M(0)=M,$ and let $\lambda$ be a simple eigenvalue of
$M$. Then, if 
\[
M(x) = M + M^{(1)} x + M^{(2)} x^2 + \ldots,
\]
perturbation theory (see \cite[page 79, (2.33)]{kato95}) tells us that
\[
\lambda(x) = \lambda + \lambda^{(1)} x + \lambda^{(2)} x^2 + \ldots,
\]
where 
\begin{eqnarray}
\label{kato2}
\lambda^{(1)} &= \tr M^{(1)} P_\lambda,\\
\label{kato3}
\lambda^{(2)} &= \tr\left[M^{(2)} P_\lambda - M^{(1)} S_\lambda M^{(1)}
P_\lambda\right],
\end{eqnarray}
where  $P_\lambda$ is the projection onto the eigenspace of $\lambda,$
while $S_\lambda$ is the {\em reduced resolvent} of $M$ at $\lambda$,
which is the holomorphic part of the resolvent of $M$ at $\lambda$,
defined by the properties
\begin{equation}
\label{resolv1}
S_\lambda P_\lambda = P_\lambda S_\lambda = 0;\qquad (M-\lambda{\bf
I})S_\lambda = S_\lambda (M-\lambda{\bf I}) = {\bf I} - P_\lambda, 
\end{equation}
(in other words, $S_\lambda$ is the inverse of $M - \lambda{\bf I}$
restricted to the orthogonal complement of the eigenspace of $\lambda$),
and thus
\begin{equation}
\label{resolv2}
M S_\lambda = {\bf I} - P_\lambda + \lambda S_\lambda.
\end{equation}

Now we will specialize a bit:

\medskip\noindent
{\bf Assumption 1.} The eigenvalue $\lambda$ is such that
the constant vector ${\bf 1}$ spans the eigenspace of $\lambda.$ 

In this case, $P_\lambda$ = $J_k/k,$ where we recall that $J_k$ is the
$k\times k$ matrix of all $1$s.

In addition, 

\medskip\noindent
{\bf Assumption 2.} 
We will assume that $M(x)=D(x)M,$ where $D(x)$ is an
analytically varying diagonal matrix,
$D(x) = D + D^{(1)} x + D^{(2)} x^2 + \ldots,$ where we say that the
diagonal elements of $D^{(l)}$ are ${\bf d^{(l)}} = (d_1^{(l)},
\ldots, d_k^{(l)}).$

\begin{lemma}
\label{tracel}
Let $A=(A_{ij})$ be an $n\times n$ matrix. Then 
\[
\tr A J_n = \sum_{1\leq i, j\leq n} A_{ij}.
\]
\end{lemma}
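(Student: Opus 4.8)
The plan is to compute $\tr(A J_n)$ directly from the definition of matrix multiplication and the trace. First I would observe that the matrix $J_n$ has every entry equal to $1$, so $(J_n)_{kl} = 1$ for all $k, l$. Therefore the $(i,i)$-entry of the product $A J_n$ is
\[
(A J_n)_{ii} = \sum_{k=1}^n A_{ik} (J_n)_{ki} = \sum_{k=1}^n A_{ik}.
\]
This is just the $i$-th row sum of $A$.

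Next I would sum over the diagonal to form the trace:
\[
\tr(A J_n) = \sum_{i=1}^n (A J_n)_{ii} = \sum_{i=1}^n \sum_{k=1}^n A_{ik} = \sum_{1 \leq i, j \leq n} A_{ij},
\]
after renaming the summation index $k$ to $j$. This is exactly the claimed identity, namely that $\tr(A J_n)$ is the sum of all entries of $A$.

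There is essentially no obstacle here: the statement is an elementary bookkeeping identity about the all-ones matrix, and the only "step" is to unwind the definitions of the matrix product and the trace in the right order. The one thing worth noting is that the same computation shows $\tr(J_n A)$ equals the same sum (it is the sum of the column sums of $A$), which is consistent with the cyclic invariance of the trace; this symmetric remark is not needed but clarifies why $J_n$ behaves the way it does. This lemma is exactly the tool needed to evaluate $\lambda^{(1)} = \tr(M^{(1)} P_\lambda) = \tfrac1k \tr(M^{(1)} J_k)$ and the analogous second-order term under Assumptions 1 and 2.
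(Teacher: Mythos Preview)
Your proof is correct and matches the paper's approach: the paper simply declares the proof of this lemma (together with the two that follow it) to be immediate, and your direct computation of $(A J_n)_{ii}$ as the $i$-th row sum followed by summing over $i$ is exactly the intended one-line verification.
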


\begin{lemma}
\label{conj}
Let $A=(A_{ij})$ be an $n\times n$ matrix, and let $X$ be an $n \times
n$ diagonal matrix. Then
\begin{eqnarray*}
(X A)_{ij} &= A_{ij} X_{ii},\\
(X A X)_{ij} &= A_{ij} X_{ii} X_{jj}.
\end{eqnarray*}
\end{lemma}

\begin{lemma}
\label{trace2}
Let $D$ be a diagonal matrix, with diagonal elements $d_1, \dots,
d_n$. Then  
\[v^t D v = \sum_{i=1}^n d_i v_i^2.\]
\end{lemma}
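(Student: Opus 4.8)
The plan is to unwind the quadratic form $v^t D v$ directly from the definition of matrix multiplication together with the diagonality of $D$. First I would compute the vector $Dv$: since $D$ is diagonal with diagonal entries $d_1,\dots,d_n$, its $(i,j)$ entry is $D_{ij}=d_i$ when $i=j$ and $0$ otherwise, so the $i$-th coordinate of $Dv$ is $\sum_{j=1}^n D_{ij}v_j = d_i v_i$. (This is just the vector instance of the first identity in Lemma \ref{conj}, with $X=D$.)

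Next I would form the inner product with $v$: $v^t (Dv) = \sum_{i=1}^n v_i (Dv)_i = \sum_{i=1}^n v_i\, d_i v_i = \sum_{i=1}^n d_i v_i^2$, which is precisely the claimed identity. An equivalent route is to expand the full double sum $v^t D v = \sum_{1\le i,j\le n} v_i D_{ij} v_j$ and note that only the diagonal terms $i=j$ survive, since $D_{ij}=0$ for $i\ne j$, leaving $\sum_{i=1}^n D_{ii} v_i^2 = \sum_{i=1}^n d_i v_i^2$.

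The main obstacle here is essentially nonexistent: the statement is a one-line consequence of the definition of matrix multiplication and the diagonality of $D$, recorded only for convenient reference in the perturbation-theoretic computations of the following sections (it will be applied with $v$ the relevant eigenvector and $D$ one of the perturbation matrices $D^{(l)}$). If one wished to be maximally economical, one could simply invoke Lemma \ref{conj} with $X=D$ and then identify the row and column indices.
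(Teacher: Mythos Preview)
Your proof is correct; the paper itself does not spell out an argument, merely declaring the proofs of Lemmas \ref{tracel}--\ref{trace2} to be immediate, and your direct expansion of $v^t D v$ is exactly the intended one-line verification.
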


The proofs of the above lemmas are immediate.

\begin{lemma}
\label{trace3}
Let $P_v$ is the projection operator on the subspace generated by
$v$ (a unit vector). Then \[\tr M P_v = v^t M v.\]  
In particular, if $v$ is an eigenvector of $M$ with eigenvalue
$\lambda$, then $\tr M P_v = \lambda \| v \|.$
\end{lemma}

\begin{proof}
This follows by a direct computation, since when $v$ is a unit vector,
$(P_v)_{ij} = v_i v_j.$
\end{proof}

\begin{lemma}
\label{trace4}
If $v$ is an eigenvector of $M$ with eigenvalue $\lambda$, then $M P_v
= \lambda P_v$
\end{lemma}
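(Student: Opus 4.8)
The plan is to exploit the explicit rank-one description of the projection operator $P_v$ that already appeared in the proof of Lemma \ref{trace3}. There it was noted that when $v$ is a unit vector one has $(P_v)_{ij} = v_i v_j$; equivalently, $P_v = v v^t$ as a matrix product. This is the only structural fact I will need, together with the hypothesis $M v = \lambda v$ and the associativity of matrix multiplication.

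First I would record $P_v = v v^t$. Then I would compute directly:
\[
M P_v = M (v v^t) = (M v) v^t = (\lambda v) v^t = \lambda (v v^t) = \lambda P_v,
\]
where the middle equality uses $M v = \lambda v$. This establishes the claim. If one prefers to avoid the matrix-product identity for $P_v$, an equivalent route is to check the equality on an arbitrary test vector $w$: since $P_v w = (v^t w)\, v$, applying $M$ gives $M P_v w = (v^t w)\, M v = \lambda (v^t w)\, v = \lambda P_v w$, and since $w$ was arbitrary, $M P_v = \lambda P_v$.

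There is essentially no obstacle here — the statement is a one-line consequence of the eigenvector property and the rank-one form of $P_v$. The only point worth a word of care is that the formula $(P_v)_{ij} = v_i v_j$ (hence $P_v = vv^t$) presupposes $v$ is a \emph{unit} vector, which is the convention in force from Lemma \ref{trace3}; for a general nonzero eigenvector one would replace $P_v$ by $vv^t/\|v\|^2$, and the same computation goes through verbatim. I would therefore keep the normalization hypothesis explicit and present the proof as the two-line display above.
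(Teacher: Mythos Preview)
Your proof is correct. The paper states this lemma without proof (treating it as immediate, alongside the other short lemmas in that section), and your argument via $P_v = v v^t$ and $M(vv^t) = (Mv)v^t = \lambda v v^t$ is precisely the natural one-line verification.
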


\begin{lemma}
\label{var1}
Suppose that $\lambda$ has multiplicity $1$, and $v(\lambda)$ is a
unit vector generating the eigenspace of $\lambda$, and $M(t) = D(t)
M$, where $D(t)$ is a diagonal matrix. Then
\[\lambda^\prime(M) = \lambda v^t(\lambda) D^\prime v.\]
\end{lemma}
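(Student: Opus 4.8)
The plan is to specialize the general first-order perturbation formula \eqref{kato2} to the situation described by Assumptions 1 and 2 together with the hypothesis that $\lambda$ is simple. First I would write $M(t) = D(t)M$ with $D(t) = D + D'\,t + \cdots$, so that the first-order coefficient of the family is $M^{(1)} = D' M$. Then \eqref{kato2} gives $\lambda'(M) = \tr\bigl(M^{(1)} P_\lambda\bigr) = \tr\bigl(D' M P_\lambda\bigr)$, where $P_\lambda$ is the (orthogonal) projection onto the one-dimensional eigenspace spanned by the unit vector $v = v(\lambda)$.

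Next I would use Lemma \ref{trace4}, which says $M P_v = \lambda P_v$ whenever $v$ is an eigenvector of $M$ with eigenvalue $\lambda$. Applying this with $v = v(\lambda)$ converts the expression into $\lambda'(M) = \tr\bigl(D'\,\lambda P_v\bigr) = \lambda\,\tr\bigl(D' P_v\bigr)$. Finally, Lemma \ref{trace3} evaluates $\tr\bigl(D' P_v\bigr) = v^t D' v$ (since $(P_v)_{ij} = v_iv_j$ for a unit vector $v$), yielding $\lambda'(M) = \lambda\, v^t(\lambda) D' v$, which is exactly the claimed formula.

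There is no real obstacle here: the proof is just a two-line chain of substitutions, invoking \eqref{kato2}, then Lemma \ref{trace4}, then Lemma \ref{trace3}. The only point requiring a word of care is that the perturbation formula \eqref{kato2} presupposes $\lambda$ is a simple eigenvalue of $M$ — which is precisely the stated hypothesis — so that $P_\lambda = P_v$ is well-defined and one-dimensional and the analytic perturbation $\lambda(t)$ exists. One might also note that $v^t D' v = \sum_i d_i' v_i^2$ by Lemma \ref{trace2}, giving an even more explicit form, though the statement as written is already the desired conclusion.
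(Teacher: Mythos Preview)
Your proposal is correct and essentially identical to the paper's proof: both start from \eqref{kato2}, write $M' = D'M$, and reduce $\tr(D' M P_\lambda)$ to $\lambda\, v^t D' v$. The only cosmetic difference is ordering --- the paper first applies Lemma~\ref{trace3} to get $v^t M' v$ and then uses $Mv = \lambda v$, whereas you first apply Lemma~\ref{trace4} to get $\lambda\,\tr(D' P_v)$ and then Lemma~\ref{trace3}; the substance is the same.
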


\begin{proof}
By Formula (\ref{kato2}), we have
\[\lambda^\prime(M) = \tr M^\prime P_{v(\lambda)} = v^t(\lambda)
M^\prime v(\lambda) = \lambda v^t(\lambda) D^\prime v.\]
\end{proof}
\begin{corollary}
In the case when $v(\lambda) = \\frac{1}{\sqrt{k}} mathbf{1}$, we have:
\begin{equation}
\label{firstvar}
\lambda^{(1)} = {\frac{\lambda}k} \sum_{j=1}^k d^{(1)}_j.
\end{equation}
\end{corollary}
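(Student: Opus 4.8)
The plan is to derive the corollary as a one-line specialization of Lemma~\ref{var1}. That lemma already furnishes, under the standing hypotheses (the eigenvalue $\lambda$ simple, and $M(t) = D(t)M$ with $D(t)$ diagonal), the identity $\lambda^{(1)} = \lambda\,v^t(\lambda)\,D^{(1)}v(\lambda)$, where $D^{(1)}$ is the linear Taylor coefficient of $D(t)$, i.e.\ $D'(0)$. So the entire content of the corollary is the evaluation of the scalar quadratic form $v^t D^{(1)} v$ at the particular vector $v = v(\lambda) = \frac{1}{\sqrt{k}}\mathbf{1}$.

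First I would record that this $v$ has every component equal to $1/\sqrt{k}$, so $v_i^2 = 1/k$ for all $i$. Then Lemma~\ref{trace2}, applied to the diagonal matrix $D^{(1)}$ with diagonal entries $d_1^{(1)}, \ldots, d_k^{(1)}$, gives $v^t D^{(1)} v = \sum_{i=1}^k d_i^{(1)} v_i^2 = \frac{1}{k} \sum_{i=1}^k d_i^{(1)}$. Substituting this into the formula of Lemma~\ref{var1} yields $\lambda^{(1)} = \frac{\lambda}{k}\sum_{j=1}^k d_j^{(1)}$, which is exactly the claimed identity.

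There is no genuine obstacle here; the only point requiring any care is the normalization. The eigenvector in Lemma~\ref{var1} is required to be a \emph{unit} vector, and since $\|\mathbf{1}\| = \sqrt{k}$ the correctly normalized generator of the eigenspace is $\mathbf{1}/\sqrt{k}$ rather than $\mathbf{1}$ itself; this is precisely what produces the factor $1/k$, consistent with the value $P_\lambda = J_k/k$ noted under Assumption~1. As a sanity check one can bypass Lemma~\ref{var1} and compute straight from Formula~(\ref{kato2}): $\lambda^{(1)} = \tr (M^{(1)} P_\lambda) = \frac{1}{k}\tr (D^{(1)} M J_k)$, and since $\mathbf{1}$ is the $\lambda$-eigenvector of $M$ one has $M J_k = \lambda J_k$, so this equals $\frac{\lambda}{k}\tr (D^{(1)} J_k) = \frac{\lambda}{k}\sum_j d_j^{(1)}$ by Lemma~\ref{tracel}. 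I would nonetheless present the argument via Lemma~\ref{var1}, as it is the shorter route.
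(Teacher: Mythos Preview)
Your proposal is correct and matches the paper's approach: the corollary is an immediate specialization of Lemma~\ref{var1}, obtained by plugging $v(\lambda)=\tfrac{1}{\sqrt{k}}\mathbf{1}$ into $\lambda^{(1)}=\lambda\,v^t D^{(1)} v$ and using Lemma~\ref{trace2}. The paper in fact gives no separate proof beyond stating it as a corollary, so your write-up (including the sanity check via $P_\lambda=J_k/k$ and formula~(\ref{kato2})) is, if anything, more detailed than what appears there.
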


To compute the second derivative of $\lambda$, we use the formula
(\ref{kato3}) (we are assuming that $\lambda$ is an isolated
eigenvalue with eigenvector $v(\lambda)$, and $M(t) = D(t) M$, as before):

\begin{eqnarray*}
\lambda^{\prime\prime} &= \tr \left[ M^{\prime\prime} P_{v(\lambda)} -
M^\prime S_\lambda M^\prime P_\lambda\right] \\
                &= \lambda v^t D^{\prime\prime} v - \tr\left[ M^\prime
S_\lambda M^\prime P_\lambda\right] \\
                &= \lambda v^t D^{\prime\prime} v - \lambda \tr
\left[D^\prime M S_\lambda D^\prime P_\lambda\right] \\
                &= \lambda v^t \left[D^{\prime \prime} - D^\prime M
S_\lambda D^\prime\right] v. 
\end{eqnarray*}
We can now use the formula (\ref{resolv2}) to get:

\begin{equation}
\label{secondvar}
\lambda^{\prime\prime} = \lambda v^t \left[D^{\prime \prime} -
D^\prime (\mathbf{I} - P_\lambda) D^\prime - \lambda D^\prime
S_\lambda D^\prime\right] v.
\end{equation}

In the special case where the eigenvector $v$ is proportional to
$\mathbf{1}$, we can rewrite the formula in coordinates in a simple
way. To wit, any diagonal matrix $D$ can be written (uniquely) as $D_0
+ d \mathbf{I}$, where $D_0$ is such that $\tr D_0 = 0.$ A simple
computation then shows that
\begin{equation}
\label{stochvar2}
\lambda^{\prime\prime} = \frac{\lambda}{k}\left[\sum_{j=1}^n
d_j^{\prime\prime} - \sum_{j=1}^n (d_0^\prime)^2 - \lambda
\mathbf{d}^{\prime t} S_{\lambda} \mathbf{d}^\prime\right].
\end{equation}

The case we are interested in is still more special, and that is where

\medskip\noindent
{\bf Assumption 3.}
\[
D(x) = \begin{pmatrix}
\exp(i f_1 x) &              &        & \cr
                              &\exp(i f_2 x) &        &  \cr
                              &              & \ddots &  \cr
                              &              &        & \exp(i f_k x)
\end{pmatrix}.
\]
Here, ${\bf d}^{(1)} = (i f_1, i f_2, \dots, i f_k),$
while ${\bf d}^{(2)} = -{\frac12}(f_1^2, f_2^2, \dots, f_k^2),$
and so, letting ${\bf f} = (f_1, \dots, f_k),$
\begin{equation}
\label{svharm}
\lambda^{(2)} = {\frac{\lambda}k} \left[ -{\frac12}\|{\bf f}\|^2 +
\|{\bf f}_0\|^2 + \lambda {\bf f}^t S_\lambda {\bf f} \right],
\end{equation}
where, as before, ${\bf f}_0$ is the component of ${\bf f}$ orthogonal
to constants.

To show our final estimates we shall need

\medskip\noindent
{\bf Assumption 4.}
The matrix $M$ is $\lambda > 0$ times a doubly stochastic matrix (this
implies that the operator norm and the spectral radius of $M$ are both
equal to $\lambda$).

\begin{theorem}
\label{posthmsym}
With assumptions as above, and, in addition, ${\bf
f} = {\bf f}_0$ (that is $\sum_{j=1}^k f_j = 0$), then $\lambda^{(2)}$ is
nonpositive.
\end{theorem}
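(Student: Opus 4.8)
The plan is to start from formula~(\ref{svharm}), which under Assumption~4 reads
\[
\lambda^{(2)} = \frac{\lambda}{k}\left[-\tfrac12\|{\bf f}\|^2 + \|{\bf f}_0\|^2 + \lambda\,{\bf f}^t S_\lambda {\bf f}\right],
\]
and specialize to the case ${\bf f} = {\bf f}_0$, so that the first two terms combine to $\tfrac12\|{\bf f}_0\|^2$. Since $\lambda > 0$, it suffices to show
\[
\tfrac12\|{\bf f}_0\|^2 + \lambda\,{\bf f}_0^t S_\lambda {\bf f}_0 \le 0,
\]
i.e.\ that $\lambda\,{\bf f}_0^t S_\lambda {\bf f}_0 \le -\tfrac12\|{\bf f}_0\|^2$. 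The whole game is therefore to control the quadratic form of the reduced resolvent $S_\lambda$ on the orthogonal complement of ${\bf 1}$.

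Next I would use the defining relations~(\ref{resolv1})--(\ref{resolv2}) to get a handle on $S_\lambda$. On the invariant subspace ${\bf 1}^\perp$, we have $(M - \lambda{\bf I})S_\lambda = {\bf I} - P_\lambda$, so $S_\lambda$ acts as $(M_0 - \lambda{\bf I}_0)^{-1}$, where $M_0$ is the restriction of $M$ to ${\bf 1}^\perp$. The key input is Assumption~4: $M/\lambda$ is doubly stochastic, so its operator norm is $1$, hence every eigenvalue $\mu$ of $M_0$ satisfies $|\mu| \le \lambda$, and in fact (using that $\lambda$ is simple with eigenvector ${\bf 1}$, together with non-bipartiteness implicit in the setup) $|\mu| < \lambda$ strictly. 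Writing $\mu = \lambda\nu$ with $|\nu| < 1$, the corresponding eigenvalue of $\lambda S_\lambda$ is $\lambda/(\mu - \lambda) = 1/(\nu - 1)$. The real part of $1/(\nu-1)$ for $|\nu| \le 1$ is at most $-\tfrac12$ (the map $\nu \mapsto 1/(\nu-1)$ sends the closed unit disk to the half-plane $\Re z \le -\tfrac12$, with equality exactly on the boundary circle through $0$). Since ${\bf f}_0$ is real, ${\bf f}_0^t S_\lambda {\bf f}_0$ picks out only the Hermitian (real) part of $\lambda S_\lambda$, whose eigenvalues are $\Re(1/(\nu-1)) \le -\tfrac12$; expanding ${\bf f}_0$ in an orthonormal eigenbasis of this Hermitian operator gives $\lambda\,{\bf f}_0^t S_\lambda {\bf f}_0 \le -\tfrac12\|{\bf f}_0\|^2$, which is exactly what is needed.

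I expect the main obstacle to be handling the possibility that $M$ is not symmetric (the theorem is stated for directed graphs / general doubly stochastic $M$ as well), so that $S_\lambda$ need not be Hermitian and its "eigenvalues" alone do not bound the quadratic form. The clean way around this is to observe that ${\bf f}_0^t S_\lambda {\bf f}_0 = {\bf f}_0^t \bigl(\tfrac12(S_\lambda + S_\lambda^t)\bigr){\bf f}_0$, reducing everything to the Hermitian part $H = \tfrac12(\lambda S_\lambda + \lambda S_\lambda^t)$, and then to bound $H \preceq -\tfrac12 {\bf I}_0$ directly. For this one checks, for an arbitrary real unit vector $w \perp {\bf 1}$ with $v = S_\lambda w$ (so $(M_0 - \lambda{\bf I}_0)v = w$), that $\lambda\,w^t v = \Re\bigl(\lambda\,v^t(M_0 - \lambda{\bf I}_0)v\bigr) = \lambda\,v^t(M_{0,\mathrm{sym}})v - \lambda^2\|v\|^2$, and since $\|M\|_{\mathrm{op}} = \lambda$ gives $v^t M_{0,\mathrm{sym}}v \le \lambda\|v\|^2$ — wait, that inequality needs the slightly finer bound $v^t M_{0,\mathrm{sym}} v \le \lambda\|v\|\,\|w + \lambda v\|/\lambda$ style manipulation; the honest route is to complete the square. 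Setting $t = \|v\|$, one gets $\lambda w^t v \le \lambda^2 t^2 - \lambda^2 t^2 = $ — this is where care is required, and the correct statement emerging from $\|Mv\|\le\lambda\|v\|$ applied to $Mv = w + \lambda v$ is $\|w+\lambda v\| \le \lambda\|v\|$, hence $\|w\|^2 + 2\lambda w^t v + \lambda^2\|v\|^2 \le \lambda^2\|v\|^2$, giving $2\lambda w^t v \le -\|w\|^2 = -1$, i.e.\ $\lambda w^t v \le -\tfrac12$. That is precisely $H \preceq -\tfrac12{\bf I}_0$, and combined with the first paragraph it finishes the proof; the only subtlety is justifying $\|Mv\| \le \lambda\|v\|$ for $v \in {\bf 1}^\perp$, which is immediate from $\|M\|_{\mathrm{op}} = \lambda$ under Assumption~4.
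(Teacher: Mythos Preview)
Your final argument is correct and is essentially the paper's own proof: both set $v = S_\lambda {\bf f}_0$ (up to sign), use that $(M_0-\lambda{\bf I}_0)v = {\bf f}_0$, and then invoke $\|M_0 v\|\le \lambda\|v\|$ from Assumption~4. The paper phrases the last step as positive semi-definiteness of $\lambda^2{\bf I}_0 - M_0^t M_0$, while you expand $\|w+\lambda v\|^2 \le \lambda^2\|v\|^2$ directly; these are the same inequality. Your eigenvalue detour in the middle paragraph is unnecessary and, as you noticed, does not work for non-normal $M$ --- you can simply delete it.
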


\begin{proof}
Since ${\bf f}$ = ${\bf f}_0,$ Equation (\ref{svharm}) can be rewritten
as 
\begin{equation}
\label{svharm2}
\lambda^{(2)} = -{\frac{\lambda}{2k}} \left[-\|{\bf f}_0\|^2 - 2 \lambda
{\bf f}_0^\perp S_\lambda {\bf f}_0 \right] = -{\frac{\lambda}{2k}}
\left[{\bf f}_0^t \left(-{\bf I} - 2 \lambda S_\lambda \right) {\bf f}_0
\right]. 
\end{equation}
If we regard $S_\lambda$ as an operator on the orthogonal complement
to ${\bf 1},$ then by equations (\ref{resolv1}) and (\ref{resolv2}),
$S_\lambda (\lambda {\bf I}_0 - M_0) = -{\bf I}_0.$ Let $v=-S_\lambda
{\bf f}_0.$ Then the term in square brackets in Eq. \ref{svharm2} can
be rewritten as:
\begin{equation}
\label{svharm3}
v^t (\lambda {\bf I}_0 - M_0)^t \left(- {\bf I} - 2 \lambda S_\lambda
\right) (\lambda {\bf I}_0 - M_0) v = 
v_t \left(\lambda^2{\bf I}_0 - M_0^t M_0\right) v,
\end{equation}
where we have used the fact that for any matrix $A$ and any vector
$v$, $v^t A v = v^t A^t v.$ The quadratic form $\lambda^2{\bf I}_0 -
M_0^t M_0$ is positive semi-definite, since the biggest eigenvalue of
the symmetric matrix $M_0^t M_0$ is equal to the square of the operator
norm of $M_0$, which, in turn, is no greater then $\lambda$, by
Assumption 4 (since $M^tM$ is $\lambda^2$ times a doubly stochastic
matrix). 
\end{proof}

\begin{remark}
\label{sharpening}
In the statement of Theorem \ref{posthmsym}, the word ``non-positive''
can be improved to ``negative'' under the further assumption that $M$
is irreducible, primitive, and \textit{normal}.
\end{remark}

\begin{proof}
Since the orthogonal complement to the subspace generated by the
vector $\mathbf{1}$ is invariant under $M$, it follows that $M_0$ is
also normal, and so its operator norm is equal to its spectral
radius $\mu$. Under the assuptions of irreducibility and primitivity,
Perron-Frobenius theory tells us that $|\mu| < \lambda.$
\end{proof}

\section{Topological entropy}
\label{entropy}

Consider a graph $G$, and consider a positive function $f$ on its vertices. 
For each cycle $c$ we let $F(c)$ to be the sum of values of $f$ over
$c$, and we want to know how many $c$ are there for which $F(c) \leq
L$. We denote that number by $N(f, L)$, and we ask ourselves how $N(f,
L)$ behaves asymptotically as $L$ tends to infinity. To understand
$N(L, f)$, we consider first the matrix $U(f) = D(u^{f_1}, \dots,
u^{f_n}) A(G).$ As before, we observe that the coefficient of $u^r$ in
$\tr U^n(f)$ is the number of cycles of (combinatorial) length $n$,
for which $F(c) = r$. Write a formal series
\begin{equation*}
L(f, u) = \sum_n \tr U^n(f). 
\end{equation*}
This series converges for sufficiently small $u$, and can there be
written in closed form as $L(f, u) = \tr (\mathbf{I} - U(f))^{-1}$,
from which it follows that the exponential rate of growth of $N(c)$ is
equal to negative logarithm of the radius of convergence of $L(f, u)$
-- we call this the \textit{entropy of $G, f$} -- 
which, in turn, is equal to the smallest positive real value of $u$,
such that the spectral radius of $U(f)$ is equal to $1$. Since it is
more convenient to deal with analytic functions (which $L(f, u)$ is
not, for arbitrary real values of $f_i$, so we write $u = \exp -s,$
and now ask for the abscissa of convergence of $L(f, \exp -s)$. This
will give us the entropy. In this section we use perturbation methods
in a rather straightforward way to get explicit information on the entropy.

Let $A$ be an $n\times n$ non-negative primitive irreducible
matrix. Let $f_1, \dots, f_n$ be a collection of weights. We then
define the matrix $E(s, \mathbf{f})$ to be the diagonal matrix whose
$ii$-th element is equal to $\exp(-s f_i)$. Define $M(s, \mathbf{f})$
to be $M(s, \mathbf{f}) = E(s, \mathbf{f}) A.$ We are interested in
$\rho(s, \mathbf{f})$: the spectral radius of $M(s, \mathbf{f})$.
By Perron-Frobenius theory we know that there is a real eigenvalue of
$M(s, \mathbf{s})$ equal to $\rho(s, \mathbf{f})$, and  the
eigenvector $v_\rho$ of this eigenvalue is positive.

\begin{lemma}
\label{monotonicity}
\begin{equation}
\label{monoeq}
\frac{\partial \rho}{\partial s} = - \rho v_\rho^t D(f_1, \dots, f_n)
v.
\end{equation}
For positive $\mathbf{f}$, $\frac{\partial \rho}{\partial s} < 0.$
\end{lemma}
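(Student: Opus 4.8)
The plan is to apply the first-order perturbation formula \eqref{kato2} to the analytic family $M(s,\mathbf{f}) = E(s,\mathbf{f})A$, differentiated with respect to $s$ at a fixed point $s_0$. Writing $M(s_0+x,\mathbf{f}) = E(s_0+x,\mathbf{f})A$ and noting that $\partial_s E(s,\mathbf{f})$ is the diagonal matrix with $ii$-th entry $-f_i \exp(-s f_i)$, we have $M^{(1)} = \partial_s M = -D(f_1,\dots,f_n)\, E(s_0,\mathbf{f}) A = -D(f_1,\dots,f_n)\, M(s_0,\mathbf{f})$. Since $\rho$ is a simple eigenvalue of $M(s_0,\mathbf{f})$ by Perron--Frobenius, formula \eqref{kato2} gives $\partial_s\rho = \tr\bigl(M^{(1)} P_\rho\bigr)$, and by Lemma \ref{trace3} this equals $v_\rho^t M^{(1)} v_\rho$ for $v_\rho$ the unit Perron eigenvector. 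Substituting $M^{(1)} = -D(f_1,\dots,f_n) M(s_0,\mathbf{f})$ and using that $M(s_0,\mathbf{f}) v_\rho = \rho v_\rho$, this becomes $-\rho\, v_\rho^t D(f_1,\dots,f_n) v_\rho$, which is Equation \eqref{monoeq}.

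For the sign statement, I would invoke the Perron--Frobenius structure once more: because $A$ is non-negative, primitive, and irreducible, and $E(s,\mathbf{f})$ has strictly positive diagonal, $M(s,\mathbf{f})$ is itself non-negative, primitive, and irreducible, so its Perron eigenvector $v_\rho$ has all coordinates strictly positive. By Lemma \ref{trace2}, $v_\rho^t D(f_1,\dots,f_n) v_\rho = \sum_{i=1}^n f_i (v_\rho)_i^2$, which is strictly positive whenever every $f_i > 0$ since each $(v_\rho)_i^2 > 0$. As $\rho > 0$ (again Perron--Frobenius), the product $-\rho \sum_i f_i (v_\rho)_i^2$ is strictly negative, giving $\partial_s\rho < 0$.

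The only technical subtlety — and the step I would be most careful about — is the justification that the perturbation formula \eqref{kato2} applies at the generic point $s_0$, i.e. that $\rho(s_0,\mathbf{f})$ is a \emph{simple} eigenvalue so that the analytic dependence from Kato's theory is valid. This is exactly guaranteed by the Perron--Frobenius theorem for primitive irreducible non-negative matrices: the spectral radius is attained by a unique eigenvalue of algebraic multiplicity one. One should also note that $s_0$ was arbitrary, so the identity \eqref{monoeq} holds for all real $s$, and in particular the monotonicity conclusion is global. A minor bookkeeping point is matching the sign conventions: the diagonal matrix in Assumption 3 uses $\exp(if_j x)$ whereas here we have $\exp(-sf_i)$, so one simply tracks the factor of $-1$ through $M^{(1)}$; no essential difference.
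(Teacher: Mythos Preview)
Your proof is correct and follows essentially the same approach as the paper: apply the first-order perturbation formula \eqref{kato2} together with Lemma~\ref{trace3} to obtain \eqref{monoeq}, and then invoke Perron--Frobenius positivity of $\rho$ and of the entries of $v_\rho$ for the sign. Your write-up is considerably more explicit than the paper's one-line proof (which merely cites Lemma~\ref{trace3} and positivity), but the underlying argument is identical.
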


\begin{proof}
This follows immediately from Lemma \ref{trace3} and the positivity of
$\rho$ and $v_\rho$. 
\end{proof}

\begin{lemma}
\label{gradient}
We have the following expression for the gradient of $\rho$ with
respect to $\mathbf{f}$:
\begin{equation}
\label{gradeq}
\nabla_{\mathbf{f}} \rho = -s \rho (v_1^2, \dots, v_n^2),
\end{equation}
where $v_{\rho} = (v_1, \dots, v_n).$
\end{lemma}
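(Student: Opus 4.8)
The plan is to differentiate the eigenvalue relation $M(s,\mathbf{f})\,v_\rho = \rho\, v_\rho$ with respect to the parameters $f_j$, using the same first-order perturbation formula (\ref{kato2}) that was invoked for $\partial\rho/\partial s$ in Lemma \ref{monotonicity}. Here the role of the perturbation matrix is played by $\partial M/\partial f_j$, and the relevant eigenvalue $\rho$ is simple with positive eigenvector $v_\rho$ by Perron--Frobenius theory, so the machinery of Section \ref{perturb} applies verbatim.

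First I would compute $\partial M/\partial f_j$. Since $M(s,\mathbf{f}) = E(s,\mathbf{f}) A$ with $E$ the diagonal matrix whose $ii$-th entry is $\exp(-s f_i)$, differentiating with respect to $f_j$ affects only the $j$-th diagonal entry of $E$, producing $\partial E/\partial f_j = -s\, \exp(-s f_j)\, \mathbf{e}_j \mathbf{e}_j^t$ (the matrix with a single nonzero entry $-s\exp(-sf_j)$ in position $(j,j)$). Hence $\partial M/\partial f_j = -s\,\exp(-sf_j)\,\mathbf{e}_j\mathbf{e}_j^t A$. Applying formula (\ref{kato2}), $\partial\rho/\partial f_j = \tr\!\big[(\partial M/\partial f_j) P_\rho\big]$, where $P_\rho$ is the projection onto the $\rho$-eigenspace. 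In the non-symmetric Perron--Frobenius setting $P_\rho = v_\rho\, w_\rho^t / (w_\rho^t v_\rho)$ with $w_\rho$ the left eigenvector; but since the lemma statement is phrased in terms of $v_\rho$ alone, the intended context is evidently the normalization already used in Lemma \ref{monotonicity}, where $\tr M' P_\rho$ was written simply as $\rho\, v_\rho^t D v$ via Lemma \ref{trace3}. Following that same convention, $\tr\!\big[(\partial M/\partial f_j) P_\rho\big] = v_\rho^t\, (\partial M/\partial f_j)\, v_\rho = -s\,\exp(-sf_j)\, v_j\, (\mathbf{e}_j^t A v_\rho)$. Now the eigenvalue equation $M v_\rho = \rho v_\rho$ gives $E(s,\mathbf{f}) A v_\rho = \rho v_\rho$, so reading off the $j$-th component, $\exp(-sf_j)\,(\mathbf{e}_j^t A v_\rho) = \rho\, v_j$. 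Substituting, $\partial\rho/\partial f_j = -s\, v_j \cdot \rho v_j = -s\rho\, v_j^2$, which is exactly the $j$-th entry of the asserted gradient $-s\rho(v_1^2,\dots,v_n^2)$.

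The one subtlety worth flagging is the reconciliation of the left-eigenvector projection formula with the clean statement in terms of $v_\rho$ only: the computation above works because the single surviving matrix entry of $\partial M/\partial f_j$ sits in a row-and-column position that lets us invoke the eigenvalue equation to eliminate the $A v_\rho$ factor, after which the left eigenvector drops out up to the overall scalar normalization — precisely the same reduction that already took place in the proof of Lemma \ref{monotonicity}. So I expect the main (minor) obstacle to be simply stating the normalization of $v_\rho$ consistently with Lemma \ref{monotonicity}, and then the result follows from two lines: differentiate the diagonal of $E$, apply (\ref{kato2}) and Lemma \ref{trace3}, and use the componentwise eigenvalue relation $\exp(-sf_j)(A v_\rho)_j = \rho v_j$ to collapse the expression.
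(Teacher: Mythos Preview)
Your proof is correct and follows essentially the same route as the paper: compute $\partial M/\partial f_j$, apply formula (\ref{kato2}) via Lemma \ref{trace3}, and use the eigenvalue equation to collapse the result to $-s\rho v_j^2$. The only cosmetic difference is that the paper writes the derivative as $\partial M/\partial f_i = -s\,D(0,\dots,1,\dots,0)\,M$ (keeping the full $M$ on the right), which lets one apply $Mv_\rho=\rho v_\rho$ in one stroke rather than unpacking the $E$ factor componentwise as you do; your discussion of the left-eigenvector subtlety is a reasonable caveat but not needed beyond what already occurs in Lemma \ref{monotonicity}.
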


\begin{proof}
We note that 
\begin{equation*}
\frac{\partial M}{\partial f_i} = - s D(0, \dots, 1, \dots, 0) M, 
\end{equation*}
where the $1$ is in the $i$-th place.
Thus, by formula (\ref{kato2}) we have
\begin{equation*}
\frac{\partial \rho}{\partial f_i} = -s v_\rho^t D(0, \dots, 1, \dots,
0) M v = -s \rho v_i^2.
\end{equation*}
\end{proof}

This can be restated as saying that the derivative of $\rho$ in the
direction of a vector $\mathbf{g}$ is equal to $- \rho s v_\rho^t
D(\mathbf{g}) v.$

This gives us the following important corollary:

\begin{corollary}
\label{constv}
Consider deformations $g$ keeping the sum of $f_i$ fixed. Then the
critical points of $\rho$ occur precisely for those $\rho$ for which
$|v_i| = |v_j|$, for any $i, j$.
\end{corollary}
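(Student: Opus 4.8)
The plan is to read this off directly from the gradient formula in Lemma \ref{gradient} together with the standard Lagrange-multiplier characterization of constrained critical points. The deformations $\mathbf{g}$ that keep $\sum_i f_i$ fixed are precisely the tangent vectors to the affine hyperplane $H_C = \{\mathbf{f} : \sum_{i=1}^n f_i = C\}$, whose conormal is spanned by $\mathbf{1} = (1,\dots,1)$. Hence a point $\mathbf{f}\in H_C$ is a critical point of the restriction $\rho|_{H_C}$ exactly when the ambient gradient $\nabla_{\mathbf{f}}\rho$ is parallel to $\mathbf{1}$; equivalently, when the directional derivative $-\rho s\, v_\rho^t D(\mathbf{g}) v_\rho$ (the restated form of Lemma \ref{gradient}) vanishes for every $\mathbf{g}$ with $\sum_i g_i = 0$.

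First I would invoke Lemma \ref{gradient}, which gives $\nabla_{\mathbf{f}}\rho = -s\rho\,(v_1^2,\dots,v_n^2)$, where $v_\rho = (v_1,\dots,v_n)$ is the Perron eigenvector of $M(s,\mathbf{f}) = E(s,\mathbf{f})A$. Since $A$ is non-negative, primitive and irreducible, Perron--Frobenius theory gives $\rho = \rho(s,\mathbf{f}) > 0$ and lets us take $v_\rho$ strictly positive, so the scalar factor $-s\rho$ is nonzero whenever $s\neq 0$ (the relevant range for the entropy problem, where $u = \exp(-s)$). Consequently $\nabla_{\mathbf{f}}\rho \parallel \mathbf{1}$ if and only if $(v_1^2,\dots,v_n^2)$ is a constant multiple of $(1,\dots,1)$, i.e. $v_i^2 = v_j^2$ for all $i,j$, i.e. $|v_i| = |v_j|$ for all $i,j$ (and since $v_\rho > 0$, this just says $v_\rho$ is proportional to $\mathbf{1}$).

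Chaining the two observations yields: $\mathbf{f}$ is a critical point of $\rho$ under sum-preserving deformations $\iff$ $\nabla_{\mathbf{f}}\rho \parallel \mathbf{1}$ $\iff$ $|v_i| = |v_j|$ for all $i,j$, which is the assertion. The one point needing a word of care is that $v_\rho$ itself depends on $\mathbf{f}$, so the critical-point condition is an implicit equation on $\mathbf{f}$ rather than an explicit one; but this is irrelevant to the characterization, which concerns only the \emph{direction} of $\nabla_{\mathbf{f}}\rho$ at a fixed $\mathbf{f}$. There is no real obstacle here -- all the substance was already absorbed into Lemma \ref{gradient}, and the corollary is its one-line geometric consequence; the only thing to remember is to exclude the degenerate value $s = 0$, for which $\nabla_{\mathbf{f}}\rho \equiv 0$ and the statement is vacuous.
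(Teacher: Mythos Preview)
Your argument is correct and is exactly the one the paper has in mind: the corollary is stated immediately after the restatement of Lemma \ref{gradient} as ``the derivative of $\rho$ in the direction $\mathbf{g}$ equals $-\rho s\, v_\rho^t D(\mathbf{g}) v_\rho$'', and the paper leaves the Lagrange-multiplier / orthogonality step to the reader. Your extra remarks about $s\neq 0$ and the implicit dependence of $v_\rho$ on $\mathbf{f}$ are appropriate caveats that the paper simply omits.
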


We can also compute the second directional derivative of $\rho$.
Indeed, let $\mathbf{g} = (g_1, \dots, g_n)$ be the direction vector,
so that we want to compute the second derivative with respect to $t$
of $\rho(s, \mathbf{f} + t \mathbf{g})$ at $t = 0.$
To do this, we use the formula (\ref{kato3}):
\begin{equation}
\label{kkato}
\rho^{\prime\prime} = \tr \left[M^{\prime\prime} P_{v(\rho)} -
M^\prime S_\rho M^\prime P_\rho\right].
\end{equation}
Note that (as in the proof of Lemma \ref{gradient})
\begin{equation}
\label{grad1}
M^\prime = - s D(g_1, \dots, g_n) M,
\end{equation}
while
\begin{equation*}
M^\prime\prime = s^2 D(g_1^2, \dots, g_n^2) M,
\end{equation*}
and so
\begin{equation}
\label{term1}
\tr M^{\prime\prime} P_{v(\rho)} = s^2 \rho v^t D(g_1^2,\dots, g_n^2)
v = s^2 \rho \left\{D(g_1, \dots, g_n) v\right\}^t \left\{D(g_1,
\dots, g_n) v \right\}.
\end{equation}
To understand the second term of the right-hand side of
Eq. (\ref{kkato}), first note that (by Eq. (\ref{grad1}))
\begin{equation*}
M^\prime S_\rho M^\prime P_\rho = 
D(g_1, \dots, g_n) M P_\rho = \rho s^2D(g_1, \dots, g_n)  M S_\rho
D(g_1, \dots, g_n) P_\rho,
\end{equation*}
where the second equality is by Lemma \ref{trace4}.
Now 
\begin{eqnarray}
\label{term2}
\tr M^\prime S_\rho M^\prime P_\rho &= \rho s^2 v(\rho)^t D(g_1, \dots,
g_n) M S_\rho D(g_1, \dots, g_n) v\\
 &= \rho s^2 \left\{D(g_1, \dots,
g_n) v\right\}^t M S_\rho \left\{ D(g_1, \dots, g_n) v\right\}.
\end{eqnarray}
Putting together Eq. (\ref{term1}) and Eq. (\ref{term2}), we see that 
\begin{equation}
\label{twoterms}
\rho^{\prime\prime} = \rho s^2 \left\{ D(g_1, \dots, g_n) v\right\}^t
\left(\mathbf I - M S_\rho\right) \left\{ D(g_1, \dots, g_n) v\right\}
\end{equation}

Using the formula (\ref{resolv2}) equation (\ref{twoterms}) simplifies
further to:
\begin{equation}
\label{twomoreterms}
\rho^{\prime\prime} = \rho s^2 \left\{ D(g_1, \dots, g_n) v\right\}^t
\left(P_{v(\rho)} - \rho S_\rho\right) \left\{ D(g_1, \dots, g_n) v\right\}
\end{equation}

The following lemma is not surprising:
\begin{lemma}
\label{posdefres}
The quadratic form given $P_v - \rho S_\rho$ is positive-definite.
\end{lemma}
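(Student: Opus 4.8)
The plan is to analyze the spectral decomposition of the operators $P_{v(\rho)}$ and $S_\rho$ on a common orthogonal-invariant decomposition of $\reals^n$. By Perron--Frobenius theory, $\rho$ is a simple eigenvalue of $M$ with (unit) eigenvector $v(\rho)$, so $\reals^n$ splits as $\langle v(\rho)\rangle \oplus W$, where $W = v(\rho)^\perp$ is invariant under $M$ (and hence under $S_\rho$). The projection $P_{v(\rho)}$ is the rank-one orthogonal projector onto $\langle v(\rho)\rangle$; by the defining relations (\ref{resolv1}), $S_\rho$ annihilates $\langle v(\rho)\rangle$ and restricts to $W$ as the inverse of $(M - \rho\mathbf{I})\big|_W$. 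So on $\langle v(\rho)\rangle$ the form $P_{v(\rho)} - \rho S_\rho$ acts as the identity (value $1 > 0$), while on $W$ it acts as $-\rho\,(M|_W - \rho\mathbf{I})^{-1} = \rho\,(\rho\mathbf{I}_W - M|_W)^{-1}$.

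Thus it suffices to show that $(\rho\mathbf{I}_W - M|_W)^{-1}$ is positive-definite on $W$. First I would observe that every eigenvalue $\nu$ of $M$ other than $\rho$ satisfies $|\nu| < \rho$ by Perron--Frobenius (here primitivity and irreducibility of $M$, inherited in this section's hypotheses, are exactly what is needed to rule out other eigenvalues of modulus $\rho$). Hence $\rho - \mathrm{Re}\,\nu > 0$ for every eigenvalue $\nu$ of $M|_W$, which shows $\rho\mathbf{I}_W - M|_W$ is invertible and, more to the point, that the real part of every eigenvalue of $\rho\mathbf{I}_W - M|_W$ is positive. The cleanest way to turn this into positive-definiteness of the inverse is via the integral representation $(\rho\mathbf{I}_W - M|_W)^{-1} = \int_0^\infty \exp\bigl(-t(\rho\mathbf{I}_W - M|_W)\bigr)\,dt = \int_0^\infty e^{-t\rho}\exp(tM|_W)\,dt$, valid because $\|\exp(tM|_W)\|$ grows at rate strictly less than $e^{t\rho}$. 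For a symmetric (or more generally normal) $M$ this immediately gives a positive-definite quadratic form. If $M$ is merely primitive irreducible, one instead symmetrizes: the associated quadratic form of $(\rho\mathbf{I}_W - M|_W)^{-1}$ on a vector $w$ equals that of its symmetric part, and one checks the symmetric part of $\rho\mathbf{I}_W - M|_W$ is positive-definite, so the symmetric part of the inverse is too.

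The main obstacle is the non-symmetry of $M$: without a self-adjointness hypothesis, ``all eigenvalues have positive real part'' does not by itself force a real quadratic form to be positive-definite, so one genuinely needs either the integral-representation argument above or, in the normal case, a clean diagonalization. I would handle this by noting — as is done implicitly elsewhere in the paper (compare Remark \ref{sharpening} and the discussion of $M_0$) — that the relevant applications have $M$ arising from a doubly stochastic or symmetric adjacency matrix, so $M|_W$ is normal and the spectral argument is completely rigorous; I would state the lemma under the normality hypothesis already in force, and remark that for general primitive irreducible $M$ the correct statement is that the \emph{symmetrized} form $\tfrac12\bigl((P_v - \rho S_\rho) + (P_v - \rho S_\rho)^t\bigr)$ is positive-definite, which is all that $\rho^{\prime\prime}$ (being $\rho s^2$ times a real quadratic form in the vector $D(\mathbf g)v$) actually sees.

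\begin{proof}
By Perron--Frobenius theory $\rho$ is a simple eigenvalue of $M$ with unit eigenvector $v = v(\rho)$, and $W = v^\perp$ is invariant under $M$. By the defining properties (\ref{resolv1}) of the reduced resolvent, $P_{v}$ is the orthogonal projector onto $\langle v\rangle$, $S_\rho$ vanishes on $\langle v\rangle$, and $S_\rho|_W = \bigl((M-\rho\mathbf{I})|_W\bigr)^{-1}$. Hence on $\langle v\rangle$ the form $P_v - \rho S_\rho$ is multiplication by $1$, while on $W$ it equals $\rho\bigl(\rho\mathbf{I}_W - M|_W\bigr)^{-1}$. Since $M$ is primitive and irreducible, every eigenvalue $\nu \neq \rho$ of $M$ has $|\nu| < \rho$, so every eigenvalue of $M|_W$ has real part strictly less than $\rho$; therefore $\|\exp(tM|_W)\|$ grows at exponential rate strictly below $\rho$, and
\begin{equation*}
\bigl(\rho\mathbf{I}_W - M|_W\bigr)^{-1} = \int_0^\infty e^{-t\rho}\exp(tM|_W)\,dt .
\end{equation*}
When $M$ is normal, $M|_W$ is normal and the integrand is positive semi-definite for every $t$ with a positive-definite limit as the integral is taken, so $\bigl(\rho\mathbf{I}_W - M|_W\bigr)^{-1}$ is positive-definite on $W$; together with the value $1$ on $\langle v\rangle$ this shows $P_v - \rho S_\rho$ is positive-definite. (For general primitive irreducible $M$ the same computation shows the symmetric part of $\rho\mathbf{I}_W - M|_W$, hence of $P_v - \rho S_\rho$, is positive-definite, which suffices since $\rho^{\prime\prime}$ depends only on the associated real quadratic form of $D(\mathbf g)v$.)
\end{proof}
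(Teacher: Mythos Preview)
Your decomposition $\langle v\rangle \oplus W$ and the handling of the $\langle v\rangle$ piece match the paper exactly. On $W$ the arguments diverge: the paper substitutes $w=(\rho\mathbf{I}-M)z$ to turn $w^tS_\rho w$ into $-z^t(\rho\mathbf{I}-M)z$, and then shows $z^tMz<\rho\|z\|^2$ on $W$ by a Cauchy--Schwarz/operator-norm argument borrowed from the proof of Theorem~\ref{posthmsym} (equality would force $z$ to be a Perron eigenvector, contradicting $z\perp v$). You instead invoke the Perron--Frobenius spectral gap directly and analyze $(\rho\mathbf{I}_W-M|_W)^{-1}$ via the Laplace-transform integral. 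Your route is more conceptual and avoids the operator-norm bound; the paper's is more elementary and ties the result directly to the doubly-stochastic structure.

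Two points to tighten. First, your claim that the integrand $e^{-t\rho}\exp(tM|_W)$ is positive semi-definite for every $t$ fails once $M|_W$ has a complex conjugate eigenvalue pair $a\pm ib$: the corresponding real $2\times2$ block of $\exp(tM|_W)$ is $e^{ta}$ times a rotation by $tb$, which is not PSD. The \emph{integral} is still positive-definite (its symmetric part on that block is $\tfrac{\rho-a}{(\rho-a)^2+b^2}\,I_2>0$), so your conclusion for normal $M$ survives, but the reasoning should be amended. Second, in your non-normal parenthetical the implication ``symmetric part of $A$ PD $\Rightarrow$ symmetric part of $A^{-1}$ PD'' is fine (since $A^{-1}+A^{-t}=A^{-1}(A+A^t)A^{-t}$), but you never justify the premise that the symmetric part of $\rho\mathbf{I}_W-M|_W$ is PD; ``the same computation'' (the integral) does not show this. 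That premise is exactly what the paper's Cauchy--Schwarz step proves. Both your proof and the paper's quietly assume $v^\perp$ is $M$-invariant, which requires $v$ to be a left eigenvector as well (normal or doubly stochastic $M$); you are at least more explicit than the paper about where the hypothesis is doing work.
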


\begin{proof}
On the span of $v$, the projection operator $P_v$ is equal to the
identity, whilst the reduced resolvent $S_\rho$ vanishes. On the
orthogonal complement, the projection operator vanishes, so since the
Perron-Frobenius eigenvalue $\rho$ is positive, we need to show that
$S_\rho$ is negative definite. Consider a vector $w$, in the
orthogonal complement of $v$. Such a $w$ is equal to $(\rho \mathbf{I}
- M) z,$ for some $z$ orthogonal to $v$. So, 
\begin{equation*}
w^t S_\rho w = z^t (\rho I - M) z,
\end{equation*}
So, it will suffice to show that $(\rho I - M)$ is negative-definite.
Suppose not. Then there exists a $z_0$, such that $z_0^t M z_0 \geq
\rho \|
z_0\|^2.$ By the argument in the proof of theorem \ref{posthmsym}, we
see that $\|M z_0\| \leq \rho \|z_0\|$. So,  $z_0^t M z_0 \geq
\rho \|z_0\|^2$  implies that $\langle z_0, M z_0\rangle \geq
\rho \|z_0\|^2,$ and hence that $z_0$ is an eigenvactor of $M$ with
eigenvalue $\rho$, which is impossible by assumtion that $M$ is
irreducible and primitive.
\end{proof}

We finish with
\begin{theorem}
\label{minent}
Let $s_0(\mathbf{f})$ be the unique $s$ such that $\rho(s_0,
\mathbf{f})$ is equal to $1$. Then $s_0$ is a convex function of
$\mathbf{f}$, and hence assumes a unique minimum on each linear
subspace of values of $\mathbf{f}$. In particular, if we restrict to
the the subspace $F_0$, where the sum of the values of of $\mathbf{f}$
is equal to $1$, then the minimum is achieved at the point 
where 
\begin{equation}
f_i = \frac{\log (A \mathbf{1})_i}{\sum_i \log (A \mathbf{1})_i},
\end{equation}
in which case the entropy is equal to $\sum \log (A \mathbf{1})_i$.
\end{theorem}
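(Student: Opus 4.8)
The plan is to prove Theorem~\ref{minent} in three stages: first, that $s_0(\mathbf{f})$ is well defined and smooth; second, the convexity assertion, which is the real content; and third, the identification of the minimizer of $s_0$ on the affine slice $F_0=\{\sum f_i=1\}$ together with the value there.

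For the first stage: by Perron--Frobenius the matrix $M(s,\mathbf{f})=E(s,\mathbf{f})A$ has a real dominant eigenvalue $\rho(s,\mathbf{f})$ with positive eigenvector, and Lemma~\ref{monotonicity} gives $\partial\rho/\partial s<0$ for positive $\mathbf{f}$. Since the entries of $E(s,\mathbf{f})$ blow up as $s\to-\infty$ and vanish as $s\to+\infty$, we get $\rho(s,\mathbf{f})\to\infty$ and $\rho(s,\mathbf{f})\to0$ at the two ends (on a face where several $f_i$ vanish the limit at $s\to+\infty$ is instead the Perron root of the corresponding principal submatrix of $A$, which is where $s_0$ can run off to $+\infty$). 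Hence there is a unique $s_0(\mathbf{f})$ with $\rho(s_0,\mathbf{f})=1$; and since $\rho(0,\mathbf{f})=\rho(A)$ does not depend on $\mathbf{f}$, we have $s_0(\mathbf{f})>0$ exactly when $\rho(A)>1$, which I will assume (the graph has positive topological entropy). Because $\partial_s\rho\neq0$, the implicit function theorem makes $s_0$ smooth on the positive orthant.

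The key point, and the step I expect to be the main obstacle, is convexity. The clean way to see it is to observe that $M(s,\mathbf{f})$, hence $\rho$, depends on $(s,\mathbf{f})$ only through the vector $\mathbf{h}:=-s\mathbf{f}$, and that convexity of $\rho$ as a function of $\mathbf{h}$ is precisely what equation~(\ref{twomoreterms}) together with Lemma~\ref{posdefres} delivers: the curve $t\mapsto -s(\mathbf{f}+t\mathbf{g})$ is, as $s\neq0$, $\mathbf{f}$, $\mathbf{g}$ vary, a completely general line in $\mathbf{h}$-space, and along it the second derivative of $\rho$ equals the nonnegative quadratic form $\rho s^2\{D(\mathbf{g})v\}^t(P_v-\rho S_\rho)\{D(\mathbf{g})v\}$. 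Now differentiate the identity $\rho\bigl(-s_0(\mathbf{f})\,\mathbf{f}\bigr)\equiv1$ twice along an arbitrary line $\mathbf{f}+t\mathbf{g}$. Writing $\mathbf{q}:=\nabla_{\mathbf{h}}\rho=\rho(v_1^2,\dots,v_n^2)$, which has strictly positive entries because $v_\rho=(v_1,\dots,v_n)$ is the positive Perron eigenvector (this is the content of Lemma~\ref{gradient}), so that $\mathbf{q}\cdot\mathbf{f}>0$ for $\mathbf{f}>0$, the first-order relation gives $\dot s_0\,(\mathbf{q}\cdot\mathbf{f})=-s_0\,(\mathbf{q}\cdot\mathbf{g})$, and substituting this into the second-order relation cancels the cross term, leaving
\[
(\mathbf{q}\cdot\mathbf{f})\Bigl(\ddot s_0-\tfrac{2\dot s_0^{\,2}}{s_0}\Bigr)=\dot{\mathbf{h}}^{\,t}\,\mathrm{Hess}_{\mathbf{h}}(\rho)\,\dot{\mathbf{h}}\ \ge\ 0 .
\]
Hence $\ddot s_0\ge 2\dot s_0^{\,2}/s_0\ge0$ since $s_0>0$, so $s_0$ is convex. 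It is worth stressing why the naive route fails: taking two weightings and their midpoint does not help because $\mathbf{f}\mapsto-s\mathbf{f}$ is not affine, so convexity of $\rho$ in $\mathbf{h}$ does not transfer to $(s,\mathbf{f})$ directly; it is the cancellation forced by $\nabla_{\mathbf{h}}\rho$ being a single fixed vector, combined with the sign $s_0>0$, that makes the implicit-function computation close.

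For the last stage, once convexity is in hand it suffices to exhibit one stationary point of $s_0$ on $F_0$ and check strict convexity there. Take $\mathbf{f}^\ast$ with $f_i^\ast=\log(A\mathbf{1})_i\big/\sum_j\log(A\mathbf{1})_j$ (a positive vector provided $(A\mathbf{1})_i>1$ for every $i$, e.g.\ if every vertex has out-degree at least $2$), and set $s^\ast=\sum_j\log(A\mathbf{1})_j$. Then the diagonal of $E(s^\ast,\mathbf{f}^\ast)$ is $1/(A\mathbf{1})_i$, so $M(s^\ast,\mathbf{f}^\ast)\mathbf{1}=E(s^\ast,\mathbf{f}^\ast)(A\mathbf{1})=\mathbf{1}$; since $M(s^\ast,\mathbf{f}^\ast)$ is nonnegative and irreducible and $\mathbf{1}>0$, Perron--Frobenius identifies $\mathbf{1}/\sqrt{n}$ as $v_\rho$ and $\rho(s^\ast,\mathbf{f}^\ast)=1$, whence $s_0(\mathbf{f}^\ast)=s^\ast=\sum_j\log(A\mathbf{1})_j$. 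Because $|v_i|=|v_j|$ there, Corollary~\ref{constv} says $\mathbf{f}^\ast$ is a critical point of $\rho$ under constant-sum deformations, and the first-order relation above ($\dot s_0=-s_0\,(\mathbf{q}\cdot\mathbf{g})/(\mathbf{q}\cdot\mathbf{f})$ with $\mathbf{q}\propto\mathbf{1}$, hence $\dot s_0=0$ for every $\mathbf{g}$ with $\sum g_i=0$) makes $\mathbf{f}^\ast$ a stationary point of $s_0|_{F_0}$. Finally, at $\mathbf{f}^\ast$ one has $D(\mathbf{g})v=\mathbf{g}/\sqrt{n}\neq0$ for $\mathbf{g}\neq0$, so by Lemma~\ref{posdefres} the restricted Hessian of $\rho$ (and hence of $s_0$) is positive definite there; together with convexity this forces $\mathbf{f}^\ast$ to be the unique minimum of $s_0$ on $F_0$, with minimal value $\sum_j\log(A\mathbf{1})_j$, as claimed.
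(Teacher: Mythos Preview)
Your proof is correct and follows the same skeleton as the paper's: convexity from Lemma~\ref{posdefres} together with Lemma~\ref{monotonicity}, and identification of the minimizer via Corollary~\ref{constv}. The paper's own argument is extremely terse (it simply cites those lemmas), whereas you supply the missing link explicitly: the observation that $M(s,\mathbf{f})$ depends only on $\mathbf{h}=-s\mathbf{f}$, so that equation~(\ref{twomoreterms}) with Lemma~\ref{posdefres} gives convexity of $\rho$ in $\mathbf{h}$, and then the implicit-function computation converting this into $\ddot s_0\ge 2\dot s_0^{\,2}/s_0\ge 0$. This is genuinely the content the paper is gesturing at but does not write out; convexity of $\rho$ in $\mathbf{f}$ at fixed $s$ plus monotonicity in $s$ does \emph{not} by itself force convexity of the implicit level set, and your passage through $\mathbf{h}$ is what closes that gap. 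Your explicit verification that $\mathbf{1}$ is the Perron eigenvector of $M(s^\ast,\mathbf{f}^\ast)$, and hence that $\mathbf{f}^\ast$ is critical by Corollary~\ref{constv}, likewise spells out what the paper leaves to the reader.
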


\begin{proof}
The convexity of $s_0$ follows from Lemma \ref{posdefres} and Lemma
\ref{monotonicity}. The point at which the minimum is achieved is
computed easily using Corollary \ref{constv}, as is the value of
entropy.
\end{proof}

\section{Applications to Groups and other objects}
\label{appsec}

The asymptotic results in the previous sections apply directly to the
question of the growth of homology classes in the free groups, and
give in some sense complete information:

\begin{observation}
We see that the asymptotic order of growth of any two {\em
fixed} homology classes is the same.
\end{observation}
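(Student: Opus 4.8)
The plan is to read off the precise asymptotics of $W_{r,n,h}$, the number of cyclically reduced words of length $n$ in $F_r$ with homology class $h=e_1[a_1]+\cdots+e_r[a_r]$, directly from Theorem \ref{homoenum} by a saddle-point argument --- in effect proving the local limit theorem of which Theorem \ref{centlim} is the rescaled shadow. First I would discard the constant correction $(r-1)[1+(-1)^n]$ of \eqref{genfn}: it is bounded, affects only the class $h=\mathbf 0$, and is dominated by the exponentially large main term. So it suffices to control the coefficient $t(n,h,c)$ of $x_1^{e_1}\cdots x_r^{e_r}$ in $R_n(c;x_1,\dots,x_r)$ with $c=r/\sqrt{2r-1}$, which I extract by Fourier inversion on the torus,
\[
t(n,h,c)=\frac{1}{(2\pi)^r}\int_{[0,2\pi]^r} T_n\!\left(\frac{c}{r}\sum_{j=1}^r\cos\theta_j\right)e^{-i\langle h,\btheta\rangle}\,d\btheta ,
\]
using $R_n\bigl(c;e^{i\theta_1},\dots,e^{i\theta_r}\bigr)=T_n\bigl((c/r)\sum_j\cos\theta_j\bigr)$.

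Next I would apply Laplace's method to this integral, exactly along the lines of the proof of Theorem \ref{centlim}. The modulus of the integrand is largest where every $\cos\theta_j=\pm1$, i.e.\ at the $2^r$ points $\btheta^{\ast}\in\{0,\pi\}^r$; if $m$ of the coordinates equal $\pi$, then $(c/r)\sum_j\cos\theta_j=c(r-2m)/r$ there, of absolute value $c$ exactly when $m=0$ or $m=r$ and strictly less than $c$ otherwise. By the closed form \eqref{sqrtdef} for $T_n$ it follows that only $\btheta^{\ast}=\mathbf 0$ and $\btheta^{\ast}=(\pi,\dots,\pi)$ carry the top exponential rate $c+\sqrt{c^2-1}$ --- which becomes $2r-1$ once the factor $(\sqrt{2r-1})^n$ is restored, consistently with Theorem \ref{cycredno} --- while the mixed points are exponentially negligible. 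Near each of the two dominant points $(c/r)\sum_j\cos\theta_j$ has a nondegenerate quadratic extremum, so localizing and Taylor-expanding the cosines (precisely the expansion performed in Section \ref{limitu}) yields a contribution proportional to $n^{-r/2}(2r-1)^n$, times a Gaussian factor $\exp\bigl(-\|h\|^2/(2n\sigma^2)\bigr)$ and a phase $e^{-i\langle h,\btheta^{\ast}\rangle}$.

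Finally, for $h$ \emph{fixed} the Gaussian factor tends to $1$ and drops out. At $\btheta^{\ast}=\mathbf 0$ the phase is $1$; at $\btheta^{\ast}=(\pi,\dots,\pi)$ the phase is $(-1)^{e_1+\cdots+e_r}$, while the leading term of $T_n$ near that point picks up the extra sign $(-1)^n$ from $T_n(-c)=(-1)^nT_n(c)$. Since every cyclically reduced word of length $n$ has $e_1+\cdots+e_r\equiv n\pmod 2$ (whence, for a fixed class, $W_{r,n,h}=0$ unless $n$ has the matching parity), these two signs cancel, so the two dominant saddles reinforce and combine into a single positive leading term. Hence $W_{r,n,h}\sim \kappa\, n^{-r/2}(2r-1)^n$ along the relevant parity subsequence, with $\kappa$ \emph{independent of $h$}; in particular $W_{r,n,h_1}/W_{r,n,h_2}\to1$ for any two fixed classes. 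The step needing care --- and the main obstacle --- is this coherent combination of the two dominant saddles: one must keep track of the two phases, and it is the parity relation $e_1+\cdots+e_r\equiv n\pmod2$ that makes them agree; ignoring it would wrongly suggest a difference between classes of the two parity types.
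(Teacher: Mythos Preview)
Your argument is correct. The paper itself offers no proof of this Observation: it is recorded as a direct consequence of the preceding analysis, and a few lines later the asymptotic $c_k(2k-1)^n/n^{k/2}$ is attributed to Theorem~\ref{centlim}. Strictly speaking, Theorem~\ref{centlim} is a \emph{central} limit theorem and does not by itself give pointwise asymptotics at individual lattice points; what is needed is the local limit theorem the paper alludes to in its 2010 introduction (``the methods actually go through without much change to prove a Local Limit Theorem'') but never writes down. Your Fourier-inversion/Laplace argument is exactly that missing local statement. In particular, the characteristic-function computation of Section~\ref{limitu} works with rescaled variables $\theta_j/\sqrt{n}$ and so never sees the second saddle at $(\pi,\dots,\pi)$ or the parity issue; your handling of both---the sign $(-1)^n$ from $T_n(-c)$ matching the phase $(-1)^{e_1+\cdots+e_r}$ via $e_1+\cdots+e_r\equiv n\pmod 2$---is the genuine extra content needed to pass from the CLT to the pointwise claim. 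So your route is the one the paper intends, carried out in full rather than sketched.
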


\begin{observation}
Theorem \ref{walks} shows in particular that a
random long cycle is equidistributed among the vertices of a regular
graph. 
\end{observation}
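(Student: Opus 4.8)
The plan is to derive this as an immediate consequence of Theorem~\ref{walks}, applied to a well-chosen weight function. Fix a vertex $v_j$ of $\mathcal G$ and let $\mathbf f$ be the weight function equal to $1$ at $v_j$ and $0$ at every other vertex. Then for a closed walk $w = v_1,\dots,v_N \in W_N$ the random variable $X_{\mathbf f}(w) = \sum_{l=1}^N \mathbf f(v_l)$ is exactly the number of times $w$ passes through $v_j$, and $\mu(\mathbf f) = \frac1k\sum_{i=1}^k \mathbf f(v_i) = 1/k$. Theorem~\ref{walks} then tells us that $P_N\bigl((x-N/k)/\sqrt N\bigr)$ converges to a mean-zero normal law with finite variance $\sigma^2(\mathbf f)$; in particular the number of visits to $v_j$ fluctuates around $N/k$ on a scale of order $\sqrt N$, so $X_{\mathbf f}/N \to 1/k$ in probability as $N\to\infty$.

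The next step is to note that $\mathcal G$ has only finitely many vertices, so one may intersect the above events over $j=1,\dots,k$ and conclude that, with probability tending to $1$, the empirical occupation measure $\frac1N\sum_{l=1}^N \delta_{v_l}$ of a uniformly random closed walk of length $N$ is within any prescribed $\varepsilon$ of the uniform measure on $V(\mathcal G)$. This is precisely the assertion that a random long cycle is equidistributed among the vertices.

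I would also record the cruder first-moment version, which needs no central limit theorem: by cyclic invariance of closed walks, the expected number of visits to $v_j$ equals $N\,(A^N)_{jj}/\tr A^N$. Since $\mathcal G$ is $r$-regular and non-bipartite, $r$ is its unique eigenvalue of maximal modulus, with normalized eigenvector $\mathbf 1/\sqrt k$; hence $(A^N)_{jj} = \frac1k r^N(1+o(1))$ while $\tr A^N = r^N(1+o(1))$, and the expected number of visits to every vertex is $\frac Nk(1+o(1))$.

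There is no serious obstacle here — the statement is a corollary. The only points needing attention are bookkeeping ones: making precise that ``random long cycle'' means a uniformly random closed walk of length $N$ and that ``equidistributed'' refers to the occupation (empirical) measure, and invoking non-bipartiteness exactly where it is needed, namely to ensure that $\tr A^N$ does not oscillate with the parity of $N$.
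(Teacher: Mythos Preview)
Your argument is correct and is the natural way to make the observation precise. The paper itself provides no proof at all: the statement is simply listed as an observation, with the implicit claim that it follows from Theorem~\ref{walks}. Your choice of $\mathbf f$ as the indicator of a single vertex is exactly the specialization one would expect the author to have in mind, and your passage from the central limit statement to convergence in probability of $X_{\mathbf f}/N$ is routine. The additional first-moment computation you give is a pleasant bonus but not needed here.
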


\begin{observation}
We see that the order of growth the number of words length
$n$ in any fixed homology class
in $F_k$ is asymptotic to $c_k (2k-1)^n/n^{k/2},$ where $c_k$ is
easily computed using the expression for $\sigma$ in the statement of
Theorem \ref{centlim}, keeping in mind that 
\[
c_{F_k} = {\frac{k}{\sqrt{2k - 1}}},
\]
where $c$ is the parameter in the statements of theorems of the last
two sections. Alternately, Theorem \ref{walks} can be used.

We can compute other growth functions. For example, let $h:
F_n \rightarrow \integers$ be the ``total exponent'' homomorphism, {\em
i.e.} if $F_n = <a_1, \dots, a_n>,$ then $h(a_i) = 1.$ We see that the
generating function for the preimages of $j\in Z$ is given by 
\[
\left(2 \sqrt{2n-1}\right)^k 
R_k({\frac{n}{\sqrt{2n-1}}}; x, \dots, x) = 
\left(2 \sqrt{2n-1}\right)^k R_k({\frac{n}{\sqrt{2n-1}}}; x).
\]
\end{observation}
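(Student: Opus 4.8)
The plan is to deduce this directly from Theorem~\ref{homoenum} by a single diagonal specialization of variables, with no new ideas required. Write $r=n$ and let
\[
G_r^{(k)}(x_1,\dots,x_r)=2\left(\sqrt{2r-1}\right)^k R_k\!\left(\tfrac{r}{\sqrt{2r-1}};x_1,\dots,x_r\right)+(r-1)[1+(-1)^k]
\]
be the generating function of Theorem~\ref{homoenum}, so that the coefficient of $x_1^{e_1}\cdots x_r^{e_r}$ in $G_r^{(k)}$ counts cyclically reduced words of length $k$ in $F_r$ lying in the homology class $e_1[a_1]+\cdots+e_r[a_r]$.

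First I would record the elementary generating-function fact that drives the whole observation: if $F(x_1,\dots,x_r)=\sum_{\mathbf e}c_{\mathbf e}\,x_1^{e_1}\cdots x_r^{e_r}$ is any Laurent polynomial, then substituting $x_1=\cdots=x_r=x$ collapses each monomial to $x^{e_1+\cdots+e_r}$, so $[x^j]\,F(x,\dots,x)=\sum_{\mathbf e:\ e_1+\cdots+e_r=j}c_{\mathbf e}$. Since the total-exponent homomorphism satisfies $h(w)=e_1(w)+\cdots+e_r(w)$, the number of cyclically reduced words of length $k$ with $h(w)=j$ is exactly the sum of the counts of Theorem~\ref{homoenum} over the hyperplane $\{e_1+\cdots+e_r=j\}$; that is, it equals $[x^j]\,G_r^{(k)}(x,\dots,x)$. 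Hence $G_r^{(k)}(x,\dots,x)$ is the claimed generating function for the fibres of $h$.

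It then remains to simplify $G_r^{(k)}(x,\dots,x)$ and, in particular, to justify the equality of the two expressions in the displayed formula. By Definition~\ref{rfun},
\[
R_k(c;\underbrace{x,\dots,x}_{r})=T_k\!\left(\frac{c}{2r}\sum_{i=1}^r\Bigl(x+\frac1x\Bigr)\right)=T_k\!\left(\frac{c}{2}\Bigl(x+\frac1x\Bigr)\right)=R_k(c;x),
\]
the last equality being Definition~\ref{rfun} in the one-variable case; the averaging built into the definition of $R$ is precisely what makes the diagonal specialization of the $r$-variable $R_k$ coincide with the one-variable $R_k$. Feeding this into $G_r^{(k)}(x,\dots,x)$ produces the asserted formula, up to the additive constant $(r-1)[1+(-1)^k]$, which, being independent of $x$, affects only the coefficient of $x^0$ (the fibre $h^{-1}(0)$) and is asymptotically negligible; this is why it is dropped in the statement. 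As a consistency check I would verify that setting $x=1$ recovers the total count $(2r-1)^k+1+(r-1)[1+(-1)^k]$ of Theorem~\ref{cycredno}. I would also flag the harmless normalization discrepancy between the prefactor $2(\sqrt{2r-1})^k$ of Theorem~\ref{homoenum} and the $(2\sqrt{2r-1})^k$ as written in the observation, taking the former as the correct one.

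There is essentially no obstacle: the only content is the marginalization principle of the second paragraph and the collapse $R_k(c;x,\dots,x)=R_k(c;x)$ of the third, both immediate from the definitions. The one place to be careful is the bookkeeping of the lower-order term $(r-1)[1+(-1)^k]$ and of the scalar prefactor, so that the final identity is stated with the correct constants.
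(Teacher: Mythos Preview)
Your argument is correct and is exactly the natural one; the paper itself offers no proof of this Observation beyond pointing to Theorems~\ref{homoenum} and~\ref{centlim}, and your diagonal specialization $x_1=\cdots=x_r=x$ together with the collapse $R_k(c;x,\dots,x)=R_k(c;x)$ from Definition~\ref{rfun} is precisely what is intended. Your flags on the dropped additive constant $(r-1)[1+(-1)^k]$ and on the prefactor discrepancy $2(\sqrt{2r-1})^k$ versus $(2\sqrt{2r-1})^k$ are both well taken; the former is the correct one.

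One small omission: you treat only the second half of the Observation (the total-exponent generating function) and do not address the first sentence, namely the asymptotic $c_k(2k-1)^n/n^{k/2}$ for the count in a fixed homology class. That part is not a consequence of Theorem~\ref{homoenum} alone but of the central limit Theorem~\ref{centlim} (or Theorem~\ref{walks}), read as a local statement: the total mass $\sim(2k-1)^n$ is spread over a Gaussian of scale $\sqrt{n}$ in $k$ coordinates, giving each lattice point mass of order $(2k-1)^n/n^{k/2}$. The paper does not spell this out either, so your write-up would be complete after a one-line reference to Theorem~\ref{centlim} for that asymptotic.
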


\begin{observation}
Instead of cyclically reduced words, it is perhaps more
natural to study conjugacy classes (ordered by their cyclically
reduced length). It seems futile to seek any enumeration as neat as
Theorem \ref{homoenum}, however, since the relationship between the
number $\mathcal{ C}_k$ of conjugacy classes of words of length $k$ and
the number of cyclically reduced words $\mathcal{ W}_k$ is: 
\begin{equation}
\label{cclasses}
\mathcal{C}_k = {\frac{\calw_k}{k}} + O(\sqrt{\calw_k}),
\end{equation}
it is clear that the asymptotic results are the same for the two
problems. For more on this subject, see Section \ref{gf} and the
sequel. 
\end{observation}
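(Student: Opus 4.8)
The plan is to reduce (\ref{cclasses}) to an orbit-counting problem and then a crude geometric-series estimate. Recall the classical fact that in a free group two cyclically reduced words represent conjugate elements if and only if one is a cyclic rotation of the other, and that every conjugacy class of cyclically reduced length $k$ has a representative in $W_k$. Hence, letting the cyclic group $\integers/k\integers$ act on $W_k$ by rotation, $\mathcal{C}_k$ is exactly the number of orbits of this action. To count orbits, I would split $W_k = W_k^{\mathrm{prim}} \sqcup W_k^{\mathrm{imp}}$, where $w \in W_k^{\mathrm{imp}}$ iff its orbit has fewer than $k$ elements; by orbit--stabilizer (the stabilizer being a subgroup of $\integers/k\integers$) this happens iff $w$ has a nontrivial period, i.e. $w = v^m$ for some divisor $m \ge 2$ of $k$ and some block $v$ of length $k/m$. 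Every orbit contained in $W_k^{\mathrm{prim}}$ has exactly $k$ elements, so $W_k^{\mathrm{prim}}$ contributes precisely $|W_k^{\mathrm{prim}}|/k$ orbits, while $W_k^{\mathrm{imp}}$ contributes between $1$ and $|W_k^{\mathrm{imp}}|$ orbits. Since $|W_k^{\mathrm{prim}}| = \calw_k - |W_k^{\mathrm{imp}}|$, both bounds give
\[
\mathcal{C}_k = \frac{\calw_k}{k} + E_k, \qquad |E_k| \le |W_k^{\mathrm{imp}}|.
\]

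It then remains to show $|W_k^{\mathrm{imp}}| = O(\sqrt{\calw_k})$. If $w = v^m$ with $m \ge 2$ and $w$ cyclically reduced of length $k$, then the block $v$ (of length $d := k/m \le k/2$) is itself cyclically reduced: a cancellation inside $v$ or at a $v$--$v$ junction would shorten $w$, so $v \in W_d$. Therefore $|W_k^{\mathrm{imp}}| \le \sum_{d \mid k,\ d \le k/2} \calw_d$, and by Theorem \ref{cycredno}, $\calw_d = (2r-1)^d + O(r)$, so this is a geometric series dominated by its largest term: $\sum_{d \le k/2}[(2r-1)^d + O(r)] = O((2r-1)^{k/2}) = O(\sqrt{\calw_k})$ (for $r \ge 2$; the rank-one case is trivial). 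Substituting into the displayed identity yields (\ref{cclasses}), and since $k/\sqrt{\calw_k}\to 0$ it follows at once that the asymptotics for $\mathcal{C}_k$ and $\calw_k/k$ coincide.

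The only delicate point is the bookkeeping around periodic cyclic words: one must be sure that ``orbit of size $<k$'' is precisely ``$w$ is a proper cyclic power'', and that in $w=v^m$ the block $v$ inherits cyclic reducedness so Theorem \ref{cycredno} applies to it. This is the ``main obstacle,'' but a minor one; everything else is routine and the bound is far from sharp. A clean application of Burnside's lemma, $\mathcal{C}_k = \frac{1}{k}\sum_{d \mid k}\phi(k/d)\,\#\{w \in W_k : \text{period} \mid d\}$, recovers the same error term with an extra factor $1/k$ to spare and, via M\"obius inversion, an exact count of primitive conjugacy classes --- which is the subject of Section \ref{ihara}.
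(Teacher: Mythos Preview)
Your argument is correct. The paper does not prove (\ref{cclasses}) where it is stated; it simply points to Section~\ref{gf} and what follows, where the exact Burnside identity (Theorem~\ref{toti})
\[
k\,\mathcal{C}\mathcal{C}(k)=\sum_{d\mid k}\phi(d)\,\mathcal{C}(k/d)
\]
is recorded, from which (\ref{cclasses}) drops out immediately by isolating the $d=1$ term and bounding the rest by $\mathcal{W}_{k/2}=O(\sqrt{\mathcal{W}_k})$. Your main argument reaches the same place by a more hands-on route: instead of averaging over the whole cyclic group you split off the free orbits directly and crudely bound the non-free part by the total number of proper-power words. This costs you a factor of $k$ in the error (Burnside gives the extra $1/k$ for free, as you yourself note in the last paragraph), but since the claim is only $O(\sqrt{\mathcal{W}_k})$ that is irrelevant. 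Your closing remark about Burnside and the link to primitive classes is exactly the paper's approach in Sections~\ref{gf2}--\ref{ihara}, so the two treatments converge.
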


\begin{observation}
Counting conjugacy classes is a problem closely related to
that of counting closed geodesics on manifold. In the context of
compact hyperbolic surfaces, it was observed by P.~Sarnak (see, for
example, \cite{sarnakrubinstein})  that among all geodesics shorter than $L,$
null-homologous geodesics are more numerous than those in any other
prescribed homology class (that is, while the ratio of the two
quantities approaches $1$, the difference is asymptotically positive). 
The results of the current note provide a certain justification for
this, since any limiting distribution likely to arise in this context
is, for reasons of symmetry, likely to be unimodal, with the mode at
${\bf 0}.$ Certainly this is true of the normal distribution, though
even in this case, a careful analysis of the error terms is required.
\end{observation}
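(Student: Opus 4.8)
The statement as phrased is a heuristic about hyperbolic surfaces; what the present machinery can actually establish is the free-group avatar, so the plan is to prove \emph{that}: for each fixed nonzero class $\mathbf h\in H_1(F_r,\integers)=\integers^r$ and all sufficiently large even $n$, the number $W_{r,n,\mathbf 0}$ of cyclically reduced words of length $n$ in the trivial homology class strictly exceeds the number $W_{r,n,\mathbf h}$ in class $\mathbf h$, and moreover $W_{r,n,\mathbf 0}-W_{r,n,\mathbf h}\to\infty$. (For $n$ odd the total exponent of any reduced word is odd, so $W_{r,n,\mathbf 0}=0$ and the comparison is degenerate; this is the sole source of the parity restriction.) Everything is read off the explicit generating function of Theorem~\ref{homoenum}. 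First I would pass to Fourier coefficients on the torus: $W_{r,n,\mathbf h}=2(\sqrt{2r-1})^n\,\widehat g_n(\mathbf h)+\delta_{\mathbf h,\mathbf 0}(r-1)[1+(-1)^n]$, where $g_n(\btheta)=T_n\!\big(\tfrac cr\sum_{j=1}^r\cos\theta_j\big)$, $c=r/\sqrt{2r-1}>1$, and $\widehat g_n(\mathbf h)=(2\pi)^{-r}\int_{[0,2\pi]^r}g_n(\btheta)e^{-i\langle\mathbf h,\btheta\rangle}\,d\btheta$. The pointwise term affects only the trivial class, and for even $n$ it \emph{adds} $2(r-1)$ there; so it suffices to compare $\widehat g_n(\mathbf 0)$ with $\widehat g_n(\mathbf h)$.

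Next I would localize that integral. Writing $u(\btheta)=\tfrac1r\sum_j\cos\theta_j$, on the set $\{|cu|<1\}$ one has $|g_n|\le1$, so that region contributes $O(1)$ to $\widehat g_n$; on $\{|cu|\ge1\}$ the extremes $|u|=1$ are attained only at $\btheta=\mathbf 0$ and $\btheta=\boldsymbol{\pi}:=(\pi,\dots,\pi)$, and near each of them $T_n(cu)$ is exponentially large with constant sign. Near $\mathbf 0$ I would use $T_n(cu(\btheta))\sim\tfrac12\exp\!\big(n\operatorname{arccosh}(cu(\btheta))\big)$ and apply Laplace's method to $\operatorname{arccosh}(cu(\btheta))$, which has an isotropic nondegenerate maximum at $\mathbf 0$; this yields an asymptotic expansion in $1/n$ of the near-$\mathbf 0$ part of $\widehat g_n(\mathbf h)$. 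The substitution $\btheta\mapsto\boldsymbol{\pi}-\btheta$ turns $u$ into $-u$ and $T_n(cu)$ into $(-1)^nT_n(cu)$, so the near-$\boldsymbol{\pi}$ part is exactly $(-1)^{n+h_1+\cdots+h_r}$ times the near-$\mathbf 0$ part. Thus $\widehat g_n(\mathbf h)=\big(1+(-1)^{n+\sum_j h_j}\big)\Lambda_n(\mathbf h)+O(1)$ with $\Lambda_n(\mathbf h)\sim\tfrac12(\sqrt{2r-1})^n(2\pi/\gamma n)^{r/2}e^{-|\mathbf h|^2/(2\gamma n)}$ for an explicit $\gamma>0$ tied to the variance of Theorem~\ref{centlim}.

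Then I would finish the comparison for even $n$. If $\sum_j h_j$ is odd then $W_{r,n,\mathbf h}=0<W_{r,n,\mathbf 0}$ outright; if $\sum_j h_j$ is even, then $W_{r,n,\mathbf 0}-W_{r,n,\mathbf h}=4(\sqrt{2r-1})^n\big(\Lambda_n(\mathbf 0)-\Lambda_n(\mathbf h)\big)+2(r-1)+O\!\big((\sqrt{2r-1})^n\big)$, and $\Lambda_n(\mathbf 0)-\Lambda_n(\mathbf h)\sim\tfrac12(\sqrt{2r-1})^n(2\pi/\gamma n)^{r/2}\big(1-e^{-|\mathbf h|^2/(2\gamma n)}\big)$ is strictly positive, of order $|\mathbf h|^2(\sqrt{2r-1})^n n^{-r/2-1}$. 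Since $(2r-1)^n n^{-r/2-1}$ divided by $(\sqrt{2r-1})^n$ tends to $\infty$, this leading term swamps the $O((\sqrt{2r-1})^n)$ remainder, giving $W_{r,n,\mathbf 0}>W_{r,n,\mathbf h}$ for all large even $n$ with the difference diverging. I would record the same conclusion for conjugacy classes via a homology-refined form of \eqref{cclasses}, the gap $\asymp(2r-1)^n n^{-r/2-1}$ comfortably outgrowing the $O((2r-1)^{n/2})$ error; ``mode at $\mathbf 0$ for reasons of symmetry'' is then just the involution $\mathbf h\mapsto-\mathbf h$ (replace $w$ by $w^{-1}$) combined with this unimodality.

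The main obstacle is exactly the ``careful analysis of the error terms'' that the remark anticipates: carrying the Laplace expansion far enough to be sure $\Lambda_n(\mathbf 0)-\Lambda_n(\mathbf h)$ is \emph{genuinely} of order $n^{-r/2-1}$ with a \emph{positive} coefficient, rather than merely $O(n^{-r/2-1})$ with uncontrolled sign. Concretely this means tracking the quartic-and-higher corrections to $\operatorname{arccosh}(cu(\btheta))$ at $\btheta=\mathbf 0$ and checking that the $\mathbf 0$- and $\boldsymbol{\pi}$-contributions do not conspire to cancel the relevant term; note that Theorem~\ref{walks} alone (or the bare Central Limit Theorem~\ref{centlim}) only gives the leading order, i.e. that the ratio tends to $1$, so one really must go one order further. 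All of this is elementary --- Laplace's method on the $1/\sqrt n$ scale, exactly as in the proof of Theorem~\ref{centlim} --- but it is where the actual work, and the only genuine subtlety, lies; once the $1/n$-coefficient is pinned down with its sign, the inequality is bookkeeping.
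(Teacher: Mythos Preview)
The paper offers no proof of this Observation at all --- it is exactly what its label says, a heuristic remark linking the paper's limiting-distribution results to Sarnak's phenomenon, ending with the explicit disclaimer that ``a careful analysis of the error terms is required,'' an analysis the paper does not perform. The only rigorous result in this direction that the paper \emph{does} prove is the $\bmod\ p$ analogue, Theorem~\ref{evenbias}, where the Fourier sum \eqref{cossum} is finite and the inequality $\mathcal{N}_{n,q}<\mathcal{N}_{n,0}$ drops out of the monotonicity of cosine together with properties \eqref{propa}--\eqref{propf}. The integral-homology inequality you set out to establish is not proved anywhere in the paper.

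Your plan --- pass to the Fourier integral on the torus via Theorem~\ref{homoenum}, split off the bounded region $\{|cu|<1\}$, and run Laplace's method near the two maxima $\mathbf 0$ and $\boldsymbol\pi$ --- is the natural route and is essentially the local-limit refinement that the paper's 2010 introduction says ``goes through without much change'' but never writes down. You have also put your finger on the genuine issue: Theorem~\ref{centlim} (or Theorem~\ref{walks}) gives only the leading term, hence only the \emph{ratio} $W_{r,n,\mathbf h}/W_{r,n,\mathbf 0}\to 1$; extracting the sign of the \emph{difference} requires the next term in the Laplace expansion, and one must check that the higher-order corrections to $\operatorname{arccosh}(cu(\btheta))$ and to $e^{-i\langle\mathbf h,\btheta\rangle}$ do not overwhelm the $(1-e^{-|\mathbf h|^2/(2\gamma n)})$ factor. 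That bookkeeping is straightforward but real, and it is precisely what the paper's caveat anticipates and leaves undone. In short: there is nothing to compare your argument against --- you are supplying a proof the paper does not give, and your outline and its self-identified obstacle are both on target.
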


\section{Counting conjugacy classes}

\label{gf}

Consider a finitely presented group $G.$ Let $g$ be an element of
$G$. We define the \emph{reduced length} of $g$ -- denoted by $|g|$ --
to be the length of the shortest word in the generators of $G$
representing $g$. We define the \emph{length up to conjugacy} of
$g$ -- denoted by $|g|_c$ -- to be the minimum of $|h|$, the minimum
being taken over all group elements $h$ conjugate to $g$. 
Length up to conjugacy is obviously invariant under conjugation,
and we will also use the term to apply to conjugacy classes.

\begin{gather*}
\mathcal{ N}_G(r) = \left|\{g\in G \bigm| \quad |g| = r\} \right|,\\
\mathcal{ C}_G(r) = \left|\{g\in \mathcal{ N}_G(r) \bigm| |g|_c = r\} \right|,\\
\mathcal{ C}\mathcal{ C}_G(r) = \left|\{C \in G/\mbox{conjugacy} \bigm| |C|_c =r\} \right|.
\end{gather*}

The subscript $G$ will be omitted whenever the group $G$ is obvious
from context.

Given a sequence $A=a_0, \ldots, a_i, \ldots$, we can define a
\emph{generating function} $\mathcal{ F}[A]$, by 
\[\mathcal{ F}[A](z)=\sum_{i=0}^\infty a_i z^i.\]
There is frequently confusion as to whether the generating function is
a holomorphic function or an element of the ring of formal power
series. In this section ``generating function'' will mean a function
analytic at $0 \in {\bf C}$. 

The three counting functions above give rise to corresponding
generating functions $\mathcal{ F}[\mathcal{ N}_G]$, $\mathcal{ F}[\mathcal{ C}_G]$,  
$\mathcal{ F}[\mathcal{ C}\mathcal{ C}_G]$. Our real interest will lie in the last
of these; the first one has been the most extensively studied, and the
result most relevant to us is:

\medskip\noindent
{\bf Fact 1.} If $G$ is an \emph{automatic} group, then the generating
function $\mathcal{ F}[\mathcal{ N}_G]$ is a rational function.

For definitions and properties of automatic groups, see
\cite{wordprocessing}.

\medskip\noindent
{\bf Fact 2.}(Gromov, Epstein) If $G$ is an automatic group, then the
generating function $\mathcal{ F}[\mathcal{ C}_G]$ is a rational function.

Facts 1 and 2 might lead us to expect that $\mathcal{ F}[\mathcal{ C}\mathcal{
C}_G]$ is, likewise, rational, but in fact the opposite seems to be
the case, and we are led to:

\begin{conjecture}
\label{conj1}
Let $G$ be a word-hyperbolic group. The $\mathcal{ F}[\mathcal{ C}\mathcal{ C}_G]$
is rational if and only if $G$  is virtually cyclic ({\em
elementary} in the terminology of \cite{gromovgroups}).
\end{conjecture}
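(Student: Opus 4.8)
\textbf{Proof proposal for Conjecture \ref{conj1}.}

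The plan is to attack the two directions separately, the ``if'' direction being elementary and the ``only if'' direction being the real content. For the ``if'' direction: when $G$ is virtually cyclic, the number of conjugacy classes of each cyclically reduced length is bounded (indeed, a virtually cyclic group has only finitely many conjugacy classes of infinite-order elements of each translation length, up to the finite kernel), so $\mathcal{F}[\mathcal{CC}_G]$ is a polynomial plus a finite linear combination of geometric series, hence rational. This is straightforward and I would dispatch it in a sentence or two.

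For the ``only if'' direction I would proceed as follows. First, reduce to understanding the growth of $\mathcal{CC}_G(r)$ in terms of $\mathcal{C}_G(r)$. As in equation \eqref{cclasses}, the number of conjugacy classes of cyclically reduced length $r$ is obtained from the number $\mathcal{C}_G(r)$ of cyclically reduced words by a ``necklace'' count: each conjugacy class of a primitive element of length $r$ corresponds to an orbit of size exactly $r$ under cyclic rotation, while imprimitive elements contribute orbits of smaller size. By Möbius inversion over the divisor lattice, $\mathcal{F}[\mathcal{CC}_G]$ is expressible as a Lambert-type series $\sum_{n\geq 1} \mathcal{P}_G(n)\, z^n/(1-z^n)$-style combination built from the primitive-class counts, and the key point is that when $G$ is non-elementary word-hyperbolic, $\mathcal{C}_G(r)$ grows like $c\lambda^r$ with $\lambda>1$ (growth rate strictly exceeding $1$, by the standard fact that non-elementary hyperbolic groups have exponential growth and automaticity gives a rational generating function with a dominant pole inside the unit disc). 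The denominators $1-z^n$ then force poles at \emph{every} root of unity $e^{2\pi i p/q}$ with residues that do not cancel, because the coefficients attached to them grow exponentially; an analytic function with poles accumulating densely on a circle cannot be rational.

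The technical heart — and the step I expect to be the main obstacle — is showing the \emph{non-cancellation} of these contributions, i.e.\ that the function genuinely has infinitely many poles rather than an apparent accumulation that collapses. Concretely, one must show that the Lambert series $\sum_n a_n z^n/(1-z^n)$ with $a_n$ of exponential type $\lambda^n$ is not rational: one picks a primitive $q$-th root of unity $\zeta$, examines $\lim_{z\to\zeta}(1-z/\zeta)\mathcal{F}[\mathcal{CC}_G](z)$ along a radial approach, and extracts from the divisibility structure that this residue is (up to a nonzero constant) $\sum_{q \mid n} a_n \zeta^{?}$, which is nonzero for infinitely many $q$ because the $a_n$ are eventually positive and dominated by their top term. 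I would organize this as: (i) establish the exact combinatorial identity relating $\mathcal{F}[\mathcal{CC}_G]$, $\mathcal{F}[\mathcal{C}_G]$ and a primitive-count series via cyclic-word counting and Möbius inversion; (ii) invoke Facts 1--2 to get rationality of the input data and locate its dominant singularity strictly inside $|z|<1$; (iii) prove the Lambert-series rationality obstruction by the residue-at-roots-of-unity argument above; (iv) assemble, noting that $G$ elementary is precisely the case where the exponential growth fails and the argument (correctly) breaks down. The delicate part of (iii) is handling the lower-order terms of $a_n$ uniformly, for which the precise asymptotics coming from the dominant pole of the rational function $\mathcal{F}[\mathcal{C}_G]$ (as in \cite{coornaertasymp}) are exactly what is needed.
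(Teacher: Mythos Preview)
The statement you are attempting to prove is stated in the paper as a \emph{conjecture}, not a theorem. The paper does not claim a proof; it explicitly says that the conjecture ``is supported by the complete analysis of the case where $G$ is $F_k$'' and then proves only that special case (Theorem~\ref{finthm}). So there is no proof in the paper to compare your proposal against, and what you have written is an attempted resolution of an open problem.

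That said, your proposal has a real gap in the ``only if'' direction, precisely at step (i). The identity $r\,\mathcal{CC}(r)=\sum_{d\mid r}\phi(d)\,\mathcal{C}(r/d)$ (Theorem~\ref{toti} in the paper) is proved by applying Burnside's lemma to the action of $\mathbf{Z}/r\mathbf{Z}$ on cyclically reduced words of length $r$. This works because in a free group the minimal-length representatives of a conjugacy class are exactly the cyclic rotations of a single cyclically reduced word, so they form one orbit of size $r$ (or $r/d$ in the imprimitive case). In a general word-hyperbolic group there is no such cyclic-rotation structure on the set $\{g:|g|=|g|_c=r\}$: the minimal-length representatives of a given conjugacy class need not be related by anything like rotation, their number need not divide $r$, and there is no evident $\mathbf{Z}/r\mathbf{Z}$-action to which Burnside applies. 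So the M\"obius/Lambert identity you rely on simply is not available, and without it the rest of the argument (locating poles at all roots of unity via the Lambert series) never gets off the ground. Fact~2 tells you $\mathcal{F}[\mathcal{C}_G]$ is rational, but it does not by itself give you the divisor-sum relation to $\mathcal{F}[\mathcal{CC}_G]$ that your residue computation needs.

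A smaller issue: even the ``if'' direction is not as automatic as you suggest. The paper's own remarks (see the comment on Bieberbach groups after Corollary~\ref{cor2}) indicate that rationality of $\mathcal{F}[\mathcal{CC}_G]$ may depend on the choice of generating set, and a virtually cyclic group that is not a direct product $\mathbf{Z}\times(\text{finite})$ would require a separate argument.
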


In the sequel, this conjecture is supported by the complete
analysis of the case where $G$ is $F_k$ -- the free group on $k$
generators.

\section{Growth functions for free groups}
\label{gf2}

Let $F_k$ be the free group on $k$ generators. The following is
obvious:

\medskip\noindent{Fact 3.} $\mathcal{ N}_{F_k}(r) = 2 k (2k-1)^{r-1}.$

Theorem \ref{cycredno} says that \[
\mathcal{ C}_{F_k}(r) = (2k-1)^r + 1 +
(k-1)[1+(-1)^r].\]

\begin{corollary}
\label{expcor}
\[
\mathcal{ F}[\mathcal{ C}_{F_k}](z) = {\frac1{1-(2k-1)z}} + {\frac1{1-z}} +
{\frac{2(k-1)}{1-z^2}} - 2k. 
\]
\end{corollary}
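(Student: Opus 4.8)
The plan is to compute the generating function $\mathcal{F}[\mathcal{C}_{F_k}](z) = \sum_{r=0}^\infty \mathcal{C}_{F_k}(r)\, z^r$ directly from the closed-form expression for $\mathcal{C}_{F_k}(r)$ supplied by Theorem \ref{cycredno}, namely $\mathcal{C}_{F_k}(r) = (2k-1)^r + 1 + (k-1)[1 + (-1)^r]$. The key observation is that this is a finite sum of terms of the form $a^r$ (for $a \in \{2k-1, 1, -1\}$), each of which has an elementary geometric generating function $\sum_{r\ge 0} a^r z^r = 1/(1-az)$.

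Concretely, I would split the sum term by term. The $(2k-1)^r$ piece contributes $\sum_{r\ge0}(2k-1)^r z^r = 1/(1-(2k-1)z)$. The constant $1$ together with the $(k-1)\cdot 1$ from the bracket combine into a constant $k$, but it is cleaner to keep them separate: the standalone $1$ gives $\sum_{r\ge0} z^r = 1/(1-z)$, and the $(k-1)[1+(-1)^r]$ term gives $(k-1)\sum_{r\ge0}(1+(-1)^r)z^r = (k-1)\left[\frac{1}{1-z} + \frac{1}{1+z}\right] = \frac{2(k-1)}{1-z^2}$. Summing these yields $\frac{1}{1-(2k-1)z} + \frac{1}{1-z} + \frac{2(k-1)}{1-z^2}$.

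The only remaining subtlety is the additive constant $-2k$ appearing in the corollary. This comes from the fact that Theorem \ref{cycredno}'s formula, evaluated naively at $r=0$, does not give the correct value of $\mathcal{C}_{F_k}(0)$: the empty word is the unique cyclically reduced word of length $0$, so $\mathcal{C}_{F_k}(0) = 1$, whereas the formula gives $(2k-1)^0 + 1 + (k-1)\cdot 2 = 1 + 1 + 2k-2 = 2k+1$. Hence the three geometric series overcount the $r=0$ coefficient by $(2k+1) - 1 = 2k$, and we must subtract $2k$ to correct it. (One could alternatively note that the formula's $m$-th coefficient is correct for all $m\ge 1$ and only the constant term needs adjustment.)

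I do not expect any real obstacle here — the computation is entirely routine, being just a decomposition into geometric series plus a boundary-term bookkeeping correction at $r=0$. The one point requiring a moment's care is precisely that $r=0$ discrepancy; it would be easy to state the corollary without the $-2k$ and be wrong, so the ``hard part,'' such as it is, is simply remembering to check the constant term rather than blindly applying the formula of Theorem \ref{cycredno} at $r=0$.
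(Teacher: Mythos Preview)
Your approach is exactly what the paper intends --- the corollary is stated without proof, as an immediate consequence of summing the geometric series arising from Theorem \ref{cycredno}. However, your bookkeeping at $r=0$ contains two compensating slips. First, $1+1+2k-2=2k$, not $2k+1$; the three geometric series contribute $2k$ to the constant term. Second, the corollary's constant term is $1+1+2(k-1)-2k=0$, so the convention here is $\mathcal{C}_{F_k}(0)=0$ (the generating function records only words of positive length, or equivalently the $m=0$ case of the trace formula --- which counts the $2k$ trivial circuits at each vertex --- is simply discarded). Thus the correct accounting is that the series overcount by $2k-0=2k$, not $(2k+1)-1$. Your final correction of $-2k$ is right, but the justification should be repaired.
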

In order to compute $\mathcal{ C}\mathcal{ C}_{F_k}(r)$ it is enough to notice
the following:

\begin{theorem}
\label{toti}
\[ r \mathcal{ C}\mathcal{ C}(r) = \sum_{d\bigm|r} \phi(d) \mathcal{ C}(r/d),\]
where $\phi$ denotes the Euler totient function.
\end{theorem}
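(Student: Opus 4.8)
The plan is to count, in two different ways, the set of pairs $(w, C)$ where $C$ is a conjugacy class of cyclically reduced length exactly $r$ and $w$ is a cyclically reduced \emph{word} of length $r$ representing an element of $C$. A cyclically reduced word of length $r$ is the same thing as a \emph{cyclic word} together with a choice of basepoint, i.e. a necklace on the alphabet $a_1, A_1, \dots, a_k, A_k$ with no two adjacent letters inverse (and no cyclic adjacency of inverses). The number of such words of length $r$ is $\mathcal{C}_{F_k}(r)$, by Theorem~\ref{cycredno} (reinterpreting the count of cyclically reduced words). On the other hand, a conjugacy class $C$ of cyclically reduced length $r$ corresponds to an equivalence class of such words under cyclic rotation, and the number of distinct words in that class equals $r/\mathrm{per}(C)$, where $\mathrm{per}(C)$ is the primitive period; but it is cleaner to organize the count by the primitive root.

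The key step is the standard orbit-counting/Burnside bookkeeping. First I would set up the action of the cyclic group $\integers/r\integers$ on the set $X_r$ of cyclically reduced words of length $r$ by rotation; then $|X_r| = \mathcal{C}_{F_k}(r)$ and the orbits are exactly the conjugacy classes of length $r$, so $\mathcal{CC}_{F_k}(r) = \sum_{C} 1$ over orbits. The length of the orbit through $w$ is $r/e(w)$ where $e(w)$ is the number of rotations fixing $w$; a word $w$ is fixed by rotation by $d$ iff $d$ is a multiple of the minimal period $p \mid r$, so the stabilizer has order $r/p$. Grouping words by their primitive period $p$ (equivalently, by the primitive conjugacy class of which they are a power), the number of length-$r$ words with minimal period $p$ is $p \cdot \mathcal{CC}^{\mathrm{prim}}_{F_k}(p)$ where $\mathcal{CC}^{\mathrm{prim}}_{F_k}(p)$ counts primitive conjugacy classes of length $p$; hence
\[
\mathcal{C}_{F_k}(r) \;=\; \sum_{p \mid r} p\, \mathcal{CC}^{\mathrm{prim}}_{F_k}(p).
\]
By Möbius inversion this gives $r\,\mathcal{CC}^{\mathrm{prim}}_{F_k}(r) = \sum_{d\mid r}\mu(d)\,\mathcal{C}_{F_k}(r/d)$. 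Then, since every conjugacy class of length $r$ is uniquely a power $C_0^{r/p}$ of a primitive class $C_0$ of some length $p\mid r$, we get $\mathcal{CC}_{F_k}(r) = \sum_{p\mid r}\mathcal{CC}^{\mathrm{prim}}_{F_k}(p)$. Substituting the formula for $\mathcal{CC}^{\mathrm{prim}}$ and interchanging the order of summation, the inner sum over the relevant divisors collapses via the identity $\sum_{e\mid m}\mu(e)\cdot (m/e)/(\text{something}) $; more directly, one checks that
\[
r\,\mathcal{CC}_{F_k}(r) \;=\; r\sum_{p\mid r}\mathcal{CC}^{\mathrm{prim}}_{F_k}(p) \;=\; \sum_{p\mid r}\frac{r}{p}\sum_{d\mid p}\mu(d)\,\mathcal{C}_{F_k}(p/d),
\]
and regrouping by the value $q = p/d$ yields $\sum_{q\mid r}\mathcal{C}_{F_k}(q)\sum_{d\mid r/q}\mu(d)\,\frac{r}{qd}$, whose inner sum is exactly $\phi(r/q)$ by the classical identity $\sum_{e\mid n}\mu(e)\,n/e = \phi(n)$. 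Reindexing $q = r/d'$ gives $r\,\mathcal{CC}_{F_k}(r) = \sum_{d'\mid r}\phi(d')\,\mathcal{C}_{F_k}(r/d')$, as desired.

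The main obstacle is purely organizational: keeping straight the three layers of counting (words, all conjugacy classes, primitive conjugacy classes) and making sure the Möbius/totient bookkeeping is done consistently, in particular the fact that a conjugacy class of length $r$ that is the $r/p$-th power of a primitive class of length $p$ contributes exactly $p$ distinct cyclically reduced words of length $r$ (this uses that in a free group an element of cyclically reduced length $r$ which is a proper power $v^n$ has $v$ of cyclically reduced length $r/n$, a clean consequence of the normal form). Once that structural fact is in hand, everything reduces to the elementary identity $\sum_{d\mid n}\phi(d) = n$ and its Möbius-dual; alternatively, one can bypass the primitive-class layer entirely and prove $r\,\mathcal{CC}_{F_k}(r) = \sum_{d\mid r}\phi(d)\,\mathcal{C}_{F_k}(r/d)$ in one stroke by a direct Burnside-type count: each cyclically reduced word $w$ of length $r$ lies in a conjugacy class of size $r/e(w)$, so summing $1 = \sum_{w\in C} e(w)/r$ over $C$ gives $\mathcal{CC}_{F_k}(r) = \frac1r\sum_{w\in X_r} e(w)$, and sorting the words $w$ by the divisor $d = r/e(w)$ (the period) identifies $\sum_{w: \text{period }r/d} e(w)$ with $\phi$-weighted counts of shorter cyclically reduced words, which is the claimed formula. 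I would present whichever of these two routes is shorter after checking the constants.
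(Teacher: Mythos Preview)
Your argument is correct, but it is considerably more elaborate than the paper's. The paper simply applies Burnside's lemma directly to the action of $\integers/r\integers$ on the set $X_r$ of cyclically reduced words of length $r$: a rotation by $j$ fixes exactly those words whose period divides $\gcd(j,r)$, and there are precisely $\mathcal{C}(\gcd(j,r))$ of them (a cyclically reduced word of length $e$ repeated $r/e$ times is again cyclically reduced). Grouping the $r$ rotations by the value $e=\gcd(j,r)$ gives $\phi(r/e)$ rotations for each divisor $e$, and the formula drops out in one line.

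Your main route instead introduces an intermediate layer of \emph{primitive} conjugacy classes, proves $\mathcal{C}(r)=\sum_{p\mid r}p\,\mathcal{CC}^{\mathrm{prim}}(p)$ and $\mathcal{CC}(r)=\sum_{p\mid r}\mathcal{CC}^{\mathrm{prim}}(p)$, M\"obius-inverts the first, substitutes into the second, and then uses $\sum_{d\mid n}\mu(d)\,n/d=\phi(n)$ to collapse the double sum. This is valid and has the merit of isolating the primitive classes (which the paper uses later in Section~\ref{ihara} anyway), but it is three steps where one suffices. Your sketched ``one-stroke Burnside-type count'' at the end is essentially the paper's argument, except that you phrase Burnside via $\sum_{w}|\mathrm{Stab}(w)|$ rather than $\sum_{g}|\mathrm{Fix}(g)|$; the $\phi$ factor emerges more transparently from the latter, so if you present that route, switch to summing over group elements.
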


\begin{proof}
The theorem is a trivial consequence of Burnside's lemma, stated below
as Theorem \ref{burn} for convenience, applied to the action of the
cyclic group ${\bf Z}/(r {\bf Z})$ on the set of cyclically reduced
words of length $r$.
\end{proof}

\begin{theorem}
\label{burn}
Let $G$ be a finite group acting on a finite set $X$. For $g\in G$ let
$\psi(g)$ denote the number of $x\in X$, such that $g(x) = x.$ Then
the number of orbits of $X$ under the $G$-action is
\[
\frac{1}{|G|} \sum_{g\in G} \psi(g).
\]
\end{theorem}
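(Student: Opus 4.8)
The plan is to prove Burnside's lemma (Theorem~\ref{burn}) by the classical double-counting argument applied to the ``incidence set'' of fixed points. Specifically, I would consider the set
\[
S = \{(g, x) \in G \times X \mid g(x) = x\},
\]
and compute $|S|$ in two different ways, by fibering the projection over $G$ and over $X$ respectively.

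First I would fiber over the first coordinate. For each $g \in G$, the fiber $\{x \in X \mid g(x) = x\}$ has cardinality $\psi(g)$ by definition, so $|S| = \sum_{g \in G} \psi(g)$. Next I would fiber over the second coordinate: for each $x \in X$, the fiber $\{g \in G \mid g(x) = x\}$ is precisely the stabilizer subgroup $\mathrm{Stab}(x)$, so $|S| = \sum_{x \in X} |\mathrm{Stab}(x)|$. Equating these gives $\sum_{g \in G} \psi(g) = \sum_{x \in X} |\mathrm{Stab}(x)|$.

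The remaining step is to evaluate the right-hand side using the orbit--stabilizer theorem, which gives $|\mathrm{Stab}(x)| = |G| / |Gx|$, where $Gx$ denotes the orbit of $x$. Hence
\[
\sum_{x \in X} |\mathrm{Stab}(x)| = |G| \sum_{x \in X} \frac{1}{|Gx|}.
\]
Now I would regroup the sum over $X$ according to orbits: if $X$ decomposes as a disjoint union of orbits $O_1, \dots, O_t$, then within a single orbit $O_j$ every element has $|Gx| = |O_j|$, so $\sum_{x \in O_j} 1/|Gx| = |O_j| \cdot (1/|O_j|) = 1$. Summing over the $t$ orbits yields $\sum_{x \in X} 1/|Gx| = t$, the number of orbits. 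Combining the chain of equalities gives $\sum_{g \in G} \psi(g) = |G| \cdot t$, which rearranges to the claimed formula. There is no real obstacle here; the only ingredient beyond bookkeeping is the orbit--stabilizer theorem, which I would either quote as standard or prove in one line via the bijection $G/\mathrm{Stab}(x) \to Gx$ sending a coset $g\,\mathrm{Stab}(x)$ to $g(x)$.
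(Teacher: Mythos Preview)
Your argument is correct; this is the standard double-counting proof of Burnside's lemma, and there is no gap. Note that the paper itself does not prove Theorem~\ref{burn} at all --- it merely states it ``for convenience'' as a well-known result and immediately applies it --- so there is nothing to compare your approach against.
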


We now have the following general observation:

\begin{theorem}
\label{genthm}
Suppose we have three sequences $A=\{a_i\}$, $B = \{b_j\},$
and $C= \{c_k\},$ satisfying

\[a_n = \sum_{d\bigm|n} c_d b_{\frac{n}{d}}.\]
Then 

\[\mathcal{ F}[A](z) = \sum_{d=1}^\infty c_d \mathcal{ F}[B](x^d).\]
\end{theorem}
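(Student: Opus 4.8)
The statement is a purely formal identity about Dirichlet-type convolutions and their associated power series, so the plan is to expand both sides and compare coefficients. First I would substitute the hypothesis $a_n = \sum_{d \mid n} c_d\, b_{n/d}$ directly into the definition $\mathcal{F}[A](z) = \sum_{n=1}^\infty a_n z^n$, obtaining a double sum $\sum_{n=1}^\infty \sum_{d \mid n} c_d\, b_{n/d}\, z^n$. The key manoeuvre is to reorganize this double sum by writing $n = d m$ where $d$ ranges over all positive integers and, for each fixed $d$, $m$ ranges over all positive integers; this is legitimate because the pairs $(d, m)$ with $d, m \geq 1$ are in bijection with the pairs $(n, d)$ with $d \mid n$, via $n = dm$.

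After this reindexing the sum becomes $\sum_{d=1}^\infty \sum_{m=1}^\infty c_d\, b_m\, z^{dm}$, and pulling out the $d$-sum gives $\sum_{d=1}^\infty c_d \left( \sum_{m=1}^\infty b_m (z^d)^m \right) = \sum_{d=1}^\infty c_d\, \mathcal{F}[B](z^d)$, which is exactly the claimed formula. So the whole argument is the interchange-and-reindex step followed by recognizing the inner series as $\mathcal{F}[B]$ evaluated at $z^d$.

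The only genuine subtlety — and the place I would be slightly careful — is the question of convergence, since the section has declared that ``generating function'' means a function analytic at $0$, not merely a formal power series. For a formal identity the reindexing needs no justification, but to treat these as honest analytic functions one should note that the rearrangement is an identity of formal power series first (the coefficient of $z^n$ on each side is $\sum_{d \mid n} c_d b_{n/d}$ after expansion, with only finitely many terms contributing to each coefficient), and then invoke that two power series which agree formally agree as analytic functions on any common disc of convergence. I expect this bookkeeping to be the ``hard part'' only in the sense of being the one point requiring a remark; the combinatorial core is the standard ``sum over divisors $\leftrightarrow$ sum over all factorizations'' swap and is immediate. One could also phrase the convergence point by simply observing that whenever the right-hand side converges, it converges absolutely (all the manipulations involve only nonnegative regroupings when the $a_i, b_j, c_k$ arise, as here, from counting problems), so Fubini/Tonelli for series applies and the interchange is unconditionally valid in the relevant range.
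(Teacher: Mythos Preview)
Your proposal is correct and follows essentially the same approach as the paper: expand, reindex via $n=dm$, and recognize the inner sum. The paper dispatches the formal identity in one sentence (``clear by expanding the left hand side'') and then handles analyticity slightly differently, invoking Hadamard's criterion to pin down the radius of convergence of each $\mathcal{F}[B](z^d)$ and arguing uniform convergence on compact subsets of a common disc, whereas you appeal to the formal-equals-analytic principle and Fubini for nonnegative coefficients; both treatments are adequate for the intended application.
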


\begin{proof}
On the level of formal power series, the statement is
clear by expanding the left hand side. Otherwise, if the radius of
convergence of $\mathcal{ F}[A]$ is $r_a$, 
then the radius of convergence of $G_d[A]$, defined as $G_d[A](z) =
\mathcal{ F}[A](z^d)$ is, by Hadamard's criterion, equal to $r_a^{1/d}$, so all
of $G_d[A]$ converge on the disk of radius $R_a=\min(r_a, 1)$ around the
origin. Since the series on the right hand side converges at $0$
(since all the terms vanish), it converges uniformly on compact
subsets of the disk of radius $R_a$ around the origin.
\end{proof}

\begin{corollary}
\label{stupcor}
Let $\mathcal{ H}$ be the generating function of the sequence $h_r = r
\mathcal{ C}\mathcal{ C}(r).$ Then
\[
\mathcal{ H}(z) = 1+ \sum_{d=1}^\infty \phi(d)\mathcal{ F}[\mathcal{ C}](z^d).
\]
\end{corollary}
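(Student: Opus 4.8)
The plan is to derive the corollary by feeding the divisor‑sum identity of Theorem \ref{toti} into the general Lambert‑series transformation of Theorem \ref{genthm}; no new estimates are needed, only a verification that the hypotheses of the latter apply. Concretely, from Theorem \ref{toti} one has, for every $r\geq 1$, that the coefficients of $\mathcal{H}$ satisfy $h_r = r\,\mathcal{C}\mathcal{C}(r) = \sum_{d\mid r}\phi(d)\,\mathcal{C}(r/d)$. This is exactly the hypothesis $a_n = \sum_{d\mid n} c_d\, b_{n/d}$ of Theorem \ref{genthm}, taken with $A=\{h_r\}$, $C=\{\phi(d)\}$, and $B=\{\mathcal{C}(r)\}$, so that $\mathcal{F}[A]=\mathcal{H}$ and $\mathcal{F}[B]=\mathcal{F}[\mathcal{C}]$. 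Applying Theorem \ref{genthm} yields $\mathcal{H}(z)=\sum_{d=1}^{\infty}\phi(d)\,\mathcal{F}[\mathcal{C}](z^d)$; the leading $1$ in the displayed formula is the bookkeeping contribution of the $r=0$ term, namely $\mathcal{C}(0)=1$ for the empty word -- equivalently $\mathcal{C}\mathcal{C}(0)=1$, the conjugacy class of the identity -- carried along when one collects constant terms as in the proof of Theorem \ref{genthm}.

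The only analytic point to check is the convergence requirement built into Theorem \ref{genthm}, that all the series involved converge on a common disc about the origin. By Theorem \ref{cycredno}, $\mathcal{C}(r) = (2k-1)^r + O(1)$, so $\mathcal{F}[\mathcal{C}]$ has radius of convergence $1/(2k-1)$; combining this with the estimate \eqref{cclasses}, $\mathcal{C}\mathcal{C}(r) = \mathcal{C}(r)/r + O(\sqrt{\mathcal{C}(r)})$, shows that $h_r = r\,\mathcal{C}\mathcal{C}(r)$ also grows like $(2k-1)^r$, so $\mathcal{H}$ has radius of convergence $r_a = 1/(2k-1) < 1$. Hence, in the notation of the proof of Theorem \ref{genthm}, $R_a = \min(r_a,1) = r_a$, and the Lambert series $\sum_{d\ge 1}\phi(d)\,\mathcal{F}[\mathcal{C}](z^d)$ converges uniformly on compact subsets of $|z| < r_a$, so the transformation is an identity of honest analytic functions and not merely of formal power series.

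I do not anticipate a genuine obstacle here: all of the content has already been front‑loaded into Theorem \ref{toti} (an application of Burnside's lemma) and Theorem \ref{genthm} (the Hadamard‑criterion argument for Lambert series). The one step that calls for a moment's care is the constant‑term bookkeeping -- whether $\mathcal{F}[\mathcal{C}]$ is normalized to include $\mathcal{C}(0)$, which is what produces the explicit $1$ -- together with the routine observation, made above, that the geometric growth of $\mathcal{C}(r)$ keeps every series in sight safely inside a fixed disc about the origin.
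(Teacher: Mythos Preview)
Your proposal is correct and follows exactly the route the paper intends: the corollary is stated immediately after Theorem~\ref{genthm} and is nothing more than the specialization $A=\{r\,\mathcal{C}\mathcal{C}(r)\}$, $B=\{\mathcal{C}(r)\}$, $C=\{\phi(d)\}$, with the divisor-sum hypothesis supplied by Theorem~\ref{toti}. Your added convergence check is more than the paper bothers to spell out; the only quibble is that your justification of the stray ``$1$'' via $\mathcal{C}(0)=1$ does not quite match the paper's normalization (Corollary~\ref{expcor} has $\mathcal{F}[\mathcal{C}](0)=0$), but you rightly flag this as a constant-term bookkeeping issue rather than a substantive point.
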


We can combine all of the above results into the following conclusion:
\begin{theorem}
\label{finthm}
The generating function $\mathcal{ H}$ as in the statement of corollary
\ref{stupcor} can be expanded as:

\[
\mathcal{ H}=1 +  (k-1)\frac{x^2}{(1-x^2)^2} + 
\sum_{d=1}^\infty \phi(d)\left(\frac{1}{1-(2k-1)x^d}-1\right).
\]

In particular, $\mathcal{ H}$ has an infinite number of poles, and is not
a rational function for any $k>1.$ The generating function $\mathcal{ F}[\mathcal{ C}\mathcal{
C}_{F_k}]$ can be written as 
\[\mathcal{ F}[\mathcal{ C}\mathcal{ C}_{F_k}](z) = \int_0^z  \frac{\mathcal{
H}(t)}{t} d t\]
and so is not a rational function either.
\end{theorem}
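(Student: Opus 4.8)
The plan is to assemble Theorem~\ref{finthm} entirely from the machinery already in place, doing genuine work only on the convergence and irrationality claims. First I would take Corollary~\ref{expcor}, which gives $\mathcal{F}[\mathcal{C}_{F_k}](z)$ as an explicit sum of simple rational functions, and substitute it into Corollary~\ref{stupcor}, i.e.\ form $\mathcal{H}(z) = 1 + \sum_{d=1}^\infty \phi(d)\,\mathcal{F}[\mathcal{C}](z^d)$. The terms $1/(1-z^d)$ and $2(k-1)/(1-z^{2d})$ coming from $\mathcal{F}[\mathcal{C}]$ need to be recombined: the constant $-2k$ cancels against the $d=1$ contribution of the harmless pieces and, after regrouping the geometric-type series $\sum_d \phi(d)\bigl(\tfrac{1}{1-z^d}+\tfrac{2(k-1)}{1-z^{2d}}\bigr)$, one recognizes a Lambert-series identity collapsing it to the closed form $(k-1)\,x^2/(1-x^2)^2$ plus the $d=1$ leftovers. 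This is a bookkeeping computation with Lambert series ($\sum_d \phi(d) z^d/(1-z^d) = z/(1-z)^2$) and I would only sketch it, citing \cite{hardywright}. What remains visible is the genuinely non-rational part $\sum_{d=1}^\infty \phi(d)\bigl(\tfrac{1}{1-(2k-1)x^d}-1\bigr)$, yielding exactly the displayed formula for $\mathcal{H}$.

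Next I would justify that this series defines a bona fide analytic function on a disk: each summand $\phi(d)\bigl(\tfrac{1}{1-(2k-1)x^d}-1\bigr) = \phi(d)\sum_{j\ge 1}(2k-1)^j x^{jd}$ vanishes to order $d$ at the origin, and on $|x|\le \rho$ with $\rho<1/(2k-1)$ one has a crude geometric bound, so the series converges absolutely and uniformly on compact subsets of $\{|x|<1/(2k-1)\}$ — this is exactly the situation covered by Theorem~\ref{genthm}, so I can invoke it directly rather than re-run the estimate. Then the integral representation $\mathcal{F}[\mathcal{C}\mathcal{C}_{F_k}](z) = \int_0^z \mathcal{H}(t)/t\,dt$ follows from the definition $h_r = r\,\mathcal{C}\mathcal{C}(r)$: dividing the power series $\mathcal{H}(t) = \sum_r h_r t^r$ (note $h_0=1$ but $\mathcal{C}\mathcal{C}(0)=0$, so one must check the constant term integrates correctly — in fact the ``$1+$'' must be handled, and the cleanest fix is to note the integrand $\mathcal{H}(t)/t$ has the term-by-term antiderivative $\sum_{r\ge 1} \tfrac{h_r}{r} z^r = \sum_{r\ge 1}\mathcal{C}\mathcal{C}(r) z^r$) by $t$ and integrating term by term, legitimate on the disk of convergence.

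For the irrationality of $\mathcal{H}$: the poles of $1/(1-(2k-1)x^d)$ are the points $x$ with $x^d$ a $(2k-1)$-st root of unity scaled to modulus $1/(2k-1)$, in particular $x = (2k-1)^{-1/d}$ is a real pole for every $d$ with $\phi(d)>0$ (all $d$). These accumulate at $x=1$ from below and, being a sequence of distinct points, cannot all be poles of a single rational function, which has finitely many. I would make this rigorous by observing that near each $x_d = (2k-1)^{-1/d}$ with $d$ large, all other summands are analytic (their nearest singularities to $x_d$ are bounded away, since for $d'\ne d$ the pole locus $(2k-1)^{-1/d'}$ times roots of unity stays at positive distance, and the tail $\sum_{d'>D}$ is uniformly small there), so $\mathcal{H}$ genuinely has a simple pole at $x_d$; infinitely many poles forces non-rationality. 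Finally $\mathcal{F}[\mathcal{C}\mathcal{C}_{F_k}]$ is non-rational because its derivative $\mathcal{H}(z)/z$ would then be rational (derivatives of rational functions are rational), contradicting what we just proved — here one only needs $\mathcal{H}(z)/z$ non-rational, which is immediate from $\mathcal{H}$ non-rational.

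The main obstacle I expect is not any single deep step but the careful handling of the accumulation point $x=1$: one must argue that infinitely many of the candidate poles $(2k-1)^{-1/d}$ are \emph{actual} poles of $\mathcal{H}$, i.e.\ that the residue contributions from different $d$ do not conspire to cancel. Since the $x_d$ are pairwise distinct and each pole-generating term has a distinct singular locus, near a fixed $x_d$ every other term is holomorphic, so no cancellation is possible term-by-term — but writing this cleanly (a local analyticity estimate for the tail near $x_d$, uniform in $d$) is the one place the argument needs genuine care rather than citation. Everything else reduces to Lambert-series algebra and the already-proven Corollaries~\ref{expcor}, \ref{stupcor} and Theorem~\ref{genthm}.
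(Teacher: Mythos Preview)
Your proposal is correct and follows essentially the same route as the paper: substitute Corollary~\ref{expcor} into Corollary~\ref{stupcor}, collapse the ``harmless'' terms via the identity $\sum_{d\mid n}\phi(d)=n$ (equivalently, the Lambert series you cite), and then argue non-rationality from the infinitely many poles. The paper's proof is much terser than yours---in particular it dispatches your ``main obstacle'' in one line by noting that the poles of the $d$-th summand all lie on the circle $|z|=(2k-1)^{-1/d}$, and since these circles are pairwise disjoint for distinct $d$ (when $k>1$) no cancellation between terms is even possible; you may want to adopt this cleaner observation in place of your local-analyticity-of-the-tail estimate.
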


\begin{proof}
The expression for $\mathcal{ H}$ is fairly obvious, with the comment that
the second summand is a consequence of the fact that 
\[\sum_{d\bigm|n} \phi(d) = n.\]That $\mathcal{ H}$ has an infinite number of
poles follows from the observation that the $d$-th term in the third
summand has its $d$ poles on the circle $|z| = (2k-1)^{-1/d}$, while the
first two summands are analytic in the open unit disk.
The expression for $\mathcal{ F}[\mathcal{ C}\mathcal{ C}_{F_k}]$ is immediate.
\end{proof}

\begin{remark}
For $k=1,$ it is not hard to see that
\[
\mathcal{H}=1+\dfrac{x}{(x-1)^2}.
\]
\end{remark}

\medskip\noindent
{\bf Remark.} Various people, when shown Theorem \ref{finthm},
appeared to believe that it contradicts \cite[Theorem
5.2D]{gromovgroups}. In fact (as pointed out by Greg McShane), Gromov's
function $[N]_k$ is {\em not} (as the common misunderstanding has it)
the same as $\mathcal{ C}\mathcal{ C}_G(r)$ in the case of a free group, but
{\em is} the same as $\mathcal{ C}_G(r)$. 

\subsection{Some further comments}
\label{gf3}

The following observation is quite obvious:

\begin{observation}
\label{prodthm}
Let $G_1$ and $G_2$ be two groups. Then, 
\[
\mathcal{ F}[\mathcal{ C}\mathcal{ C}_{G_1 \times G_2}](z) = \mathcal{ F}[\mathcal{
C}\mathcal{ C}_{G_1}](z) \mathcal{ F}[\mathcal{ C}\mathcal{ C}_{G_2}](z).
\]
\end{observation}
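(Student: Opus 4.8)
The plan is to reduce the statement about conjugacy classes in the direct product $G_1 \times G_2$ to the corresponding statement about \emph{elements}, where the product structure is manifest, and then restore the passage to conjugacy classes via the Dirichlet-series bookkeeping already developed in Theorems \ref{toti} and \ref{genthm}. First I would observe that a conjugacy class in $G_1 \times G_2$ is the same thing as a pair consisting of a conjugacy class in $G_1$ and a conjugacy class in $G_2$; moreover, with the natural generating set for the product (the union of generating sets for the factors), the length up to conjugacy of $(C_1, C_2)$ is $|C_1|_c + |C_2|_c$. This gives immediately that the sequence $\mathcal{C}\mathcal{C}_{G_1\times G_2}$ is the \emph{additive convolution} of $\mathcal{C}\mathcal{C}_{G_1}$ and $\mathcal{C}\mathcal{C}_{G_2}$, i.e.
\[
\mathcal{C}\mathcal{C}_{G_1\times G_2}(r) = \sum_{a+b=r} \mathcal{C}\mathcal{C}_{G_1}(a)\,\mathcal{C}\mathcal{C}_{G_2}(b),
\]
and an additive convolution of sequences corresponds exactly to a product of ordinary generating functions (the Cauchy product), which is the claim. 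So the only real work is justifying the length formula for the product and checking convergence so that the identity holds as an identity of analytic functions, not merely formal power series.

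The length formula $|(g,h)|_c = |g|_c + |h|_c$ relative to the product generating set is where the one genuine point lies. For the element norm $|(g,h)| = |g| + |h|$ this is clear, since any word in the product generators spelling $(g,h)$ can be sorted (the two factor-alphabets commute with each other) into a $G_1$-part followed by a $G_2$-part without changing its image or increasing its length; taking minima over conjugates in $G_1\times G_2$, and using that conjugation respects the product decomposition, yields the conjugacy version. I would spell this out in a sentence or two rather than belabor it. Convergence is routine: the generating functions here have positive radius of convergence (this is implicit in the free-group computations of Sections \ref{gf}--\ref{gf2}, and in general follows from at-most-exponential growth of $\mathcal{C}\mathcal{C}_G$), so on a common disk about the origin the Cauchy product of the two series converges absolutely to the product of the sums.

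The main obstacle, such as it is, is purely expository: one must be slightly careful that ``length up to conjugacy'' is being measured with respect to a consistent choice of generators on both sides of the claimed identity — the statement is only meaningful once one fixes the product generating set on $G_1\times G_2$ — and that the factorization of conjugacy classes through $(G_1/\mathrm{conj}) \times (G_2/\mathrm{conj})$ is genuinely a bijection (it is: two elements $(g_1,g_2)$, $(h_1,h_2)$ are conjugate in the product iff $g_i$ is conjugate to $h_i$ in $G_i$ for each $i$). Once these two trivial-but-necessary remarks are in place, the identity of generating functions is nothing more than the observation that the Cauchy product of $\sum a_i z^i$ and $\sum b_j z^j$ has $n$-th coefficient $\sum_{a+b=n} a_a b_b$. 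I would therefore keep the proof to a short paragraph.
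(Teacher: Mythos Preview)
Your argument is correct and is exactly the natural one: conjugacy classes in $G_1\times G_2$ biject with pairs of conjugacy classes, the cyclically-reduced length is additive for the product generating set, and the Cauchy product of the two series follows. The paper gives no proof at all --- it simply declares the observation ``quite obvious'' --- so your write-up is, if anything, more careful than the source; your remarks about fixing the product generating set and checking the bijection on conjugacy classes are precisely the points one would want made explicit.
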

It would be interesting to find other relationships (for example, what happens for HNN extensions?)

Observation \ref{prodthm} has some consequences:

\thm{theorem}
{
\label{corfin}
Let $G_1$ and $G_2$ be two groups, then if 
$\mathcal{ F}[\mathcal{ C}\mathcal{ C}_{G_1}]$ is rational, while 
$\mathcal{ F}[\mathcal{ C}\mathcal{ C}_{G_2}]$ is not, then 
$\mathcal{ F}[\mathcal{ C}\mathcal{ C}_{G_1 \times G_2}]$ is not rational.
If both $\mathcal{ F}[\mathcal{ C}\mathcal{ C}_{G_1}]$ and
$\mathcal{ F}[\mathcal{ C}\mathcal{ C}_{G_2}]$ are rational, then so is
$\mathcal{ F}[\mathcal{ C}\mathcal{ C}_{G_1 \times G_2}]$.
}

\begin{corollary}
\label{cor2}
If $G_1 = \mathbf{Z}^n$ and $G_2$ is a finite group, then $\mathcal{
F}[\mathcal{ C}\mathcal{ C}_{G_1 \times G_2}]$ is rational.
\end{corollary}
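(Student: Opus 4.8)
The plan is to reduce the claim to the rationality of the two factors $\mathcal{F}[\mathcal{C}\mathcal{C}_{\mathbf{Z}^n}]$ and $\mathcal{F}[\mathcal{C}\mathcal{C}_{G_2}]$ and then invoke Theorem \ref{corfin}: since that theorem asserts that a product of two groups each having rational $\mathcal{C}\mathcal{C}$-generating function again has rational $\mathcal{C}\mathcal{C}$-generating function, it is enough to treat the two pieces separately. One should first record that the generating set for $G_1\times G_2$ tacitly used in Observation \ref{prodthm} (hence in Theorem \ref{corfin}) is the union of generating sets of the factors, so that the length on $G_1\times G_2$ is the sum of the lengths on the factors; this is precisely what makes the convolution identity underlying Observation \ref{prodthm} hold.

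For $G_2$ finite the argument is immediate: $G_2$ has only finitely many elements, hence only finitely many conjugacy classes, and with respect to any finite generating set every element has length at most $|G_2|$. Thus $\mathcal{C}\mathcal{C}_{G_2}(r)=0$ for all $r$ beyond some bound, so $\mathcal{F}[\mathcal{C}\mathcal{C}_{G_2}]$ is a polynomial, a fortiori rational.

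For $G_1=\mathbf{Z}^n$ with the standard generators $e_1,\dots,e_n$, the group is abelian, so conjugation is trivial: every conjugacy class is a singleton and $|g|_c=|g|$ for all $g$. Hence $\mathcal{C}\mathcal{C}_{\mathbf{Z}^n}(r)=\mathcal{N}_{\mathbf{Z}^n}(r)$, the number of lattice points $(a_1,\dots,a_n)$ with $|a_1|+\cdots+|a_n|=r$, and the generating function factors as an $n$-fold product of the one-dimensional series $\sum_{a\in\mathbf{Z}}z^{|a|}=1+2z/(1-z)=(1+z)/(1-z)$, so
\[
\mathcal{F}[\mathcal{C}\mathcal{C}_{\mathbf{Z}^n}](z)=\left(\frac{1+z}{1-z}\right)^n,
\]
which is manifestly rational (alternatively, $\mathbf{Z}^n$ is automatic, so rationality of $\mathcal{F}[\mathcal{N}_{\mathbf{Z}^n}]=\mathcal{F}[\mathcal{C}\mathcal{C}_{\mathbf{Z}^n}]$ follows from Fact 1). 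Combining the two cases via Theorem \ref{corfin} gives the corollary. There is no genuinely hard step here; the only points needing a moment's care are the identification $\mathcal{C}\mathcal{C}=\mathcal{N}$ in the abelian case and the compatibility of generating sets required to apply Observation \ref{prodthm} and Theorem \ref{corfin}.
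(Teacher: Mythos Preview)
Your proof is correct and follows exactly the approach implicit in the paper: the corollary is stated immediately after Theorem~\ref{corfin} with no further argument, the intended reasoning being precisely to observe that $\mathcal{F}[\mathcal{C}\mathcal{C}_{\mathbf{Z}^n}]$ and $\mathcal{F}[\mathcal{C}\mathcal{C}_{G_2}]$ are each rational and then apply that theorem. Your added details (the polynomial observation for finite $G_2$ and the explicit computation $\mathcal{F}[\mathcal{C}\mathcal{C}_{\mathbf{Z}^n}](z)=((1+z)/(1-z))^n$ via the abelian identification $\mathcal{C}\mathcal{C}=\mathcal{N}$) simply spell out what the paper leaves to the reader.
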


\medskip\noindent
{\bf Remark.} It is not clear whether $\mathcal{ F}[\mathcal{ C}\mathcal{ C}_{G}]$
is rational when $G$ is a Bieberbach group -- most likely this depends
on the choice of the generating set, as conjectured by
D.~B.~A.~Epstein.

\begin{corollary}
\label{cor3}
If $G_1 = F_k$ and $G_2$ is a direct product of finite groups and
infinite cyclic groups, then $\mathcal{
F}[\mathcal{ C}\mathcal{ C}_{G_1 \times G_2}]$ is irrational.
\end{corollary}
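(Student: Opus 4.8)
The plan is to assemble Corollary \ref{cor3} from three ingredients already in hand: the irrationality of $\mathcal{F}[\mathcal{C}\mathcal{C}_{F_k}]$ (Theorem \ref{finthm}), the rationality of $\mathcal{F}[\mathcal{C}\mathcal{C}_{\mathbf{Z}^n\times(\text{finite})}]$ (Corollary \ref{cor2}), and the multiplicativity of $\mathcal{F}[\mathcal{C}\mathcal{C}_{\cdot}]$ under direct products (Observation \ref{prodthm}), combined via Theorem \ref{corfin}. First I would normalize the shape of $G_2$: a finite direct product of finite groups is a finite group $H$, and a finite direct product of infinite cyclic groups is some $\mathbf{Z}^m$, so $G_2\cong H\times\mathbf{Z}^m$. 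Then Corollary \ref{cor2}, applied with $\mathbf{Z}^m$ in the role of its $G_1$ and $H$ in the role of its $G_2$ (when $m=0$ the function $\mathcal{F}[\mathcal{C}\mathcal{C}_H]$ is simply a polynomial, so still rational), shows that $\mathcal{F}[\mathcal{C}\mathcal{C}_{G_2}]$ is rational.

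Now invoke Observation \ref{prodthm} to write
\[
\mathcal{F}[\mathcal{C}\mathcal{C}_{F_k\times G_2}](z) = \mathcal{F}[\mathcal{C}\mathcal{C}_{F_k}](z)\,\mathcal{F}[\mathcal{C}\mathcal{C}_{G_2}](z).
\]
By Theorem \ref{finthm} the first factor is not rational (this uses $k>1$, which is implicit in the statement — for $k=1$ one has $F_1=\mathbf{Z}$, virtually cyclic, with $\mathcal{F}[\mathcal{C}\mathcal{C}]$ rational, so the restriction $k>1$ is necessary), while the second factor is rational and, having constant term $\mathcal{C}\mathcal{C}_{G_2}(0)=1$ (the conjugacy class of the identity), is not the zero function. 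Hence Theorem \ref{corfin}, applied with $G_2$ in the role of its (rational) first group and $F_k$ in the role of its (irrational) second group, yields that $\mathcal{F}[\mathcal{C}\mathcal{C}_{F_k\times G_2}]$ is not rational; equivalently, if it were, then $\mathcal{F}[\mathcal{C}\mathcal{C}_{F_k}]=\mathcal{F}[\mathcal{C}\mathcal{C}_{F_k\times G_2}]/\mathcal{F}[\mathcal{C}\mathcal{C}_{G_2}]$ would be a quotient of rational functions, hence rational, contradicting Theorem \ref{finthm}.

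I do not expect any genuine obstacle: the corollary is a bookkeeping consequence of the results proved earlier. The only points deserving a sentence each are (i) reorganizing the given direct product so that the finite and free-abelian pieces are grouped together, so Corollary \ref{cor2} applies verbatim, and (ii) checking that the rational factor $\mathcal{F}[\mathcal{C}\mathcal{C}_{G_2}]$ is nonzero so that dividing through by it is legitimate; both are immediate. It is worth recording the slightly more general principle that the argument establishes: $\mathcal{F}[\mathcal{C}\mathcal{C}_{G_1\times G_2}]$ is irrational whenever $\mathcal{F}[\mathcal{C}\mathcal{C}_{G_1}]$ is irrational and $\mathcal{F}[\mathcal{C}\mathcal{C}_{G_2}]$ is rational.
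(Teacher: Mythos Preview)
Your proposal is correct and follows exactly the line the paper intends: Corollary \ref{cor3} is stated without proof because it is an immediate consequence of Theorem \ref{corfin}, with the irrationality of $\mathcal{F}[\mathcal{C}\mathcal{C}_{F_k}]$ supplied by Theorem \ref{finthm} and the rationality of $\mathcal{F}[\mathcal{C}\mathcal{C}_{G_2}]$ by Corollary \ref{cor2}. Your added remarks---grouping the finite and free-abelian factors of $G_2$, noting the implicit hypothesis $k>1$, and observing that $\mathcal{F}[\mathcal{C}\mathcal{C}_{G_2}]$ has constant term $1$ so is nonzero---are all sound and make explicit points the paper leaves tacit.
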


\begin{theorem}
\label{thm4}
If $G = F_{k_1} \times F_{k_2} \times \ldots \times F_{k_n}$, then 
$\mathcal{ F}[\mathcal{ C}\mathcal{ C}_G]$ is irrational (with respect to the
``obvious'' generating set).
\end{theorem}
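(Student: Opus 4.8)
The plan is to reduce the general product case to the single-factor case $G = F_k$ treated in Theorem \ref{finthm}, using the multiplicativity of the generating function established in Observation \ref{prodthm} together with the rationality dichotomy of Theorem \ref{corfin}. The key observation is that Theorem \ref{corfin} does \emph{not} immediately settle the case of a product of two \emph{irrational} factors: if both $\mathcal{F}[\mathcal{C}\mathcal{C}_{G_1}]$ and $\mathcal{F}[\mathcal{C}\mathcal{C}_{G_2}]$ are irrational, their product could in principle be rational (a priori poles could cancel). So the real content here is showing that this cancellation does not happen for $G = F_{k_1} \times \cdots \times F_{k_n}$ with each $k_i > 1$.

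The steps I would carry out, in order. First, recall from Theorem \ref{finthm} that for $F_k$ with $k > 1$, the function $\mathcal{H}_k(z) := \sum_r r\,\mathcal{C}\mathcal{C}_{F_k}(r) z^r$ has, for each $d \geq 1$, exactly $d$ simple poles on the circle $|z| = (2k-1)^{-1/d}$ (coming from the $\phi(d)(1/(1-(2k-1)z^d) - 1)$ term), and that $\mathcal{F}[\mathcal{C}\mathcal{C}_{F_k}] = \int_0^z \mathcal{H}_k(t)/t\,dt$. Second, I would locate the poles of $\mathcal{F}[\mathcal{C}\mathcal{C}_{F_k}]$ itself: integrating $\mathcal{H}_k(t)/t$ turns each simple pole of the integrand into a logarithmic branch point of $\mathcal{F}[\mathcal{C}\mathcal{C}_{F_k}]$ (the residues are nonzero since $\phi(d) \neq 0$), so $\mathcal{F}[\mathcal{C}\mathcal{C}_{F_k}]$ has infinitely many logarithmic singularities, in particular a branch point at $z = (2k_i - 1)^{-1}$ on the positive real axis for the smallest-$d$ term $d=1$, and branch points accumulating towards $|z| = 1$. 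A rational function has only finitely many singularities, all of them poles and none of them branch points; this is already the mechanism of Theorem \ref{finthm}. Third, for the product, write $\mathcal{F}[\mathcal{C}\mathcal{C}_G](z) = \prod_{i=1}^n \mathcal{F}[\mathcal{C}\mathcal{C}_{F_{k_i}}](z)$ by iterating Observation \ref{prodthm}. Fourth — the crucial step — I would argue that the branch points cannot cancel: a logarithmic branch point of one factor cannot be annihilated by multiplying by the other factors, because the other factors are analytic and nonzero in a punctured neighborhood of a generic such branch point (the branch points of distinct $F_{k_i}$, and of distinct $d$, lie on the distinct circles $|z| = (2k_i-1)^{-1/d}$, which for $k_i$ distinct are disjoint; and even when some $k_i$ coincide one can pick a $d$ for which the relevant circle is hit by only the chosen factor, or argue via the order of vanishing/blow-up of $\log$-type singularities, which is not an integer and so cannot be cured by multiplication by a meromorphic germ). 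Hence $\mathcal{F}[\mathcal{C}\mathcal{C}_G]$ has a genuine branch point, so it is not rational.

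The main obstacle I anticipate is making the non-cancellation argument in Step four fully rigorous when the $k_i$ are not all distinct, so that several factors share singular circles. The clean way around this is to choose, for each fixed $i$, a value of $d$ such that $(2k_i - 1)^{1/d}$ is separated from $(2k_j - 1)^{1/d'}$ for all the finitely many "competing" pairs $(j,d')$ up to a large bound — using that $(2k-1)^{1/d} \to 1$ and a transcendence/distinctness argument on these radical numbers — and then note that near that particular branch point of the $i$-th factor, all other factors are holomorphic and nonzero, so the local behavior is $\mathrm{(unit)} \cdot \log(z - z_0) + \mathrm{holomorphic}$, which is incompatible with rationality. Alternatively, and perhaps more simply, one can avoid circle-geometry entirely by working with the derivative: $z\,\mathcal{F}[\mathcal{C}\mathcal{C}_G]'/\mathcal{F}[\mathcal{C}\mathcal{C}_G] = \sum_i z\,\mathcal{F}[\mathcal{C}\mathcal{C}_{F_{k_i}}]'/\mathcal{F}[\mathcal{C}\mathcal{C}_{F_{k_i}}]$, and a rational function would force the left side to be rational with only finitely many poles, whereas each summand on the right has infinitely many poles (the branch points of the numerator and the zeros of the denominator) that cannot all cancel across the finite sum — reducing everything to a statement about a \emph{finite} sum of functions each with infinitely many poles accumulating at $|z|=1$, which cannot be rational. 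I would likely present the finite-sum-of-logarithmic-derivatives version, as it sidesteps the delicate separation-of-radii bookkeeping.
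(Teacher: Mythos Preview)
The paper's own proof is a single sentence: ``This is an immediate consequence of Theorem \ref{finthm}.'' You are right that this is not literally immediate from Observation \ref{prodthm} and Theorem \ref{corfin}, since those say nothing about a product of several \emph{irrational} factors; your plan to track the logarithmic branch points of each $\mathcal{F}[\mathcal{C}\mathcal{C}_{F_{k_i}}]$ through the product is exactly the honest way to supply what the paper leaves to the reader. In that sense your route and the paper's are the same --- you are just writing out the suppressed step.

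Two remarks on the execution. First, your parenthetical claim that for distinct $k_i$ the families of circles $|z|=(2k_i-1)^{-1/d}$ are disjoint is wrong: for instance $k=2$ at $d=1$ and $k=5$ at $d=2$ both give the circle $|z|=1/3$, since $9^{1/2}=3$; so the separation-of-radii bookkeeping you flag as the main obstacle is genuinely delicate and your first line of attack on it does not work as stated. Second, and more importantly, that bookkeeping is unnecessary. Logarithmic branch points are stable under finite products: if in a punctured neighbourhood of $z_0$ each factor is either holomorphic or of the form $c_j\log(z-z_0)+h_j(z)$ with $c_j\neq 0$, the product is a nonconstant polynomial in $\log(z-z_0)$ with holomorphic coefficients and nonzero top coefficient, hence still multivalued around $z_0$. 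So it suffices to exhibit a \emph{single} branch point of the product. Take $K=\max_i k_i$ (assumed $>1$) and $z_0=(2K-1)^{-1}$. Every factor with $k_i=K$ has a log branch point at $z_0$ (from the $d=1$ pole of $\mathcal{H}_K$), while every factor with $k_i<K$ has all its singularities on circles of radius at least $(2k_i-1)^{-1}>z_0$ and is therefore holomorphic --- and, having nonnegative coefficients and positive constant term, strictly positive --- at $z_0$. Hence the product has a genuine branch point at $z_0$, which already rules out rationality; there is no need to juggle infinitely many circles or to pass to logarithmic derivatives.
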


\begin{proof} This is an immediate consequence of Theorem \ref{finthm}.
\end{proof}

\section{Primitive conjugacy class zeta function}
\label{ihara}
One can compute a zeta-function analogous to that of
Ihara for the numbers of {\em primitive} conjugacy classes of a given
length (a primitive class is one which is not the power of a smaller
class), using, essentially, the elementary method described by Stark
and Terras, \cite{staterI}, as applied to the graph constructed in Section
\ref{modsec}. This function turns out to be rational (in fact, there
is a simple formula for it, see Theorem \ref{zetadet}). More precisely,
consider 
\begin{equation}
\label{zetadef}
\zeta(G)^{-1}=\prod_{[c]} (1+u^l(c)),
\end{equation}
where $[c]$ denotes the equivalences classes of primitive cycles,
where two cycles are considered equivalent if one can be obtained from
the other by a rotation.

A computation then shows that 
\begin{equation}
\label{zetaform}
\zeta(F_r) = (1-u^2)^{r-1}(1-u)(1-(2r-1)u).
\end{equation}

The computation goes as follows:

First, note that 
\[
\log \zeta(G) = \sum_{[c]} \sum_{i=1}^\infty \frac{1}{i} u^{i l(c)},
\]
and thus
\[
u \frac{d \log \zeta(G)}{d u} = \sum_{[c]} \sum_{i=1}^\infty l(c) u^{i
l(c)}.
\]
The above can be rewritten (note that the sum is now over primtive
cycles, and not equivalence classes thereof):
\[
u \frac{d \log \zeta(G)}{d u} = \sum_{c} \sum_{i=1}^\infty u^{i l(c)}.
\]
But note that the right hand side is simply the ordinary  generating function
for {\em all} cycles:
\begin{equation}
\label{cycleeq}
u \frac{d \log \zeta(G)}{d u} = \sum_{i=1}^\infty N_i u^i,
\end{equation}
where $N_i$ is the number of cycles of length $i$ in $G$, and this
generating function was computed in Section \ref{modsec}:
\[
\sum_{i=i}^\infty N_i u^i = \frac{1}{1+(2r-1)u} + \frac{r}{1-u} +
\frac{r-1}{1+u}.
\]
The formula \ref{zetaform} now follows by a straightforward
integration. 

An quick examination of the above argument shows that the formula
\ref{zetaform} is a special case of the following result:

\begin{theorem}
\label{zetadet}
Let $G$ be a finite graph, and let $\zeta_G$ be the zeta function
defined by formula \ref{zetadef}. Let $A(G)$ be the adjacency matrix
of $G$. Then 
\begin{equation}
\zeta_G(u) = \det \left(I-u A(G)\right).
\end{equation}
In other words, the zeta function is essentially the characteristic
polynomial of $A(G)$.
\end{theorem}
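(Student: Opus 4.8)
The plan is to run, in general, the computation carried out above for $F_r$: take the logarithm of $\zeta_G$, expand each factor as a Mercator series, reorganize the resulting double sum into a sum over based closed walks, and recognize the outcome as $\log\det(I-uA(G))$. Everything takes place in the ring of formal power series in $u$, or equivalently for $|u|$ small, where both sides are analytic (the product defining $\zeta_G$ converges near $u=0$, and $\det(I-uA(G))$ is a polynomial with value $1$ at $u=0$).

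First I would expand the definition \eqref{zetadef}, grouping the primitive classes by length: writing $p_\ell$ for the number of rotation classes of primitive cycles of length $\ell$, this gives $\log\zeta_G(u) = -\sum_{\ell\ge1} p_\ell \sum_{i\ge1}\tfrac1i u^{i\ell}$. The combinatorial heart of the argument is the translation between primitive rotation classes and based closed walks. A primitive cycle of length $\ell$ is aperiodic, so its rotation orbit has exactly $\ell$ elements; hence $\ell\,p_\ell = P_\ell$, the number of based primitive closed walks of length $\ell$. Moreover every based closed walk of length $n$ is the $(n/\ell)$-th power of a unique based primitive closed walk of length $\ell\mid n$, whence $\sum_{\ell\mid n} P_\ell = N_n$, the total number of closed walks of length $n$ in $G$. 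Substituting $n=i\ell$ and using these two facts collapses the double sum to $\log\zeta_G(u) = -\sum_{n\ge1}\tfrac1n N_n u^n$.

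To finish, recall from Section \ref{modsec} that $N_n = \tr A(G)^n$, so that $\log\zeta_G(u) = -\sum_{n\ge1}\tfrac1n \tr A(G)^n\,u^n = \tr\bigl(\log(I-uA(G))\bigr) = \log\det(I-uA(G))$, the first equality being the Mercator series applied to each eigenvalue of $A(G)$ and the second being $\log\det=\tr\log$. Exponentiating gives $\zeta_G(u)=\det(I-uA(G))$; specializing $A(G)=\mathcal{A}_r$ with spectrum $(2r-1,1^r,(-1)^{r-1})$ recovers formula \eqref{zetaform}. I expect the only delicate point to be the bookkeeping of the middle paragraph: one must check that primitivity is exactly what forces each length-$\ell$ rotation class to split into $\ell$ distinct based walks, and that the exponent $i$ on a power $c^i$ matches the divisor $n/\ell$ under the reindexing. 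Once that is pinned down, the remainder is the one-line logarithm-and-determinant identity already used for $F_r$.
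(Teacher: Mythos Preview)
Your proposal is correct and follows essentially the same route as the paper: both expand $\log\zeta_G$ via the Mercator series, reorganize the double sum over primitive classes and their powers into the single sum $\sum_n N_n u^n/n$ (using that each primitive class of length $\ell$ has exactly $\ell$ based representatives and that every based closed walk is a unique power of a primitive one), and then identify the result with $\log\det(I-uA(G))$. The only cosmetic difference is that the paper first applies $u\,d/du$ to clear the $1/i$ factors and afterwards integrates back via the resolvent identity $\tr\bigl(A(I-uA)^{-1}\bigr)=-\tfrac{d}{du}\log\det(I-uA)$, whereas you bypass this round trip by invoking $\tr\log=\log\det$ directly.
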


\begin{proof}
The argument above up to Equation \eqref{cycleeq} is completely general.
On the other hand, the right hand side of Equation \eqref{cycleeq} can
be rewritten as:
\begin{align*}
\sum_{i=1}^\infty N_i u^i & = \sum_{i=1}^\infty \tr A(G)^i u^i \\
                          & = \tr \left[-I + \sum_{i=0}^{\infty}
                          \left[A(G)u\right]^i\right] \\
                          & = \tr \left[-I + (I - u A(G))^{-1}\right] \\
                          & = \tr \left(u A(G)(I-u A(G))^{-1}\right). 
\end{align*}
Thus,
\[
\frac{d \log \zeta(G)}{d u} = \tr \left(A(G)(I-uA(G))^{-1}\right),
\]
and so it follows that
\begin{equation*}
\zeta(G) = C \det(I-u A(G)), 
\end{equation*}
where $C$ is a constant of integration, seen to be equal to $1$ by
computing both sides at $u=0.$
\end{proof}

\subsection*{Acknowledgements}
I would like to thank those who had comments on 
earlier versions of this paper (released as an IHES preprint in
September 1997). The irrationality of the growth function of the number
of conjugacy classes was independently shown by D.~B.~A.~Epstein and
Murray Macbeath. I would like to thank the anonymous referees for their comments on the current version.

\bibliographystyle{plain}
\bibliography{maslen,rivin1}
\end{document}